\theoremstyle{plain}
\newtheorem{thm}{Theorem}
\newtheorem{prop}[thm]{Proposition}
\newtheorem{theorem}{Theorem}[section]
\newtheorem{lemma}[theorem]{Lemma}
\newtheorem{lemma-def}[theorem]{Lemma-Definition}
\newtheorem{proposition}[theorem]{Proposition}
\newtheorem{prop-def}[theorem]{Proposition-Definition}
\newtheorem{corollary}[theorem]{Corollary}
\newtheorem{conjecture}[theorem]{Conjecture}
\theoremstyle{definition}
\newtheorem{definition}[theorem]{Definition}
\newtheorem{notation}[theorem]{Notation}
\newtheorem{remark}[theorem]{Remark}
\theoremstyle{remark}
\newtheorem*{ack}{Acknowledgement}
\numberwithin{equation}{section}
\def\Var{\mathrm{Var}}
\def\sp{\mathrm{sp}}
\def\pr{\mathrm{pr}}
\def\sing{\mathrm{sing}}
\def\ord{\mathrm{ord}}
\def\Jac{\mathrm{Jac}}
\def\Spec{\mathrm{Spec}}
\def\Proj{\mathrm{Proj}}
\def\Spf{\mathrm{Spf}}
\def\Spm{\mathrm{Spm}}
\def\rig{\mathrm{rig}}
\def\Id{\mathrm{Id}}
\def\Gr{\mathrm{Gr}}
\def\sr{\mathrm{sr}}
\def\Hom{\mathrm{Hom}}
\def\MV{\mathrm{MV}}
\def\flat{\mathrm{flat}}
\def\red{\mathrm{red}}
\def\x{\mathbf{x}}
\def\L{\mathbb{L}}
\title{Equivariant motivic integration on special formal schemes}  % Declares the document's title.
\author{L\^e Quy Thuong}
\date{}
\address{University of Science, Vietnam National University, Hanoi 
	\newline
	\indent 334 Nguyen Trai Street, Thanh Xuan District, Hanoi, Vietnam}
\email{leqthuong@gmail.com}
\author{Nguyen Hong Duc}
\address{$^{\dag}$TIMAS, Thang Long University, \newline \indent Nghiem Xuan Yem, Hanoi, Vietnam.} 
\email{duc.nh@thanglong.edu.vn}
\thanks{This research is funded by Vietnam National Foundation for Science and Technology Development (NAFOSTED) under grant number 101.04-2024.09.}
\keywords{Equivariant motivic integration, special formal schemes, stft formal schemes, rigid varieties, gauge forms, motivic Poincar\'e series, motivic volume, motivic Milnor fibers, resolution of singularity}
\subjclass[2010]{Primary 14B05, 14E18, 14G22, 14L15, 14L30} 
\begin{document}           % End of preamble and beginning of text.

\begin{abstract}
In this article, we construct an equivariant version of motivic integration on special formal schemes that generalizes our previous work for algebraic varieties. Pointing out the existence of an equivariant N\'eron smoothening for a flat generically smooth special formal scheme, we prove a change of variables formula in this integration. Finally, the article introduces the motivic Milnor fiber of a formal power series. It predicts that this quantity is the right one to define the motivic Milnor fiber of a germ of complex analytic functions.

%Dans cet article, nous construisons une version \'equivariante de l'int\'egration motivique au cadre des sch\'emas formels sp\'eciaux qui g\'en\'eralise notre travail ant\'erieur pour des vari\'et\'es alg\'ebriques. En soulignant l'existence d'un lissage de N\'eron \'equivariant pour un sch\'ema formel sp\'ecial plat g\'en\'eriquement lisse, nous prouvons une formule de changement de variables dans cette int\'egration. Enfin, cet article introduit la fibre de Milnor motivique d'une s\'erie formelle enti\`ere et pr\'edit qu'elle est la bonne quantit\'e pour d\'efinir la fibre de Milnor motivique d'un germe de fonctions analytiques complexes.

\end{abstract}

\maketitle  
               % Produces the title.

\section{Introduction}\label{sec1}
In 1995, with the help of $p$-adic integration and the Weil conjectures (proved by Deligne), Batyrev \cite{Ba} obtained an important result in birational geometry and mathematical physics that birational Calabi-Yau varieties have the same Betti numbers. Immediately after this event, Kontsevich in his seminar talk at Orsay on December 7, 1995 introduced a new idea which approaches directly to Batyrev's Theorem without using $p$-adic integration. Kontsevich's method involves arc spaces and the Grothendieck ring of varieties, which brings about the birth of geometric motivic integration. Nowadays, this kind of integration becomes one of the common central objects of algebraic geometry, singularity theory, mathematical physics. From algebraic varieties to formal schemes, the development of geometric motivic integration is contributed crucially by Denef-Loeser \cite{DL1, DL2}, Sebag \cite{Se}, Loeser-Sebag \cite{LS}, Nicaise-Sebag \cite{NS1, NS, NS2, NS3}, Nicaise \cite{Ni2}, and many others. Another point of view on motivic integration known as arithmetic motivic integration was also strongly developed due to the approaches of Denef-Loeser over $p$-adic fields \cite{DL3}, Cluckers-Loeser \cite{CL05, CL08, CL10}, Hrushovski-Kazhdan \cite{HK} and Hrushovski-Loeser \cite{HL16} using model theory with respect to different languages. It is shown in \cite{CHL} that arithmetic motivic integration has an important application to the fundamental lemma. For another theory of motivic integration that specializes to both of arithmetic and geometric points of view, we can also refer to more recent works such as \cite{GY}, \cite{CGH}, \cite{CR}. 

It is natural to build an equivariant version of geometric motivic integration, which is very useful for applications to singularity theory. In fact, we can view the monodromy action on the classical Milnor fiber from natural actions of the group schemes of roots of unity on the contact loci of the singularity. The previous work \cite{LN} developed the equivariant motivic integration in the inheritance of Denef-Loeser's classical motivic integration for stable semi-algebraic subsets of arc spaces of algebraic varieties, in which we work with good actions of finite and profinite group schemes. In a formal setting, Hartmann \cite{Hartmann} recently has extended the theory of Sebag and Loeser in \cite{Se} and \cite{LS} to an equivariant version with respect to abelian finite groups (other than group schemes). 

Let $R$ be a complete discrete valuation ring whose fraction field $K$ and residue field $k$ have the same characteristic. Fix a uniformizing parameter $\varpi\in R$, denote $R_n=R/(\varpi^{n+1})$ and $R(n)=R[\tau]/(\tau^n-\varpi)$. The present work is a continuation of \cite{LN}, which aims to reach a general theory of equivariant geometric motivic integration for special formal schemes endowed with good adic actions of finite and profinite group $k$-schemes. Important tools for our construction are Theorems \ref{fund2} and \ref{keycoro}, which extend \cite[Thm. 2.2 and 2.3]{LN} to arbitrary ground fields in the finite group scheme setting. Namely, Theorems \ref{fund2} and \ref{keycoro} provide practical criteria for the equality in the equivariant Grothendieck ring $\mathscr M_k^G$ of $k$-varieties endowed with good action of a finite group $k$-scheme $G$.  

We start with formal $R$-schemes topologically of finite type $\mathfrak X$ (stft formal schemes, for short) which is endowed with a good action of a finite group $k$-scheme $G$. Such a formal scheme $\mathfrak X$ corresponds to an inductive system of $R_n$-schemes $X_n$ with induced $G$-action. In Proposition \ref{action-on-Greenberg}, we show that the Greenberg functor brings the $G$-action on $\mathfrak X$ to a good $G$-action on each $k$-variety $\Gr_n(X_n)$ (resp. on $\Gr(\mathfrak X)$) such that the construction is functorial. The combination of Theorem \ref{keycoro} and Proposition \ref{action-on-Greenberg} gives rise to a $G$-equivariant motivic measure $\mu_{\mathfrak X}^G$ with value in $\mathscr M_{\mathfrak X_s}^G$ on $G$-invariant stable cylinders of $\Gr(\mathfrak X)$ (cf. Proposition \ref{measure}). This fact allows us to define the motivic $G$-integral $\int_{\mathscr A}\L^{-\alpha}d\mu_{\mathfrak X}^G\in \mathscr M_{\mathfrak X_s}^G$ of a motivic function $\L^{-\alpha}$ on such a cylinder $\mathscr A$, where $\L$ is the class of $\mathbb A_k^1\times_k\mathfrak X_s\to \mathfrak X_s$ in $\mathscr M_{\mathfrak X_s}^G$ (cf. Section \ref{Sec2.1}) and $\alpha: \mathscr A\to \mathbb Z \cup \{\infty\}$ is a naively exponentially $G$-integrable function (cf. Definition \ref{def-stft}). We revisit the change of variables formula of Sebag \cite[Thm. 8.0.5]{Se} in the $G$-equivariant version, see Theorem \ref{change-of-variable}, in which the identity of integrals holds in $\mathscr M_{\mathfrak X_s}^G$.
%\begin{thm}[Theorem \ref{change-of-variable}]
%Let $\mathfrak X$ and $\mathfrak Y$ be quasi-compact flat stft formal $R$-schemes endowed with good $G$-actions, purely of the same relative dimension. Assume that $\mathfrak X$ is generically smooth and $\mathfrak Y$ is smooth over $R$. Let $\mathfrak h\colon \mathfrak Y \to \mathfrak X$ be a $G$-equivariant morphism of formal $R$-schemes such that $\mathfrak h_{\eta}$ is \'etale and $\mathfrak Y_{\eta}(K^{sh})=\mathfrak X_{\eta}(K^{sh})$. Then, for any naively exponentially $G$-integrable function $\alpha$ on $\Gr(\mathfrak X)\setminus \Gr(\mathfrak X_{\sing})$, so is $\alpha\circ \Gr(\mathfrak h)+\ord_{\varpi}(\det\Jac_{\mathfrak h})$ on $\Gr(\mathfrak Y)$, and moreover, the following holds in $\mathscr M_{\mathfrak X_s}^G$:
%$$\int_{\Gr(\mathfrak X)\setminus \Gr(\mathfrak X_{\sing})}\L^{-\alpha}d\mu_{\mathfrak X}^G=(\mathfrak h_s)_!\int_{\Gr(\mathfrak Y)}\L^{-\alpha\circ \Gr(\mathfrak h)-\ord_{\varpi}(\Jac_{\mathfrak h})}d\mu_{\mathfrak Y}^G.$$
%\end{thm} 
Furthermore, if $\mathfrak X$ is flat stft generically smooth, then for any gauge form $\omega$ on the generic fiber $\mathfrak X_{\eta}$ of $\mathfrak X$, we have a $\mathbb Z$-value function $\ord_{\varpi,\mathfrak X}(\omega)$ on $\Gr(\mathfrak X)\setminus \Gr(\mathfrak X_{\sing})$, which is naively exponentially $G$-integrable due to Lemma \ref{orderomega}. In this case, we denote by $\int_{\mathfrak X}|\omega|$ the motivic $G$-integral of $\L^{-\ord_{\varpi, \mathfrak X}(\omega)}$ on $\Gr(\mathfrak X)\setminus \Gr(\mathfrak X_{\sing})$, and also put $\int_{\mathfrak X_{\eta}}|\omega|:=\int_{\mathfrak X_s}\int_{\mathfrak X}|\omega|$ in $\mathscr M_k^G$. %This is the starting point in our study on motivic $G$-integrals of gauge forms on (the generic fiber of) special formal schemes, i.e., on a natural extension of Nicaise's theory in \cite{Ni2} to a $G$-equivariant version.

The major purpose of the present article is to develop a theory of $G$-equivariant motivic integration of gauge forms on {\it special formal $R$-schemes}, which is an equivariant version of Nicaise's integration \cite{Ni2}, also a natural upgrade of our previous work \cite{LN}. It is important to notice that we shall only consider {\it adic} $G$-actions on special formal $R$-schemes, and only consider $G$-equivariant {\it adic} morphisms of special formal $R$-schemes, which guarantee the existence of induced $G$-equivariant $k$-morphisms at the reduction level. Remark that every morphism of stft formal $R$-schemes is automatically an adic morphism. To construct motivic integration on a special formal scheme Nicaise \cite{Ni2} uses its dilatation that always exists with respect to a coherent ideal sheaf containing $\varpi$. For any flat special formal $R$-scheme endowed with a good adic $G$-action, we prove in Proposition \ref{G-dilatation} that under certain conditions on a coherent ideal sheaf $\mathcal I$ on a special formal scheme $\mathfrak X$, it admits a so-called $G$-dilatation $\pi: \mathfrak U\to \mathfrak X$. When $\mathcal I$ defines $\mathfrak X_0$, $\mathfrak U$ is a flat stft formal $R$-scheme, and for any gauge form $\omega$ on $\mathfrak X_{\eta}$, the differential form $\pi_{\eta}^*\omega$ is also a gauge form on $\mathfrak U_{\eta}$. It is natural to define the motivic $G$-integrals of $\omega$ on $\mathfrak X$ and on $\mathfrak X_{\eta}$ as follows 
\begin{align*}
\int_{\mathfrak X}|\omega|:= {\pi_s}_!\int_{\mathfrak U}|\pi_{\eta}^*\omega|\in \mathscr M_{\mathfrak X_0}^G\ \ \text{and}\  \int_{\mathfrak X_{\eta}}|\omega|:=\int_{\mathfrak U_{\eta}}|\omega|\in \mathscr M_k^G.
\end{align*}
The following proposition is the most important technical result, which asserts the existence of a $G$-N\'eron smoothening for a special formal scheme.

\begin{prop}[Proposition \ref{G-smoothening}]
Let $G$ be a smooth finite group $k$-scheme. Then every flat generically smooth special formal $R$-scheme $\mathfrak X$ endowed with a good adic $G$-action admits a $G$-N\'eron smoothening $\mathfrak h\colon \mathfrak Y \to\mathfrak X$.
\end{prop}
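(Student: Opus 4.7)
The strategy is to reduce the statement to the case of stft formal schemes via an equivariant dilatation, and then to run a $G$-equivariant version of Sebag's Néron smoothening algorithm.

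First, let $\mathcal I$ be the coherent ideal sheaf on $\mathfrak X$ defining the closed subscheme $\mathfrak X_0$. Since the $G$-action on $\mathfrak X$ is adic and good, $\mathfrak X_0$ is $G$-stable and $\mathcal I$ is $G$-invariant. By Proposition \ref{G-dilatation}, there exists a $G$-equivariant dilatation $\pi\colon \mathfrak U \to \mathfrak X$ along $\mathcal I$, where $\mathfrak U$ is a flat stft formal $R$-scheme endowed with a good adic $G$-action, and $\pi_\eta$ is an isomorphism (in particular a bijection on $K^{sh}$-points). Because $\mathfrak X$ is generically smooth, so is $\mathfrak U$. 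Hence it suffices to construct a $G$-Néron smoothening $\mathfrak Y \to \mathfrak U$ in the stft setting; composing with $\pi$ then produces the required $G$-equivariant $\mathfrak h\colon \mathfrak Y \to \mathfrak X$.

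Second, I would follow Sebag's iterated admissible-blowup construction, checking $G$-equivariance at each step. The singular locus $\mathfrak U_{\sing}\subset \mathfrak U_s$ is a closed subscheme that is $G$-stable, because the good action preserves smoothness. I would form the admissible formal blowup of $\mathfrak U$ along the $G$-invariant ideal sheaf of $\mathfrak U_{\sing}$, then take its $\varpi$-flat part (equivalently, a suitable dilatation): blowing up along a $G$-invariant coherent ideal sheaf produces a formal scheme carrying a canonical $G$-action, and passing to the $\varpi$-flat part preserves the goodness and adicity of the action. Iterating this construction, Sebag's termination argument, which relies on a dimension/multiplicity invariant that is insensitive to the $G$-structure, yields after finitely many steps a smooth stft formal $R$-scheme $\mathfrak Y$ with a $G$-equivariant morphism $\mathfrak Y \to \mathfrak U$ that is an isomorphism on generic fibers and induces a bijection $\mathfrak Y_\eta(K^{sh}) = \mathfrak U_\eta(K^{sh})$.

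The main technical obstacle is to verify that every intermediate formal scheme in the iteration inherits a good adic $G$-action and that the connecting morphisms are $G$-equivariant. This is where the smoothness hypothesis on $G$ enters essentially: for a smooth finite group $k$-scheme, good actions are preserved under the scheme-theoretic operations used (admissible blowup along an invariant center, formal completion, passage to the $\varpi$-flat part), and in particular the induced action on the exceptional divisor remains good fiber-wise. Once this propagation of equivariance is established, the composition $\mathfrak h = \pi\circ(\mathfrak Y\to \mathfrak U)$ is a $G$-Néron smoothening of $\mathfrak X$ in the required sense.
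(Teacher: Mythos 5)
Your first step --- reducing to the stft case via the $G$-dilatation with center $\mathfrak X_0$ and invoking Proposition \ref{G-dilatation} --- matches the paper exactly, and your observation that the smoothness of $G$ forces the smooth locus to be $G$-invariant is also one of the paper's two preliminary observations. The gap is in the second step. You propose to iterate admissible blowups centered at the \emph{singular locus} $\mathfrak U_{\sing}$ and appeal to ``Sebag's termination argument.'' But the N\'eron smoothening algorithm of Bosch--L\"utkebohmert--Raynaud (and its formal version used by Sebag and by this paper, via \cite[3.4/2]{BLR90} and \cite{BS95}) does \emph{not} blow up the singular locus. Its centers are $E$-permissible: one takes $E=\mathfrak U(R^{sh})$, specializes it into $\mathfrak U_s$ via the map $(\cdot)_k$, and builds a decreasing stratification $Y_1\supseteq Y_2\supseteq\cdots$ by repeatedly removing the largest open subscheme of $\overline{F^i_k}$ that is $k$-smooth and over which $\Omega^1_{\mathfrak X/R}$ is locally free; the center of the first blowup is the last nonempty stratum $Y_\ell$. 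Termination comes from $\dim Y_{i+1}<\dim Y_i$, an argument that simply does not apply to iterated blowups of the full singular locus (which in general need neither terminate nor capture all $R^{sh}$-sections in the smooth locus of the result).

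Consequently you have bypassed the actual technical content of the proposition. Because the BLR centers are not canonical subschemes like $\mathfrak U_{\sing}$ but are manufactured from the set of sections $E$ and its specializations, their $G$-invariance is not automatic and must be proved. This is what the paper's proof spends most of its length on: it shows that the specialization map $(\cdot)_k\colon \mathfrak U(R^{sh})\to\mathfrak U_s$ is equivariant for the induced $G(R^{sh})$- and $G$-actions, that each $U_i$ is $G$-invariant (here the smoothness of $G$ enters, via the $G$-invariance of smooth loci), that each $E^i$ and $F^i$ is $G(R^{sh})$-invariant, and that Zariski closures of $G$-invariant sets of specializations remain $G$-invariant --- whence, by induction, the center $Y_\ell$ is $G$-invariant and the universal property of the blowup propagates the action. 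To repair your argument you would need to replace ``blow up $\mathfrak U_{\sing}$'' by the genuine $E$-permissible centers and then supply this equivariance argument for the stratification.
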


This basic tool together with Proposition \ref{dilatation_stft} guarantees  that the notion of $G$-equivariant motivic integration for special formal schemes is an obvious extension of the one for stft formal schemes. It also promotes its crucial effects on the main results of the present article. Indeed, by Proposition \ref{dilatation_stft}, we realize some basic properties of the $G$-equivariant motivic integration, such as the compatibility with formal completion (Proposition \ref{base_change}) and the additivity (Corollary \ref{int-additive}). Moreover, if $\omega$ is an $\mathfrak X$-{\it bounded} gauge form, we obtain in Proposition \ref{special-connected} a $G$-equivariant version of \cite[Prop. 5.14]{Ni2} on the expression of the motivic integral $\int_{\mathfrak X}|\omega|$ in terms of the connected components of $\mathfrak X_0$. The most significant result on the $G$-equivariant motivic integration for special formal schemes that we obtain, also under the support of Proposition \ref{G-smoothening}, is the special $G$-equivariant change of variables formula (see Theorem \ref {changeofvariables}).

\begin{thm}[Theorem \ref {changeofvariables}]
Let $G$ be a smooth finite group $k$-scheme. Let $\mathfrak X$ and $\mathfrak Y$ be generically smooth special formal $R$-schemes endowed with good adic actions of $G$, and let $\mathfrak h\colon \mathfrak Y \to\mathfrak X$ be an adic $G$-equivariant morphism of formal $R$-schemes such that  the induced morphism $\mathfrak Y_{\eta} \to \mathfrak X_{\eta}$ is an open embedding and $\mathfrak Y_{\eta}(K^{sh})=\mathfrak X_{\eta}(K^{sh})$. If $\omega$ is a gauge form on $\mathfrak X_{\eta}$, then 
$$\int_{\mathfrak X}|\omega|={\mathfrak h_0}_!\int_{\mathfrak Y}| \mathfrak h_{\eta}^*\omega|\quad \text{in} \ \ \mathscr M_{\mathfrak X_0}^G.$$
\end{thm}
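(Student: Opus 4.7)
The strategy is to use $G$-Néron smoothenings (Proposition \ref{G-smoothening}) to reduce the assertion to the stft change of variables formula (Theorem \ref{change-of-variable}), exploiting that the motivic integral on a special formal scheme is defined via pushforward from such a smoothening and that this definition is independent of the choice (Proposition \ref{dilatation_stft}).

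First, I would invoke Proposition \ref{G-smoothening} twice to choose $G$-Néron smoothenings $\pi_{\mathfrak X}\colon \mathfrak U \to \mathfrak X$ and $\pi_{\mathfrak Y}\colon \mathfrak V \to \mathfrak Y$ with $\mathfrak U$ and $\mathfrak V$ flat smooth quasi-compact stft formal $R$-schemes carrying good adic $G$-actions. By the definition of the motivic $G$-integral on special formal schemes and Proposition \ref{dilatation_stft}, one has
\begin{align*}
\int_{\mathfrak X}|\omega| &= (\pi_{\mathfrak X,0})_!\int_{\mathfrak U}|\pi_{\mathfrak X,\eta}^*\omega| \quad\text{in } \mathscr M_{\mathfrak X_0}^G,\\
\int_{\mathfrak Y}|\mathfrak h_\eta^*\omega| &= (\pi_{\mathfrak Y,0})_!\int_{\mathfrak V}|(\mathfrak h\circ\pi_{\mathfrak Y})_\eta^*\omega| \quad\text{in } \mathscr M_{\mathfrak Y_0}^G.
\end{align*}

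The central step is to produce an adic $G$-equivariant morphism $\tilde{\mathfrak h}\colon \mathfrak V \to \mathfrak U$ with $\pi_{\mathfrak X}\circ\tilde{\mathfrak h} = \mathfrak h\circ\pi_{\mathfrak Y}$ and with étale generic fiber satisfying $\mathfrak V_\eta(K^{sh}) = \mathfrak U_\eta(K^{sh})$. Since $\mathfrak h_\eta$ is an open embedding and $\pi_{\mathfrak Y,\eta}$ is étale, the composition $(\mathfrak h\circ\pi_{\mathfrak Y})_\eta$ is étale, and the $K^{sh}$-point equalities given in the hypotheses chain to give $\mathfrak V_\eta(K^{sh}) = \mathfrak Y_\eta(K^{sh}) = \mathfrak X_\eta(K^{sh}) = \mathfrak U_\eta(K^{sh})$. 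Now the Néron property of $\pi_{\mathfrak X}$ forces every morphism from a smooth stft formal $R$-scheme to $\mathfrak X$ that factors through $\mathfrak U_\eta$ on the generic fibre to factor through $\pi_{\mathfrak X}$ itself; applied to $\mathfrak h\circ\pi_{\mathfrak Y}$, this yields a morphism $\tilde{\mathfrak h}$ as a map of formal $R$-schemes, possibly after replacing $\mathfrak V$ by an admissible $G$-equivariant formal blow-up (which does not change the generic fiber, hence does not change the integral). The $G$-equivariance of $\tilde{\mathfrak h}$ is where the smoothness hypothesis on $G$ is used critically, via the uniqueness of the Néron lift: two lifts $\tilde{\mathfrak h}$ and $g\cdot\tilde{\mathfrak h}\cdot g^{-1}$ agree because they have the same generic fibre, forcing $G$-equivariance.

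With $\tilde{\mathfrak h}$ in hand, Theorem \ref{change-of-variable} applies to give
\[
\int_{\mathfrak U}|\pi_{\mathfrak X,\eta}^*\omega| = (\tilde{\mathfrak h}_s)_!\int_{\mathfrak V}|\tilde{\mathfrak h}_\eta^*\pi_{\mathfrak X,\eta}^*\omega| = (\tilde{\mathfrak h}_s)_!\int_{\mathfrak V}|(\mathfrak h\circ\pi_{\mathfrak Y})_\eta^*\omega|,
\]
since the Jacobian contribution vanishes as $\tilde{\mathfrak h}_\eta$ is étale. Pushing forward by $(\pi_{\mathfrak X,0})_!$, using functoriality $(\pi_{\mathfrak X,0})_!(\tilde{\mathfrak h}_s)_! = ((\pi_{\mathfrak X}\circ\tilde{\mathfrak h})_0)_! = ((\mathfrak h\circ\pi_{\mathfrak Y})_0)_! = (\mathfrak h_0)_!(\pi_{\mathfrak Y,0})_!$ of the equivariant pushforward on $G$-equivariant Grothendieck rings, I obtain the desired equality in $\mathscr M_{\mathfrak X_0}^G$.

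The hardest part is the construction and equivariance of $\tilde{\mathfrak h}$: showing that a Néron lift exists in the adic formal category, that it can be chosen $G$-equivariantly, and that the remaining non-equivariance is absorbed by an admissible $G$-blow-up compatible with $\pi_{\mathfrak Y}$. Verifying that this replacement does not alter either side of the asserted identity (thanks to Proposition \ref{dilatation_stft} and the independence of the motivic integral under such modifications) is the final technical bookkeeping step.
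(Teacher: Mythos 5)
Your overall strategy (reduce to the stft change of variables via N\'eron smoothenings) is sound, but the central step of your plan --- producing a $G$-equivariant lift $\tilde{\mathfrak h}\colon \mathfrak V\to\mathfrak U$ between two independently chosen smoothenings --- is a genuine gap, and it is also unnecessary. A $G$-N\'eron smoothening in this paper is a weak N\'eron model: its defining property is a bijection on $K^{sh}$-points, not a categorical universal property. So the assertion that ``the N\'eron property of $\pi_{\mathfrak X}$ forces every morphism from a smooth stft formal $R$-scheme that factors through $\mathfrak U_\eta$ to factor through $\pi_{\mathfrak X}$'' does not hold as stated; worse, $\mathfrak V_\eta$ need not even land inside the open subspace $\mathfrak U_\eta\subseteq\mathfrak X_\eta$ in the first place, and replacing $\mathfrak V$ by an admissible blow-up cannot repair this because blow-ups do not change the generic fiber. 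Likewise the uniqueness of lifts you invoke to get $G$-equivariance of $\tilde{\mathfrak h}$ has no basis for weak N\'eron models, which are very far from unique. Making this step rigorous would essentially amount to redoing the smoothening construction from scratch.

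The missing idea, which is how the paper proceeds, is that one smoothening suffices: take a $G$-N\'eron smoothening $\phi\colon\mathfrak Z\to\mathfrak Y$ of $\mathfrak Y$ (Proposition \ref{G-smoothening}); since $\mathfrak h_\eta$ is an open embedding and $\mathfrak Y_\eta(K^{sh})=\mathfrak X_\eta(K^{sh})$, the composite $\mathfrak h\circ\phi$ is automatically a $G$-N\'eron smoothening of $\mathfrak X$. Then Proposition \ref{Int_G-smoothening} --- which, incidentally, is the statement you actually need when you write $\int_{\mathfrak X}|\omega|=(\pi_{\mathfrak X,0})_!\int_{\mathfrak U}|\pi_{\mathfrak X,\eta}^*\omega|$ for an arbitrary smoothening rather than for the dilatation of Definition \ref{spint}; Proposition \ref{dilatation_stft} alone does not give this --- computes both $\int_{\mathfrak X}|\omega|$ and $\int_{\mathfrak Y}|\mathfrak h_\eta^*\omega|$ through the same $\mathfrak Z$. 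The identity then follows from functoriality of the pushforward, $(\mathfrak h\circ\phi)_s=\mathfrak h_s\circ\phi_s$, together with the observation that $\mathfrak h_0$ is the restriction of $\mathfrak h_s$ to $\mathfrak Y_0$ because $\mathfrak h$ is adic. No lifting between two separately chosen smoothenings is needed.
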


An important application of Theorem \ref {changeofvariables} and Proposition \ref{special-connected} is the proof of rationality of the (monodromic) motivic volume Poincar\'e series. Assume that $\mathfrak X$ is generically smooth. Then, for any $n\in \mathbb N^*$, $\mathfrak X(n):=\mathfrak X\times_RR(n)$ is a generically smooth special formal $R(n)$-scheme and it is naturally endowed with an adic good $\mu_n$-action (see Lemma \ref{mu_n-action}). Let $\omega(n)$ be the gauge form on $\mathfrak X(n)_{\eta}$ induced by $\omega$. We apply the $\mu_n$-equivariant motivic integration developed in this article to the formal schemes $\mathfrak X(n)$ and consider the motivic volume Poincar\'e series with respect to a gauge form $\omega$ on $\mathfrak X_{\eta}$:
$$
P(\mathfrak X,\omega; T):=\sum_{n\geq 1}\Big(\int_{\mathfrak X(n)}|\omega(n)|\Big)T^n\ \in \mathscr M_{\mathfrak X_0}^{\hat\mu}[[T]].
$$
Assume, in addition, that $\mathfrak X$ is flat and admits a tame resolution of singularities $\mathfrak h$, and that $\omega$ is $\mathfrak X$-bounded. Then we obtain in Theorem \ref{int_Xm} a description of $\int_{\mathfrak X(n)}|\omega(n)|$ in terms of $\mathfrak h$ provided $n$ is prime to the characteristic exponent of $k$; this result is an equivariant version of \cite[Thm. 7.12]{Ni2}. If $k$ is of characteristic zero, then $\mathfrak X$ has a (tame) resolution of singularities, therefore together with Theorem \ref{int_Xm} we prove the rationality of $P(\mathfrak X,\omega; T)$ (Corollary \ref{poincare}). Using the map $\lim_{T\to \infty}$ in \cite{DL1} we define $\MV(\mathfrak X;\widehat{K^s}):=-\lim_{T\to\infty}\L^{\dim_R\mathfrak X}P(\mathfrak X,\omega; T)\in \mathscr M_{\mathfrak X_0}^{\hat\mu}$ with $\dim_R\mathfrak X$ the relative dimension of $\mathfrak X$, which is called the motivic volume of $\mathfrak X$.

Let $f$ be a formal power series in $k\{x\}[[y]]$ such that the series $f(x,0)$ is non-constant, with $x=(x_1,\dots, x_m)$ and $y=(y_1,\dots,y_{m'})$. Denote by $\mathfrak X_f$ the formal completion of $\Spf(k\{x\}[[y]])$ along $(f)$, which is a special formal $R$-scheme of pure relative dimension $m+m'-1$ with the reduction $(\mathfrak X_f)_0=\Spec k[x]/(f(x,0))$. Denote by $\omega/df$ the Gelfand-Leray form associated to a top differential form $\omega$ on $\mathfrak X_f$ considered as a formal scheme over $k$ (we may take $\omega=dx_1\wedge\cdots\wedge dx_d$). Let $k$ be of characteristic zero. Using tame resolution of singularity we obtain the formulas for $\int_{\mathfrak X_f(n)}|(\omega/df)(n)|$ and $\MV(\mathfrak X_f;\widehat{K^s})$ (cf. Proposition \ref{mzf-fseries}), which extend Denef-Loeser's formulas for the contact locus $[\mathscr X_n(f)]$ and the motivic nearby cycles $\mathscr S_f$ of a polynomial $f$ (cf. Corollary \ref{comparezeta}). This result allows us to define the so-called motivic nearby cycles and motivic Milnor fiber of a series in $k\{x\}[[y]]$.

Finally, we borrow Section \ref{last-ss} of the present article to discuss an observation concerning motivic Milnor fiber of complex analytic function germs. The original approach of Denef-Loeser to motivic Milnor fiber (cf. \cite{DL1, DL5}) is only for regular function germs (i.e. polynomials) since their theory of motivic integration works only with algebraic varieties and usual schemes. Hence, it is surprising there were unproven declarations that the concept of motivic Milnor fiber for complex analytic function germs is only an obvious extension of that of Denef-Loeser. To provide a solution to this extension we recommend Conjectures \ref{conj1} and \ref{conj2}, which may be expected as a bridge connecting equivariant motivic integration for special formal schemes and singularity theory. 

\begin{ack}
This article has been written during many visits of the authors to Vietnam Institute for Advanced Studies in Mathematics between 2018 and 2025. The authors thank sincerely the institute for their hospitality and valuable supports. The first author would also like to acknowledge support from the ICTP through the Associates Programme (2020-2025). 
\end{ack}

%%%%%%%%%%%%%%%%%%%%%%%%%%%%%%%%%%%%%%%%%%%%%%%%%%%%%%%%%%%
%%%%%%%%%%%%%%%%%%%%%%%%%%%%%%%%%%%%%%%%%%%%%%%%%%%%%%%%%%%
%%%%%%%%%%%%%%%%%%%%%%%%%%%%%%%%%%%%%%%%%%%%%%%%%%%%%%%%%%%
%%%%%%%%%%%%%%%%%%%%%%%%%%%%%%%%%%%%%%%%%%%%%%%%%%%%%%%%%%%

\section{Preliminaries}\label{Prel}
\subsection{Equivariant Grothendieck rings of varieties} \label{Sec2.1}
Let $k$ be a field, and let $S$ be a $k$-variety. By an $S$-variety we mean a $k$-variety $X$ together with a morphism $X\to S$. As in \cite{DL1} and \cite{DL2}, let $\Var_S$ denote the category of $S$-varieties and $K_0(\Var_S)$ its Grothendieck ring. By definition, $K_0(\Var_S)$ is the quotient of the free abelian group generated by all $S$-isomorphism classes $[X\to S]$ in $\Var_S$ modulo the relations
$$[X\to S]=[X_{\red}\hookrightarrow X\to S]$$
and 
$$[X\to S]=[Y\to S]+[X\setminus Y\to S],$$
where $X_{\red}$ is the reduced subscheme of $X$, and $Y$ is a Zariski closed subvariety of $X$. Together with fiber product over $S$, $K_0(\Var_S)$ is a commutative ring with unity $1=[\Id\colon S\to S]$. Put $\L=[\mathbb A_k^1\times_k S\to S]$. Denote by $\mathscr M_S$ the localization of $K_0(\Var_S)$ which makes $\L$ invertible.

Let $X$ be a $k$-variety, and let $G$ be an algebraic group. An action of $G$ on $X$ is said to be {\it good} if every $G$-orbit is contained in an affine open subset of $X$. We fix a good action of $G$ on the $k$-variety $S$ (where we may choose the trivial action). By definition, the $G$-equivariant Grothendieck group $K_0^G(\Var_S)$ of $G$-equivariant morphisms of $k$-varieties $X\to S$, where $X$ is endowed with a good $G$-action, is the quotient of the free abelian group generated by the $G$-equivariant isomorphism classes $[X\to S,\sigma]$, where $X$ is a $k$-variety endowed with a good $G$-action $\sigma$ and $X\to S$ is a $G$-equivariant morphism of $k$-varieties, modulo the relations
$$[X\to S,\sigma]=[Y\to S,\sigma|_Y]+[X\setminus Y\to S,\sigma|_{X\setminus Y}]$$
for $Y$ being $\sigma$-invariant Zariski closed in $X$, and 
\begin{align*}
[X\times_k\mathbb A_k^n\to S,\sigma]=[X\times_k\mathbb A_k^n\to S,\sigma']
\end{align*}
if $\sigma$ and $\sigma'$ lift the same good $G$-action on $X$. As above, we have the commutative ring with unity structure on $K_0^G(\Var_S)$ by fiber product, where $G$-action on the fiber product is through the diagonal $G$-action. Denote by $\mathscr M_S^G$ the localization ring $K_0^G(\Var_S)[\L^{-1}]$, where we view $\L$ as the class of $\mathbb A_k^1\times_k S\to S$ endowed with the trivial action of $G$. 

Denote by $\hat\mu$ the limit of the projective system of group schemes $\mu_n=\Spec \left(k[\xi]/(\xi^n-1)\right)$ with transition morphisms $\mu_{mn}\to \mu_n$ sending $\lambda$ to $\lambda^m$. Define $K_0^{\hat\mu}(\Var_S)=\varinjlim K_0^{\mu_n}(\Var_S)$ and $\mathscr M_S^{\hat\mu}=K_0^{\hat\mu}(\Var_S)[\L^{-1}]$.  

Let $G$ be an algebraic group. Let $f\colon S\to S'$ be a $G$-equivariant morphism of $k$-varieties with a good $G$-action. Denote by $f^*$ the ring homomorphism $K_0^G(\Var_{S'})\to K_0^G(\Var_S)$ induced from the fiber product (the pullback morphism), and by $f_!$ the $K_0(\Var_k)$-linear homomorphism $K_0^G(\Var_{S})\to K_0^G(\Var_{S'})$ defined by the composition with $f$ (the push-forward morphism). The pullback morphism induces a unique morphism of localizations $f^*\colon \mathscr{M}_{S'}^G\to \mathscr{M}_S^G$, the push-forward morphism induces a unique $\mathscr M_k$-linear morphism $f_!\colon \mathscr{M}_S^G\to \mathscr{M}_{S'}^G$ by sending $a\L_S^{-n}$ to $(f_!a)\L_{S'}^{-n}$ for any $a$ in $K_0^G(\Var_{S})$ and any $n\in \mathbb N$. When $S'$ is $\Spec k$, we usually write $\int_S$ instead of $f_!$.

%Now, we mention the so-called modified Grothendieck ring of $S$-varieties. A morphism of schemes $X\to Y$ is said to be {\it universal homeomorphism} if for every morphism of schemes $Y'\to Y$, the base change morphism $X\times_YY' \to Y'$ is a homeomorphism. Two $S$-schemes $X$ and $Y$ are {\it universally $S$-homeomorphic} if there is a universal homeomorphism of $S$-schemes from $X$ to $Y$ or from $Y$ to $X$. Let $I_S^{uh}$ the ideal in $K_0(\Var_S)$ generated by elements of the form $[X\to S]-[Y\to S]$ with $X,Y$ being universally $S$-homeomorphic. The quotient ring $K_0\langle\Var_S\rangle:=K_0(\Var_S)/I_S^{uh}$ is called the {\it modified Grothendieck ring} of $S$-varieties. Define $\langle X\to S\rangle:=[X\to S]+I_S^{uh}$ and $\widetilde{\mathscr M}_S:=K_0\langle\Var_S\rangle[\widetilde{\L}^{-1}]$, where $\widetilde{\L}:=\langle \mathbb A_k^1\times_kS\to S\rangle$. We can define the $G$-equivariant version in an obvious way to get $K_0^G\langle\Var_S\rangle$, $\widetilde{\mathscr M}_S^G$, $K_0^{\hat\mu}\langle\Var_S\rangle$, $\widetilde{\mathscr M}_S^{\hat\mu}$, $f^*$, $f_!$, and so on. This definition of modified Grothendieck ring of varieties is only important in the case where the characteristic of $k$ is positive, because if the characteristic of $k$ is zero, then $I_S^{uh}=0$, thus $\widetilde{\L}=\L$, $K_0\langle\Var_S\rangle=K_0(\Var_S)$ and $\widetilde{\mathscr M}_S^G=\mathscr M_S^G$ (see \cite{NS4}).

\subsection{Equivariant piecewise trivial fibrations}
Let $X$, $Y$ and $F$ be algebraic $k$-varieties endowed with a good action of an algebraic group $G$. Let $A$ and $B$ be $G$-invariant constructible subsets of $X$ and $Y$, respectively. Let $f\colon X\to Y$ be a $G$-equivariant morphism such that $f(A)\subseteq B$. The restriction $f\colon A\to B$ is called a {\it $G$-equivariant piecewise trivial fibration with fiber $F$} if there exists a stratification of $B$ into finitely many $G$-invariant locally closed subsets $B_i$ such that $f^{-1}(B_i)$ is a $G$-invariant constructible subset of $A$ and $f^{-1}(B_i)$ is $G$-equivariant isomorphic to $B_i\times_k F$ with respect to the diagonal action of $G$ on $B_i\times_k F$, and such that, over $B_i$, $f$ equals the projection $B_i\times_k F\to B_i$. %Clearly, if there is such a map, one gets the identity $[A]=[B]\cdot [F]$ in $K_0^G(\Var_k)$. 

\medskip
For a morphism of algebraic $k$-varieties $X\to Y$ and an immersion $S\to Y$, we write $X_S$ for the fiber product $X\times_YS$. If $Y$ is endowed with a good $G$-action, then for $y$ in $Y$, the {\em stabilizer subgroup $G_y$} of $G$ over $y$ is the subgroup of elements in $G$ fixing $y$. 

\begin{theorem}\label{fund2} %{Lem1}
Let $G$ be a finite group $k$-scheme. Suppose that $X$ and $Y$ are algebraic $k$-varieties endowed with a good $G$-action and that $f\colon X\to Y$ is a $G$-equivariant morphism. Then $f$ is a $G$-equivariant piecewise trivial fibration if and only if there is an algebraic $k$-variety $F$ endowed with a good $G$-action such that for every $y$ in $Y$, there is a $G_y$-equivariant isomorphism of algebraic $\kappa(y)$-varieties $X_y \stackrel{\cong}{\longrightarrow} F\times_k \kappa(y)$.
\end{theorem}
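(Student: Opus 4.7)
The ``only if'' direction is immediate: given a $G$-invariant stratification $Y=\sqcup B_i$ for which $f^{-1}(B_i)\cong B_i\times_k F$ $G$-equivariantly, base-changing along the immersion $\Spec\kappa(y)\to B_i$ (with $B_i$ the unique stratum containing $y$) yields a $G_y$-equivariant isomorphism $X_y\cong F\times_k\kappa(y)$. The content lies in the converse, which I would prove by Noetherian induction on $Y$: it suffices to exhibit a $G$-invariant non-empty open subscheme $U\subseteq Y$ over which $f$ restricts to a $G$-equivariant trivial fibration with fiber $F$. Applying the induction hypothesis to the $G$-invariant closed complement $Y\setminus U$ then produces the desired stratification of $Y$ and completes the argument.

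To produce such a $U$, first reduce to the case where $Y$ is affine: since the $G$-action on $Y$ is good and $G$ is finite, every point of $Y$ has a $G$-invariant affine open neighborhood. Let $\eta$ be a generic point of $Y$ and let $\phi_\eta\colon X_\eta\to F\times_k\kappa(\eta)$ be the $G_\eta$-equivariant isomorphism furnished by the hypothesis. Since $f$ and $\pr_Y\colon F\times_k Y\to Y$ are morphisms of finite type, $\phi_\eta$ is encoded by finitely many polynomial data with coefficients in $\mathcal O_{Y,\eta}$; clearing denominators spreads $\phi_\eta$ to an isomorphism $\phi_V\colon X_V\to F\times_k V$ defined over an open neighborhood $V\ni\eta$. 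The $G_\eta$-equivariance of $\phi_\eta$, being a finite-type condition comparing two morphisms of $V$-schemes, likewise extends after further shrinking $V$ to a $G_\eta$-invariant open, so that $\phi_V$ itself is $G_\eta$-equivariant.

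Finally, enlarge $V$ to a $G$-invariant open. Let $U$ be the image in $Y$ of the action morphism $a\colon G\times_k V\to Y$; it is a $G$-invariant open subscheme of $Y$ containing $V$, and the restriction $G\times_k V\to U$ is faithfully flat and of finite presentation because $G$ is finite locally free over $k$. The $G_\eta$-equivariance of $\phi_V$ packages precisely into a descent datum for $\phi_V$ along the fppf cover $G\times_k V\to U$ relative to the diagonal $G$-action on $F\times_k U$; fppf descent then produces the required $G$-equivariant trivialization $\phi_U\colon X_U\to F\times_k U$.

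The principal obstacle is the equivariant spreading-out together with the fppf descent step, which must be carried out uniformly even when $G$ is non-smooth or has infinitesimal stabilizers. In the étale case the $G$-orbit of $\eta$ is a finite disjoint union of translates and the descent is transparent; in the general setting of a finite group $k$-scheme over a perfect field, one must invoke that $G_\eta$ is a closed $\kappa(\eta)$-subgroup scheme of $G\times_k\kappa(\eta)$ and that the quotient $G/G_\eta$ is representable, both of which hinge on perfectness of $k$ and goodness of the action. This generalizes the argument of \cite[Thm.~2.2]{LN} from its original setting to that of an arbitrary finite group $k$-scheme over a perfect base.
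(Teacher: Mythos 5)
Your skeleton --- Noetherian induction on $G$-invariant closed subsets, trivialization at a generic point, spreading out, and gluing over the $G$-orbit of the resulting open --- is the standard route and is presumably what the paper intends by deferring to \cite[Thm.~2.2]{LN}. The easy direction is fine, and your identification of the cover to descend along is correct: $a\colon G\times_k V\to U$ is indeed faithfully flat of finite presentation, since it is the open immersion $(g,v)\mapsto (g,g\cdot v)$ into $G\times_k Y$ followed by the finite flat projection $\mathrm{pr}_2\colon G\times_k Y\to Y$, and morphisms into a fixed separated target do satisfy fppf descent.

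The gap is in the middle step. The stabilizer $G_\eta$ is a closed subgroup scheme of $G\times_k\kappa(\eta)$ defined over the residue field $\kappa(\eta)$; it does not act on $V$ or on $X_V$ as $k$-schemes, so ``shrink $V$ to a $G_\eta$-invariant open so that $\phi_V$ is $G_\eta$-equivariant'' does not typecheck, and the assertion that this ``packages precisely into a descent datum'' is exactly the point requiring proof. What must be verified is the cocycle condition on $(G\times_k V)\times_U(G\times_k V)\cong\{(g,h,v): v\in V,\ h\cdot v\in V\}$, namely that $\phi_V(h\cdot x)=h\cdot\phi_V(x)$ whenever $f(x)$ and $h\cdot f(x)$ both lie in $V$. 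Since this is the statement that two morphisms to the separated scheme $F$ agree, it is a closed condition, so it suffices to check it at the generic points of the double fiber product and then shrink $V$ by a constructibility argument. At those generic points the elements $h$ that occur are precisely those carrying one generic point of $Y$ to another; so you must first decompose $Y$ into $G$-orbits of irreducible components, choose $\phi_\eta$ at one generic point per orbit, \emph{define} the trivialization on the remaining components of that orbit by transport of structure, and observe that the residual compatibility is exactly the $G_\eta$-equivariance of $\phi_\eta$. None of this appears in your write-up: the hypothesis is invoked at only one point, reducible $Y$ with permuted components is not addressed, and you explicitly leave unresolved the case of a non-smooth $G$ (e.g. $\mu_p$ in characteristic $p$), which is precisely the case the fppf formulation is there to handle. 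As it stands the proposal is a correct plan whose decisive verification is missing.
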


\begin{proof}
Similarly as in the proof of \cite[Thm. 2.2]{LN}.
\end{proof}

\begin{theorem}\label{keycoro} 
Let $G$ be a finite group $k$-scheme. Suppose that $X$ and $Y$ are algebraic $k$-varieties endowed with a good $G$-action and that $f\colon X\to Y$ is a $G$-equivariant morphism. If there is an $n$ in $\mathbb N$ such that for every $y$ in $Y$, there is an isomorphism of algebraic $\kappa(y)$-varieties $X_y\cong \mathbb{A}^n_{\kappa(y)}$, then $[X]=[Y]\mathbb{L}^n$ in $K_0^G(\Var_k)$.
\end{theorem}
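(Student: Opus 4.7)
The strategy is to reduce Theorem~\ref{keycoro} to Theorem~\ref{fund2} via a suitable $G$-invariant stratification of $Y$, and then to exploit the defining relation of $K_0^G(\Var_k)$ which equates $[X\times_k\mathbb A_k^n,\sigma]$ with $[X\times_k\mathbb A_k^n,\sigma']$ whenever two good $G$-actions $\sigma,\sigma'$ on $X\times_k\mathbb A_k^n$ lift the same good action on $X$.

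First I would construct a finite stratification $Y = \bigsqcup_j Y_j$ into $G$-invariant locally closed subsets, together with, for each $j$, a $k$-variety $F_j$ isomorphic to $\mathbb A_k^n$ and endowed with a good $G$-action, such that the hypothesis of Theorem~\ref{fund2} is satisfied for the restriction $f|_{X_{Y_j}}\colon X_{Y_j}\to Y_j$ with fiber $F_j$: namely, a $G_y$-equivariant isomorphism $X_y\cong F_j\times_k\kappa(y)$ for every $y\in Y_j$. Since $G$ is a finite group $k$-scheme, only finitely many closed subgroup schemes of $G$ occur as stabilizers $G_y$, and within each stabilizer type the $G_y$-equivariant isomorphism class of the fiber $X_y\cong \mathbb A_{\kappa(y)}^n$ varies constructibly in $y$, so such a stratification exists (with Noetherian induction on $\dim Y$ if needed).

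Then, applying Theorem~\ref{fund2} on each stratum, the morphism $f|_{X_{Y_j}}$ is a $G$-equivariant piecewise trivial fibration with fiber $F_j$; by definition, $Y_j$ further refines into $G$-invariant locally closed pieces $Y_{j,l}$ with $X_{Y_{j,l}}\cong Y_{j,l}\times_k F_j$ $G$-equivariantly under the diagonal action. Identifying $F_j$ with $\mathbb A_k^n$ as $k$-varieties, the defining relation of $K_0^G(\Var_k)$ gives
\[
[X_{Y_{j,l}}] = [Y_{j,l}\times_k \mathbb A_k^n,\text{ diagonal}] = [Y_{j,l}\times_k \mathbb A_k^n,\text{ trivial on the fiber}] = [Y_{j,l}]\,\L^n,
\]
since both actions lift the given good $G$-action on $Y_{j,l}$. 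Summing over all strata yields $[X] = [Y]\,\L^n$ in $K_0^G(\Var_k)$, as claimed.

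The main obstacle I expect is the first step: producing the uniform fiber $F_j$ on each stratum, since the hypothesis only provides non-equivariant fiberwise isomorphisms $X_y\cong \mathbb A_{\kappa(y)}^n$, while Theorem~\ref{fund2} requires a single $F_j$ with $G$-action together with $G_y$-equivariant identifications. The finiteness of $G$ and constructibility of equivariant structures under finite group $k$-scheme actions should supply the required stratification, mirroring the argument used for \cite[Thm.~2.2]{LN} in the non-equivariant setting.
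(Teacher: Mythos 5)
Your closing step --- transporting the $G$-action on $X_{Y_{j,l}}$ to $Y_{j,l}\times_k\mathbb{A}^n_k$ and invoking the defining relation of $K_0^G(\Var_k)$ that identifies two lifts of the same good action on $Y_{j,l}$ --- is exactly the mechanism that makes the theorem true, and it agrees with the argument of \cite[Thm.~2.3]{LN} to which the paper defers. The problem is the route you take to get there. You reduce to Theorem~\ref{fund2} by positing, on each stratum $Y_j$, a single $k$-variety $F_j\cong\mathbb{A}^n_k$ with a $G$-action together with \emph{$G_y$-equivariant} isomorphisms $X_y\cong F_j\times_k\kappa(y)$ for all $y\in Y_j$. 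The hypothesis of Theorem~\ref{keycoro} supplies only non-equivariant isomorphisms $X_y\cong\mathbb{A}^n_{\kappa(y)}$, and your justification --- that ``the $G_y$-equivariant isomorphism class of the fiber varies constructibly'' and falls into finitely many types --- is unsubstantiated and should not be expected to hold: the fibers carry $G_y$-actions on $\mathbb{A}^n_{\kappa(y)}$ by arbitrary automorphisms of affine space, and the classification of finite group(-scheme) actions on $\mathbb{A}^n$ up to equivariant isomorphism is neither finite nor constructible in any useful sense (already linearizability of such actions is a hard open problem for $n\geq 3$). So the first step of your plan is a genuine gap, and Theorem~\ref{fund2} is the wrong tool here: its hypothesis is strictly stronger than what Theorem~\ref{keycoro} assumes.

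The intended argument bypasses equivariant triviality altogether. By the classical non-equivariant statement (a morphism all of whose fibers are isomorphic to $\mathbb{A}^n$ is a piecewise trivial fibration, cf.\ \cite{Se}), refined by Noetherian induction and the finiteness of $G$ to make the strata $G$-invariant, one obtains a finite stratification of $Y$ into $G$-invariant locally closed subsets $Y_i$ together with isomorphisms $X_{Y_i}\cong Y_i\times_k\mathbb{A}^n_k$ of $Y_i$-varieties, with no equivariance required. Transporting the $G$-action of $X_{Y_i}$ through such an isomorphism yields an action $\sigma_i$ on $Y_i\times_k\mathbb{A}^n_k$ lifting the given action on $Y_i$ (because $f$ is $G$-equivariant and the isomorphism is over $Y_i$); the defining relation then gives $[X_{Y_i}]=[Y_i\times_k\mathbb{A}^n_k,\sigma_i]=[Y_i]\L^n$, and summing over $i$ finishes the proof. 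In short: keep your last paragraph, but replace the appeal to Theorem~\ref{fund2} and the uniform equivariant fibers $F_j$ by a $G$-invariant stratification with \emph{non-equivariant} trivializations over each stratum.
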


\begin{proof}
Similarly as in the proof of \cite[Thm. 2.3]{LN}.
\end{proof}

%******************************

\subsection{Rational series}\label{Hadamard}
Let $\mathscr M$ be a commutative ring with unity containing $\L$ and $\L^{-1}$, and let $\mathscr M[[T]]$ be the set of formal power series in one variable $T$ with coefficients in $\mathscr M$, which is a ring and also a $\mathscr M$-module with respect to usual operations for series. Denote by $\mathscr M[[T]]_{\sr}$ the submodule of $\mathscr M[[T]]$ generated by 1 and by finite products of terms $\frac{\L^aT^b}{1-\L^aT^b}$ for $(a,b)$ in $\mathbb{Z}\times\mathbb N^*$. Any element of $\mathscr M[[T]]_{\sr}$ is called a {\it rational} series. It is proved in \cite{DL1} that there exists a unique $\mathscr M$-linear morphism $\lim\limits_{T\to\infty}: \mathscr M[[T]]_{\sr}\to \mathscr M$ such that $\lim\limits_{T\to\infty}\frac{\L^aT^b}{1-\L^aT^b}=-1$
for any $(a,b)$ in $\mathbb{Z}\times\mathbb N^*$. 

The Hadamard product of two formal power series $p=\sum_{n\geq 1}p_nT^n$ and $q=\sum_{n\geq 1}q_nT^n$ in $\mathscr M[[T]]$ is a formal power series in $\mathscr M[[T]]$ defined as $p\ast q:=\sum_{n \geq 1}p_n q_nT^n$. This product is commutative, associative, and has the unity $\sum_{n\geq 1}T^n$. It also preserves the rationality as seen in the following lemma.

\begin{lemma}[Looijenga \cite{Loo}]\label{Lem2}
If $p(T)$ and $q(T)$ are rational series in $\mathscr M[[T]]$, then $p(T)\ast q(T)$ is also a rational series, and in this case,
$$\lim_{T\to\infty}p(T)\ast q(T)=-\lim_{T\to\infty}p(T) \cdot \lim_{T\to\infty}q(T).$$
\end{lemma}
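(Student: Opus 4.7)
The plan is to exploit bilinearity to reduce to pure generators, identify the Hadamard product as a lattice-point generating series on a rational polyhedral cone, and then verify both rationality and the limit formula via triangulation.

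First, by $\mathscr M$-linearity of $\lim_{T\to\infty}$ and $\mathscr M$-bilinearity of $\ast$, it suffices to assume that $p$ and $q$ are generators of $\mathscr M[[T]]_{\sr}$, say
$$p(T) = \prod_{i=1}^{r}\frac{\L^{a_i}T^{b_i}}{1-\L^{a_i}T^{b_i}}, \qquad q(T) = \prod_{j=1}^{s}\frac{\L^{a_j'}T^{b_j'}}{1-\L^{a_j'}T^{b_j'}}.$$
Expanding each factor as a geometric series gives $p(T) = \sum_{\mathbf n\in(\mathbb N^*)^r}\L^{\mathbf a\cdot\mathbf n}T^{\mathbf b\cdot\mathbf n}$ and similarly for $q(T)$, so
$$(p\ast q)(T) = \sum_{(\mathbf n,\mathbf m)\in\Lambda}\L^{\mathbf a\cdot\mathbf n+\mathbf a'\cdot\mathbf m}\,T^{\mathbf b\cdot\mathbf n}, \qquad \Lambda = \{(\mathbf n,\mathbf m)\in(\mathbb N^*)^{r+s} : \mathbf b\cdot\mathbf n = \mathbf b'\cdot\mathbf m\}.$$
Thus $p\ast q$ is the generating series of the strictly positive integer points of the rational polyhedral cone $C = \{(\mathbf n,\mathbf m)\in\mathbb R_{\geq 0}^{r+s} : \mathbf b\cdot\mathbf n = \mathbf b'\cdot\mathbf m\}$, weighted by a monomial in $\L$ and $T$.

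To establish rationality I would decompose $C$ into finitely many relatively open rational simplicial subcones $C_\alpha$ with primitive integral generators $v_\alpha^1,\dots,v_\alpha^{k_\alpha}$. Standard lattice-point counting writes each contribution in the closed form
$$\frac{P_\alpha(\L,T)}{\prod_{i=1}^{k_\alpha}(1-\L^{A_{\alpha,i}}T^{B_{\alpha,i}})},$$
where $P_\alpha$ is a finite sum running over the half-open fundamental parallelepiped of the generators, shifted so as to enforce the strict positivity imposed by $\Lambda \subset (\mathbb N^*)^{r+s}$. Each such rational expression is an $\mathscr M$-linear combination of products of simple factors $\frac{\L^a T^b}{1-\L^a T^b}$, hence lies in $\mathscr M[[T]]_{\sr}$; summing over $\alpha$ yields the rationality of $p\ast q$.

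It remains to verify the identity $\lim_{T\to\infty}(p\ast q) = -(\lim_{T\to\infty}p)(\lim_{T\to\infty}q)$. The model case $r=s=1$ can be handled by hand: one computes $p\ast q = \frac{\L^{(ab'+a'b)/d}T^{\mathrm{lcm}(b,b')}}{1-\L^{(ab'+a'b)/d}T^{\mathrm{lcm}(b,b')}}$ with $d=\gcd(b,b')$, whose limit is $-1 = -(-1)(-1)$, matching the formula. The main obstacle is to push this verification through arbitrary $r,s$: one must apply $\lim_{T\to\infty}$ to each simplicial contribution using the partial-fraction identities that reduce $\frac{1}{(1-x)^k}$ to sums of simple generators, and then track how the signs contributed by the numerators $P_\alpha$ combine across the triangulation. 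The technical heart of the argument is exactly this combinatorial bookkeeping: showing that the aggregate of the $\mathscr M$-valued limits collapses, with overall sign $-1$, to the product $(\lim p)(\lim q)$.
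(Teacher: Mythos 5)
Your reduction to generators, the identification of $p\ast q$ with a weighted lattice-point generating series over the cone $C=\{(\mathbf n,\mathbf m)\in\mathbb R_{\geq 0}^{r+s}:\mathbf b\cdot\mathbf n=\mathbf b'\cdot\mathbf m\}$, and the $r=s=1$ computation are all correct, and this cone-decomposition route is essentially how the result is proved in the literature (the paper itself offers no argument, only the citation to Looijenga). The genuine gap is at the step where you assert that each simplicial contribution $P_\alpha(\L,T)/\prod_{i}(1-\L^{A_{\alpha,i}}T^{B_{\alpha,i}})$ ``is an $\mathscr M$-linear combination of products of simple factors, hence lies in $\mathscr M[[T]]_{\sr}$''. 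The module $\mathscr M[[T]]_{\sr}$ is generated by $1$ and by products of \emph{matched} terms $\L^{a}T^{b}/(1-\L^{a}T^{b})$; a quotient whose numerator runs over the lattice points $\L^{c}T^{e}$ of a fundamental parallelepiped does not visibly belong to it, because $(c,e)$ need not be a nonnegative integral combination of the $(A_{\alpha,i},B_{\alpha,i})$, and stray monomial factors such as $T^{e}$ are not themselves known to lie in $\mathscr M[[T]]_{\sr}$. Since $\lim_{T\to\infty}$ is only defined on that module, this infects both halves of the lemma. The repair is to make the triangulation \emph{unimodular}: note first that $\Lambda=\mathrm{relint}(C)\cap\mathbb Z^{r+s}$ (each coordinate hyperplane cuts $C$ in a proper face, and $C$ meets the open orthant, so $\dim C=r+s-1$), then partition $\mathrm{relint}(C)$ into finitely many relatively open simplicial cones whose primitive generators form part of a $\mathbb Z$-basis. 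Each fundamental parallelepiped then contains a single lattice point, and a $k$-dimensional open cone contributes exactly $\prod_{i=1}^{k}\L^{A_i}T^{B_i}/(1-\L^{A_i}T^{B_i})$ with all $B_i\geq 1$ (every nonzero point of $C$ has positive $T$-degree), i.e.\ an honest generator of $\mathscr M[[T]]_{\sr}$.

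With that refinement, the ``combinatorial bookkeeping'' you defer at the end — and which, as written, you leave unexecuted, so the limit formula is not actually established for $r,s\geq 2$ — disappears. The limit of the $k$-dimensional piece is $(-1)^{k}$, so by additivity of the compactly supported Euler characteristic over the partition,
$$\lim_{T\to\infty}(p\ast q)=\sum_{\alpha}(-1)^{\dim C_\alpha}=\chi_c\big(\mathrm{relint}(C)\big)=(-1)^{\dim C}=(-1)^{r+s-1}=-(-1)^{r}(-1)^{s}=-\lim_{T\to\infty}p\cdot\lim_{T\to\infty}q,$$
using that $\mathrm{relint}(C)\cong\mathbb R^{\dim C}$. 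This Euler-characteristic identity is the missing idea: it makes the sign count independent of the chosen decomposition and replaces the partial-fraction manipulations you propose, which would otherwise have to be tracked by hand.
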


%%%%%%%%%%%%%%%%%%%%%%%%%%%%%%%%%%%%%%%%%%%%%%%%%%%%%%%%%%%
%%%%%%%%%%%%%%%%%%%%%%%%%%%%%%%%%%%%%%%%%%%%%%%%%%%%%%%%%%%
%%%%%%%%%%%%%%%%%%%%%%%%%%%%%%%%%%%%%%%%%%%%%%%%%%%%%%%%%%%
%%%%%%%%%%%%%%%%%%%%%%%%%%%%%%%%%%%%%%%%%%%%%%%%%%%%%%%%%%%

\section{Equivariant motivic integration on stft formal schemes}
Let $R$ be a complete discrete valuation ring with fraction field $K$ and residue field $k$. Assume that $K$ and $k$ have the same characteristic. Fix a $k$-algebra structure on $R$ so that the composition $k\hookrightarrow R\to k$ is $\mathrm{Id}_k$. Let $\varpi\in R$ be a uniformizing parameter, which is fixed throughout this article, and let $R_n=R/(\varpi^{n+1})$. We also fix a separable closure $K^s$ of $K$, denote respectively by $K^t$ and $K^{sh}$ the tame closure and strict henselization of $K$ in $K^s$.

\subsection{Formal schemes topologically of finite type with action}\label{section3.1}
According to EGA1, Ch. 0, 7.5.3, an adic $R$-algebra $A$ is {\it of finite type} if $(\varpi)A$ is an ideal of definition of $A$ and $A/(\varpi)A$ is a $k$-algebra of finite type. For any $n\in \mathbb N$, let $R\{x_1,\dots,x_n\}$ denote the $R$-algebra of restricted power series in $n$ variables, namely, 
$$R\{x_1,\dots,x_n\}=\varprojlim_{\ell}  (R/(\varpi^{\ell}))[x_1,\dots,x_n].$$
Clearly, $R\{x_1,\dots,x_n\}$ is a Noetherian ring and the definition of $R\{x_1,\dots,x_n\}$ is independent of the choice of $\varpi$. It is a fact that $A$ is of finite type if it is topologically $R$-isomorphic to a quotient algebra of  $R\{x_1,\dots,x_n\}$ for some $n\in \mathbb N$. 

For any adic ring $A$, we denote by $\Spf A$ the set of all open prime ideals of $A$, which has a structure of a locally ringed space and is called {\it the formal spectrum of $A$}. A Noetherian adic formal scheme is a locally ringed space which is locally isomorphic to the formal spectrum of a Noetherian adic ring.

\begin{definition}
A {\it formal $R$-scheme topologically of finite type} is a Noetherian adic formal scheme which is a finite union of affine formal schemes of the form $\Spf A$ with $A$ an adic $R$-algebra of finite type.  
\end{definition}

If $\mathfrak X$ is a separated formal $R$-scheme topologically of finite type over $R$, it will be abbreviated as {\it stft} formal $R$-scheme. Such an $\mathfrak X$ is nothing but the inductive limit of the $R_n$-schemes 
\begin{align}\label{Xnstft}
X_n=(\mathfrak X, \mathcal O_{\mathfrak X}\otimes_RR_n),
\end{align} 
and the transition morphisms $X_n\to X_m$ as $R_m$-schemes (for $n\leq m$) are induced from the truncated map $R_m\to R_n$. Using the morphism $X_n\to X_m$ we have 
\begin{align}\label{XmXn}
X_n\cong X_m\times_{\Spec R_m}\Spec R_n.
\end{align}
Clearly, the category of formal $R$-schemes topologically of finite type admits fiber products. 

\begin{definition}\label{fmorphisms}
A {\it morphism} of stft formal $R$-schemes $\mathfrak{Y}\to\mathfrak{X}$ is a morphism between the underlying locally topologically ringed spaces over $R$. This morphism is said to be {\it locally of finite type} if locally it is a morphism of the form $\Spf B\to \Spf A$, where the corresponding $R$-morphism $A\to B$ is of finite type.
\end{definition}

\begin{notation}
For any Noetherian adic formal scheme $\mathfrak X$ we denote by $\mathfrak X_s$ the {\it special fiber} $\mathfrak X\times_Rk$ of $\mathfrak X$, which is a formal $k$-scheme, and denote by $\mathfrak X_0$ the closed subscheme of $\mathfrak X$ defined by the largest ideal of definition of $\mathfrak X$, called the {\it reduction} of $\mathfrak X$, which is a reduced noetherian scheme. If $\mathfrak X$ is a stft formal $R$-scheme, then $\mathfrak X_s=X_0$ (cf. eq. (\ref{Xnstft})), which is a separated $k$-scheme of finite type, and has the property $\mathfrak X_0=(\mathfrak X_s)_{\red}$.
\end{notation}

Assume that $\mathfrak X=\Spf A$, where $A$ is an $R$-algebra of finite type. The tensor product $A\otimes_RK$ is then a $K$-affinoid algebra in the sense of Tate \cite{Tate}. The rigid $K$-variety $\Spm(A\otimes_RK)$ is called {\it the generic fiber of $\mathfrak X$} and denoted by $\mathfrak X_{\eta}$. It is shown that the correspondence 
$$\Spf A\mapsto \Spm(A\otimes_RK)$$ 
is functorial, and that it can be extended to any stft formal $R$-schemes $\mathfrak X\mapsto \mathfrak X_{\eta}$ by glueing procedure along open coverings of $\mathfrak X$ (see \cite{Berthelot} or \cite{deJ}). The rigid $K$-variety $\mathfrak X_{\eta}$ is separated and quasi-compact. The formal scheme $\mathfrak X$ is called {\it generically smooth} if its generic fiber $\mathfrak X_{\eta}$ is a smooth rigid $K$-variety.

%Let us now study action of a group scheme on a formal $R$-scheme. We start with a similar idea for $k$-formal schemes.

\begin{definition}\label{action-k}
Let $G$ be a finite group $k$-scheme with $m_G\colon G\times_kG\to G$ the multiplication and $e_G\in G(k)$ the neutral element. A {\it (left) $G$-action} on a formal $k$-scheme $\mathfrak X$ is a morphism of formal $k$-schemes 
$$\theta \colon G\times_k \mathfrak X \to \mathfrak X, \quad (g,x)\mapsto g\cdot x$$
such that the composite map $\theta\circ (e_G\times\Id): \mathfrak X\cong \Spec k\times \mathfrak X \to G\times \mathfrak X \to \mathfrak X$ is the identity and that the below diagram commutes:
$$\begin{CD}
G\times_kG\times_k \mathfrak X @>\Id\times\theta>> G\times_k \mathfrak X\\
@Vm_G\times \Id VV @VV\theta V\\
G\times_k \mathfrak X @>\theta >> \ \! \mathfrak X.
\end{CD}$$	
Assume that $(\mathfrak X,\theta)$ and $(\mathfrak Y,\theta')$ are formal $k$-schemes endowed with $G$-action. Then a morphism $\mathfrak f\colon \mathfrak X \to \mathfrak Y$ is called {\it $G$-equivariant} if the following diagram commutes:
$$\begin{CD}
G\times_k \mathfrak X @>\theta>> \mathfrak X\\
@V\Id_G\times \mathfrak f VV @VV \mathfrak f V\\
G\times_k \mathfrak Y @>\theta'>> \ \! \mathfrak Y.
\end{CD}$$
\end{definition}

Since $\Spf R=\varinjlim\Spec R_n$, any $G$-action $\sigma$ on $\Spf R$ induces a unique $G$-action $\sigma_n$ on $\Spec R_n$ such that the obvious $k$-morphisms $\iota_n\colon \Spec R_n\to \Spf R$ and $\iota_{n,m}\colon \Spec R_n\to \Spec R_m$ (for $n\leq m$) are $G$-invariant. From now on, we fix a $G$-action $\sigma$ on $\Spf R$.

\begin{definition}\label{action-R}
Let $\mathfrak X$ be a formal $R$-scheme. A {\it $G$-action} on $\mathfrak X$ is a $G$-action on the formal $k$-scheme $\mathfrak X$ (with the $k$-scheme structure induced from $k\hookrightarrow R$) such that the structural morphism $\mathfrak X\to \Spf R$ viewed as a morphism of formal $k$-schemes is $G$-equivariant. A $G$-action on $\mathfrak X$ is called {\it good} if any orbit of it is contained in an affine open formal subscheme of $\mathfrak X$. 
\end{definition}

Let $\mathfrak X$ be a stft formal $R$-scheme, which is the inductive limit of $R_n$-scheme $X_n$ mentioned above. Then a $G$-action $\theta$ on $\mathfrak X$ induces a unique $G$-action $\theta_n$ on $X_n$ such that the obvious $k$-morphisms $\rho_n\colon X_n\to \mathfrak X$ and $\rho_{n,m}\colon X_n\to X_m$ (for $n\leq m$) are $G$-equivariant. If $\mathfrak f\colon \mathfrak Y \to \mathfrak X$ is a morphism of stft formal $R$-schemes, then the $G$-equivariance of $\mathfrak f$ induces $G$-equivariant morphisms $f_n\colon Y_n \to X_n$ compatible with $\rho_n$ and $\rho_{n,m}$. In particular, $\theta_0$ is the $G$-action on the special fiber $\mathfrak X_s=X_0$ induced from $\theta$, and the $G$-equivariant morphism of $k$-schemes $\mathfrak f_s=f_0\colon \mathfrak Y_s\to \mathfrak X_s$ is induced from the $G$-equivariant morphism of formal $R$-schemes $\mathfrak f$.

\subsection{Greenberg spaces of stft formal schemes}
Let $\mathfrak X$ be a stft formal $R$-scheme, which is the inductive limit of the $R_n$-schemes $X_n$ described in (\ref{Xnstft}). Since $R$ is of equal characteristic, the functor defined locally by
$$\Spec A\mapsto \Hom_{R_n}(\Spec\left(R_n\otimes_kA\right),X_n)$$ 
from the category of $k$-schemes to the category of sets is represented by a $k$-scheme $\Gr_n(X_n)$ of finite type such that, for any $k$-algebra $A$,
$$\Gr_n(X_n)(A)=X_n(R_n\otimes_kA),$$  
see Greenberg \cite{Gr}. For $n\leq m$ and $\gamma\colon \Spec(R_m\otimes_kA)\to X_m$ in $\Gr_m(X_m)(A)$, tensoring with $\Spec R_n$ over $\Spec R_m$ we get an element
\begin{align}\label{gammatilde}
\widetilde\gamma:=\gamma \times_{R_m} \Id\colon \Spec(R_n\otimes_kA)\to X_m\times_{\Spec R_m}\Spec R_n\cong X_n
\end{align}
in $\Gr_n(X_n)(A)$, where $\gamma \times_{R_m} \Id$ stands for $\gamma \times_{\Spec R_m} \Id_{\Spec R_n}$. Thus, the correspondence $\gamma\mapsto \widetilde\gamma$ gives a map $\Gr_m(X_m)(A)\to \Gr_n(X_n)(A)$, and by the functoriality on $A$ we have a canonical morphism of $k$-schemes 
$$\pi_n^m=(\pi_{\mathfrak X})_n^m: \Gr_m(X_m)\to \Gr_n(X_n).$$
We obtain a projective system $\left\{\big(\Gr_n(X_n);\pi_n^m\big) \mid n\leq m\right\}$ in the category of separated $k$-schemes of finite type. Put $\Gr(\mathfrak X):=\varprojlim_n  \Gr_n(X_n)$, which exists in the category of $k$-schemes. Writing $\pi_n=\pi_{\mathfrak X,n}$ for the canonical projection $\Gr(\mathfrak X)\to \Gr_n(X_n)$ we have the following commutative diagram, for $n\leq m$,
\begin{displaymath}
\xymatrixcolsep{5pc}\xymatrix{
	\Gr(\mathfrak X) \ar[r]^{\pi_m} \ar[rd]_{\pi_n}
	&\Gr_m(X_m)\ar[d]^{\pi_n^m}\\
	&\Gr_n(X_n).
}
\end{displaymath}
The essential properties of the space $\Gr(\mathfrak X)$ are shown in Sebag \cite{Se} and Loeser-Sebag \cite{LS}. In particular, there is a fact that every point $x\in \Gr(\mathfrak X)$ with residue field $\kappa(x)$ corresponds a morphism $\gamma\colon \Spf \big(R\widehat{\otimes}_k\kappa(x)\big)\to \mathfrak X$. 

\begin{definition}
For any stft formal $R$-scheme $\mathfrak X$, the $k$-scheme $\Gr(\mathfrak X)$ defined previously is called {\it the Greenberg space of $\mathfrak X$}.
\end{definition}

Let $\mathfrak f\colon \mathfrak Y\to \mathfrak X$ be a morphism of stft formal $R$-schemes. Then $\mathfrak f$ is the injective limit of morphisms of compatible $R_n$-schemes $f_n\colon Y_n\to X_n$. The morphisms $f_n$ correspond to the maps 
$$Y_n(R_n\otimes_kA)\to X_n(R_n\otimes_kA)$$
for any $k$-algebra $A$. Since this construction is functorial in $A$, it defines $k$-morphisms of schemes 
$$\Gr_n(f_n)\colon \Gr_n(Y_n)\to \Gr_n(X_n)$$ 
such that the diagram
$$\begin{CD}
\Gr_m(Y_m) @>\Gr_m(f_m)>> \Gr_m(X_m)\\
@V(\pi_{\mathfrak Y})_n^m VV @VV(\pi_{\mathfrak X})_n^m V\\
\Gr_n(Y_n) @>\Gr_n(f_n)>> \Gr_n(X_n)
\end{CD}$$
commutes for every $n\leq m$ in $\mathbb N^*$. Taking the projective limit we get a unique morphism of $k$-scheme 
$$\Gr(\mathfrak f)\colon \Gr(\mathfrak Y)\to \Gr(\mathfrak X)$$ 
such that the following diagram commutes:
$$\begin{CD}
\Gr(\mathfrak Y) @>\Gr(\mathfrak f)>> \Gr(\mathfrak X)\\
@V\pi_{\mathfrak Y,n} VV @VV\pi_{\mathfrak X,n} V\\
\Gr_n(Y_n) @>\Gr_n(f_n)>> \ \!\Gr_n(X_n).
\end{CD}$$

\subsection{$G$-actions on Greenberg spaces}
As above, we consider a stft formal $R$-scheme $\mathfrak X$, with $X_n=(\mathfrak X, \mathcal O_{\mathfrak X}\otimes_RR_n)$. Let $G$ be a finite group $k$-scheme. Let $\theta$ be a good $G$-action on $\mathfrak X$, i.e. a $G$-action $\theta\colon G\times_k \mathfrak X\to \mathfrak X$ on formal $k$-scheme $\mathfrak X$ (the formal $k$-scheme structure on $\mathfrak X$ is induced from the inclusion $k\hookrightarrow R$) with the structural morphism $\mathfrak X\to \Spf R$ being $G$-equivariant. 

\begin{proposition}\label{action-on-Greenberg}
%With the previous notation and hypotheses, 
For any stft formal $R$-scheme $\mathfrak X$, there exist good $G$-actions 
$$\Gr(\theta)\colon G\times_k \Gr(\mathfrak X) \to \Gr(\mathfrak X)$$ 
and 
$$\Gr_n(\theta_n)\colon G\times_k\Gr_n(X_n) \to \Gr_n(X_n),$$ 
for all $n\in \mathbb N$, satisfying the following conditions: 
\begin{itemize}
\item[(i)] $\Gr_0(\theta_0)=\theta_0$, i.e. $\Gr(\theta)$ and $\theta$ induce the same action on $\mathfrak X_s=X_0$;
	
\item[(ii)] The $k$-morphisms $\pi_n$ and $\pi_n^m$ are $G$-equivariant for every $n\leq m$ in $\mathbb N$;
	
\item[(iii)] If $\mathfrak f\colon \mathfrak X\to \mathfrak Y$ is a $G$-equivariant morphism of stft formal $R$-schemes endowed with $G$-action, then the induced morphisms $\Gr(\mathfrak f)$ and $\Gr_n(f_n)$ are $G$-equivariant. 
\end{itemize}
\end{proposition}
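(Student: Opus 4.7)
The plan is to apply the Greenberg functor to the family $\theta_n \colon G \times_k X_n \to X_n$ of $G$-actions induced by $\theta$, exploiting that each $\theta_n$ covers $\sigma_n$ on $\Spec R_n$ (Section \ref{section3.1}) while $\Gr_n(\Spec R_n) = \Spec k$, so the twist by $\sigma_n$ trivialises at the Greenberg level.

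I define $\Gr_n(\theta_n) \colon G \times_k \Gr_n(X_n) \to \Gr_n(X_n)$ on functors of points. For a $k$-algebra $A$, write $h_n(A) := R_n \otimes_{L(k)} L(A)$, so an $A$-point of $\Gr_n(X_n)$ is an $R_n$-morphism $\gamma \colon \Spec h_n(A) \to X_n$. Given $g \in G(A)$, let $\tilde g \colon \Spec h_n(A) \to G$ be the extension of $g$ via $L$-functoriality (in equal characteristic, through the inclusion $A \hookrightarrow h_n(A)$; in unequal characteristic, through the analogous Witt-vector map), and let $\phi_g$ be the automorphism of $\Spec h_n(A)$ over $\Spec L(A)$ that covers $\sigma_n(\tilde g) \colon \Spec R_n \to \Spec R_n$. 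Set
$$g \cdot \gamma := \theta_n \circ (\tilde g, \gamma \circ \phi_g^{-1}) \colon \Spec h_n(A) \to X_n.$$
The twist by $\phi_g^{-1}$ compensates for the $\sigma_n(\tilde g)$-twist introduced by $\theta_n$, so composition with the structural map $X_n \to \Spec R_n$ returns the natural projection $\Spec h_n(A) \to \Spec R_n$; hence $g \cdot \gamma$ is an $R_n$-morphism. Associativity and unitality follow from the corresponding axioms for $\theta_n$ and $\sigma_n$ combined with the multiplicativity of $g \mapsto \phi_g$.

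Properties (i)--(iii) follow from naturality. At $n = 0$, $R_0 = k$, $h_0(A) = A$, and $\phi_g = \Id$, whence $\Gr_0(\theta_0) = \theta_0$, proving (i). For (ii), the transition morphisms $\pi_n^m$ come from the $G$-equivariant truncations $R_m \to R_n$, and the construction is natural in $n$, so the actions $\Gr_n(\theta_n)$ are compatible; the inverse limit yields $\Gr(\theta)$ with $\pi_n$ and $\pi_n^m$ automatically $G$-equivariant. For (iii), a $G$-equivariant morphism $\mathfrak f \colon \mathfrak Y \to \mathfrak X$ induces $G$-equivariant $f_n$, and applying the above construction preserves equivariance on Greenberg spaces. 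Goodness of $\Gr_n(\theta_n)$ follows from $G$-equivariance of the natural truncation $\Gr_n(X_n) \to X_0 = \mathfrak X_s$: any $G$-orbit in $\Gr_n(X_n)$ projects into a $G$-orbit of $X_0$ contained in a $G$-invariant affine open $U$ (by goodness of $\theta_0$), whose preimage in $\Gr_n(X_n)$ is affine (since Greenberg sends affine formal $R$-schemes to affine $k$-schemes) and $G$-invariant, and contains the original orbit; the same argument gives goodness of $\Gr(\theta)$. The main obstacle is making the construction of the extension $\tilde g$ precise in the unequal characteristic case, where the absence of a natural $k$-algebra map $A \to h_n(A)$ requires careful use of Witt-vector functoriality for finite $k$-group schemes; this is technical but standard.
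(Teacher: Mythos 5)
Your construction is essentially the paper's: both define the action on $A$-points of $\Gr_n(X_n)$ by pairing the point $g$ with $\gamma$ and composing with $\theta_n$, verify the group axioms and functoriality in $A$, invoke Yoneda, and pass to the projective limit to get $\Gr(\theta)$, with (i)--(iii) following from compatibility of the construction with the transition maps. Your explicit twist by $\phi_g^{-1}$ (needed so that $g\cdot\gamma$ is again an $R_n$-morphism when $\sigma_n$ is nontrivial) makes precise what the paper buries in its unexplained map $i$, and your goodness argument via the $G$-equivariant truncation to $X_0$ spells out a step the paper dismisses as clear.
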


\begin{proof}
Let $A$ be an arbitrary $k$-algebra. Then the $G$-action $\sigma_n$ on $\Spec R_n$ induces naturally a $G$-action on $\Spec(R_n\otimes_kA)$, which is by abuse of notation also denoted by $\sigma_n$. Consider the following map %:=\Gr_n(\sigma_n,\theta_n)(A)
$$\Gr_n(\theta_n)\colon G(A)\times X_n(R_n\otimes_kA)\to X_n(R_n\otimes_kA)$$
in which, for $g\in G(A)$ and $\gamma\in X_n(R_n\otimes_kA)$, the morphism $g\cdot \gamma:=\Gr_n(\theta_n)(g,\gamma)$ is given by the commutative diagram
$$\begin{CD}
\Spec(R_n\otimes_kA) @>i>>\Spec A\times_k \Spec(R_n\otimes_kA)\\
@Vg\cdot\gamma VV @VVg\times \gamma V\\
X_n @<\theta_n<< G\times_k X_n.
\end{CD}$$
Let $e$ be the neutral element of $G(A)$. Then $e\cdot\gamma=\gamma$ for any $\gamma\in X_n(R_n\otimes_kA)$. For $g, h\in G(A)$ and $\gamma\in X_n(R_n\otimes_kA)$, we have
\begin{align*}
h\cdot(g\cdot \gamma)&=\theta_n \circ (h\times g\cdot\gamma)\circ i,\\
(hg)\cdot\gamma&=\theta_n\circ (hg\times\gamma)\circ i.
\end{align*}
Since $\theta_n$ is a $G$-action on $X_n$ we have $\theta_n \circ (h\times g\cdot\gamma)=\theta_n\circ (hg\times\gamma)$. Hence $h\cdot(g\cdot \gamma)=(hg)\cdot\gamma$, from which $\Gr_n(\theta_n)$ is an action of the abstract group $G(A)$ on $\Gr_n(X_n)(A)$.  

The construction is functorial because for every homomorphism of $k$-algebras $A\to B$ the below diagram commutes:
$$\begin{CD}
\Spec(R_n\otimes_kB) @>>>\Spec B\times_k \Spec(R_n\otimes_kB)\\
@VVV @VVV\\
\Spec(R_n\otimes_kA) @>>>\Spec A\times_k \Spec(R_n\otimes_kA).
\end{CD}$$
Therefore, by the Yoneda lemma, we obtain a $G$-action
$$\Gr_n(\theta_n)\colon G\times_k \Gr_n(X_n)\to \Gr_n(X_n)$$
on $\Gr_n(X_n)$, which relates to $\sigma_n$ and $\theta_n$. It is obvious from the construction that $\Gr_0(\theta_0)=\theta_0$. Moreover, because for $n\leq m$ the diagram
$$\begin{CD}
G\times_k X_n @>\theta_n>> X_n\\
@V\Id\times \rho_{n,m}VV @VV\rho_{n,m} V\\
G\times_k X_m @>\theta_m>> X_m
\end{CD}$$
commutes, it follows by the construction that $\pi_n^m\colon \Gr_m(X_m)\to \Gr_n(X_n)$ are $G$-equivariant. Clearly, if $\theta_n$ is good, so is $\Gr_n(\theta_n)$.
	
Now, putting $\Gr(\theta):= \varprojlim \Gr_n(\theta_n)$ we get a good $G$-action $\Gr(\theta)\colon G\times_k \Gr(\mathfrak X)\to \Gr(\mathfrak X)$, which guarantees that the canonical morphisms $\pi_n\colon \Gr(\mathfrak X)\to \Gr_n(X_n)$ are $G$-equivariant. More concretely, for any $g$ in $G$, any $k$-algebra $A$, $g\in G(A)$ and $\gamma$ in $\Gr(\mathfrak X)(A)$, we have
$$g\cdot\gamma=\Gr(\theta)(g,\gamma)=\theta\circ (g\times \gamma)\circ i.$$
The part (iii) is straightforward by the construction of $G$-action.
\end{proof}

Assume $\mathfrak X$ is a quasi-compact stft formal $R$-scheme. By \cite{Se}, the image $\pi_n(\Gr(\mathfrak X))$ of $\Gr(\mathfrak X)$ is a constructible subset of $\Gr_n(X_n)$, and it is also $G$-invariant by construction. If, in addition, $\mathfrak X$ is smooth of pure relative dimension $d$, we can extend \cite[Lem. 3.4.2]{Se} to the equivariant setting in which the $\mathfrak X_s$-morphism $\pi_n$ is $G$-equivariant surjective and the $\mathfrak X_s$-morphism $\pi_n^{n+1}$ is a $G$-equivariant locally trivial fibration in the Zariski topology with fiber $\mathbb A_{\mathfrak X_s}^d$ for any $n\in\mathbb N$. Furthermore, by using \cite[Lem. 4.3.25]{Se} and the construction of $G$-action we get

\begin{proposition}\label{G-transition}
Let $\mathfrak X$ be a flat quasi-compact stft formal $R$-scheme of relative dimension $d$ which is endowed with good $G$-action. Put
$$\Gr^{(e)}(\mathfrak X)=\Gr(\mathfrak X)\setminus\pi_e^{-1}(\Gr_e((X_e)_{\sing})),$$
where $(X_e)_{\sing}$ is the closed subscheme of non-smooth points of $X_e$. Then, there is an integer $c\geq 1$ such that, for any $e$ and $n$ in $\mathbb N$ with $n\geq ce$, the $\mathfrak X_s$-morphism projection 
$$\pi_{n+1}(\Gr(\mathfrak X))\to \pi_n(\Gr(\mathfrak X))$$ 
is a $G$-equivariant piecewise trivial fibration over $\pi_n(\Gr^{(e)}(\mathfrak X))$ with fiber $\mathbb A_{\mathfrak X_s}^d$. 
\end{proposition}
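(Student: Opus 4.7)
The plan is to assemble two ingredients: the non-equivariant piecewise triviality provided by Sebag \cite[Lem.~4.3.25]{Se}, and Theorem \ref{fund2}, which upgrades a non-equivariant piecewise trivial fibration to a $G$-equivariant one via a pointwise fiber criterion.

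First I would make sure every relevant object is compatible with the $G$-action. By Proposition \ref{action-on-Greenberg} the truncation morphism $\pi_n^{n+1}\colon \Gr_{n+1}(X_{n+1}) \to \Gr_n(X_n)$ is $G$-equivariant, and since $G$ acts on $X_e$ by automorphisms its non-smooth locus $(X_e)_{\sing}$ is $G$-invariant. Therefore $\Gr_e((X_e)_{\sing})$, the preimage $\pi_e^{-1}(\Gr_e((X_e)_{\sing}))$, the complement $\Gr^{(e)}(\mathfrak X)$ and the constructible images $\pi_n(\Gr(\mathfrak X))$, $\pi_{n+1}(\Gr(\mathfrak X))$, $\pi_n(\Gr^{(e)}(\mathfrak X))$ are all $G$-invariant. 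I would then invoke Sebag's lemma in this setting: because $\mathfrak X$ is flat, quasi-compact, stft of pure relative dimension $d$, there exists a uniform $c\geq 1$ such that for every $e$ and every $n\geq ce$ the projection $\pi_{n+1}(\Gr(\mathfrak X))\to \pi_n(\Gr(\mathfrak X))$ restricted over $\pi_n(\Gr^{(e)}(\mathfrak X))$ is a (non-equivariant) piecewise trivial fibration with fiber $\mathbb A_{\mathfrak X_s}^d$.

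To upgrade this to a $G$-equivariant piecewise trivial fibration I would apply Theorem \ref{fund2}. The criterion to verify is that for every point $y$ in $\pi_n(\Gr^{(e)}(\mathfrak X))$ the geometric fiber is $G_y$-equivariantly $\kappa(y)$-isomorphic to $F\times_k \kappa(y)$ for one fixed $k$-variety $F$ with good $G$-action. The fiber in question corresponds, via the description from Proposition \ref{action-on-Greenberg}, to the set of lifts of $\gamma\colon\Spf R_{\kappa(y)}\to\mathfrak X$ from $R_{n,\kappa(y)}$ to $R_{n+1,\kappa(y)}$; at a smooth point of $X_n$ this lift-set is a torsor over $\gamma^*T_{X_n/R_n}\otimes_{\kappa(y)}(\varpi^{n+1}/\varpi^{n+2})$, a $d$-dimensional $\kappa(y)$-vector space. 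Since $y$ is $G_y$-fixed by definition, $\gamma$ is a $G_y$-fixed origin of this torsor, so it acquires a canonical linear $G_y$-action.

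The main verification, and the step where I expect the careful work to lie, is to match this linear $G_y$-representation fiber-by-fiber with a single $G$-equivariant model $F$. For this I would unwind the formula $g\cdot\gamma=\theta_n\circ(g\times\gamma)\circ i$ from the proof of Proposition \ref{action-on-Greenberg} and, using that $\gamma$ is $G_y$-fixed, reduce it via a first-order Taylor expansion in the direction of $\varpi^{n+1}/\varpi^{n+2}$ to the derivative of $\theta_n$ at the image point. This derivative is, by functoriality of the Greenberg construction, the pullback along $\gamma$ of the $G$-equivariant tangent-type bundle on $\mathfrak X_s$, and hence factors through a fixed $G$-equivariant affine-space bundle on $\mathfrak X_s$ which provides the required $F$. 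Once this identification is in hand, Theorem \ref{fund2} yields the $G$-equivariant piecewise trivial fibration asserted by the proposition.
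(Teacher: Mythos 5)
Your proposal is correct and follows essentially the same route as the paper, which justifies this proposition in a single line by combining Sebag's Lemma 4.3.25 with the equivariance of the Greenberg construction established in Proposition \ref{action-on-Greenberg} and the criterion of Theorem \ref{fund2}. The only imprecise step is your claim that the lift $\gamma$ gives a $G_y$-fixed origin of the torsor of liftings ($G_y$ fixes only the level-$n$ truncation, not $\gamma$ or its level-$(n+1)$ truncation), but this does not derail the argument: for affine-space fibers the paper itself (e.g.\ in the proof of Theorem \ref{change-of-variable}, and in the spirit of Theorem \ref{keycoro}) invokes Theorem \ref{fund2} after checking only a non-equivariant isomorphism of each fiber with $\mathbb{A}^{d}_{\kappa(y)}$.
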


Recall that a subset $\mathscr A$ of $\Gr(\mathfrak X)$ is {\it cylindrical of level} $n$ if $\mathscr A=\pi_n^{-1}(C)$ with $C$ a constructible subset of $\Gr_n(X_n)$. Assume that $\mathfrak X$ is a stft formal $R$-scheme endowed with good $G$-action. If $C$ is a $G$-invariant constructible subset of $\Gr_n(X_n)$, then $\mathscr A=\pi_n^{-1}(C)$ is a $G$-invariant cylinder of $\Gr(\mathfrak X)$. If $\mathfrak X$ is as in Proposition \ref{G-transition}, then a $G$-invariant cylinder $\mathscr A$ of $\Gr(\mathfrak X)$ is said to be {\it stable of level} $n$ if it is $G$-invariant cylindrical of level $n$, and for every $n\leq m$ in $\mathbb N$, the $\mathfrak X_s$-morphism $\pi_m(\Gr(\mathfrak X))\to \pi_n(\Gr(\mathfrak X))$ is a $G$-equivariant piecewise trivial fibration over $\pi_m(\mathscr A)$ with fiber $\mathbb A_{\mathfrak X_s}^{(m-n)d}$. A $G$-invariant cylinder $\mathscr A$ is {\it stable} if it is stable of some level. Let $\mathbf C_{\mathfrak X}^G$ be the set of stable $G$-invariant cylinders of $\Gr(\mathfrak X)$, where $G$ is a finite group $k$-scheme.  

\begin{proposition}\label{measure}
Let $\mathfrak X$ be a flat quasi-compact stft formal $R$-scheme of relative dimension $d$ endowed with good $G$-action. Then there exists a unique additive mapping
$$\mu_{\mathfrak X}^G\colon \mathbf C_{\mathfrak X}^G\to \mathscr M_{\mathfrak X_s}^G$$ 
such that for any $G$-invariant cylinder $\mathscr A\subseteq \Gr(\mathfrak X)$ stable of level $n$,
$$\mu_{\mathfrak X}^G(\mathscr A)=[\pi_n(\mathscr A)\to \mathfrak X_s]\L^{-(n+1)d}$$ 
\end{proposition}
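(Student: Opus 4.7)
The plan is to define $\mu_{\mathfrak X}^G$ on $G$-invariant stable cylinders by the prescribed formula, verify that the value is independent of the chosen level of stability, and then check additivity on $\mathbf C_{\mathfrak X}^G$; uniqueness will be automatic, because every element of $\mathbf C_{\mathfrak X}^G$ is stable at some level, so the value on it is forced by the formula. The engine of both well-definedness and additivity is that $G$-equivariant piecewise trivial fibrations with fiber $\mathbb A^{(m-n)d}_{\mathfrak X_s}$ produce the multiplier $\L^{(m-n)d}$ in the equivariant Grothendieck ring $K_0^G(\Var_{\mathfrak X_s})$.

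First I would verify level-independence. Let $\mathscr A$ be stable of level $n$ and let $m \geq n$. Writing $\mathscr A = \pi_n^{-1}(C)$ for a $G$-invariant constructible $C \subset \Gr_n(X_n)$, Proposition \ref{action-on-Greenberg} ensures that $\pi_n^m$ is $G$-equivariant, so $\mathscr A = \pi_m^{-1}((\pi_n^m)^{-1}(C))$ is $G$-invariant cylindrical of level $m$. By the defining stability condition, the restricted $\mathfrak X_s$-morphism $\pi_m(\mathscr A) \to \pi_n(\mathscr A)$ is a $G$-equivariant piecewise trivial fibration with fiber $\mathbb A^{(m-n)d}_{\mathfrak X_s}$. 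Stratifying $\pi_n(\mathscr A)$ into finitely many $G$-invariant locally closed $\mathfrak X_s$-subschemes over which this fibration trivializes $G$-equivariantly, and applying the scissor and product relations of $K_0^G(\Var_{\mathfrak X_s})$ stratum by stratum, I obtain
$$[\pi_m(\mathscr A)\to \mathfrak X_s] = [\pi_n(\mathscr A)\to \mathfrak X_s]\cdot \L^{(m-n)d}.$$
Multiplying by $\L^{-(m+1)d}$ recovers $[\pi_n(\mathscr A)\to\mathfrak X_s]\,\L^{-(n+1)d}$, so the proposed value does not depend on the level. The same argument shows that $\mathscr A$ stable at level $n$ is stable at every $m \geq n$, so level-independence in the formula is meaningful.

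Next I would check additivity. Let $\mathscr A_1, \mathscr A_2 \in \mathbf C_{\mathfrak X}^G$ be disjoint, stable at levels $n_1$ and $n_2$; set $n=\max(n_1,n_2)$. By the previous step both $\mathscr A_i$ are stable of level $n$. Writing $\mathscr A_i = \pi_n^{-1}(C_i)$ with $C_i := \pi_n(\mathscr A_i) \subset \pi_n(\Gr(\mathfrak X))$, the identity $\pi_n^{-1}(C_1\cap C_2) = \mathscr A_1 \cap \mathscr A_2 = \emptyset$ combined with $C_i \subset \pi_n(\Gr(\mathfrak X))$ forces $C_1 \cap C_2 = \emptyset$; consequently $\mathscr A_1 \sqcup \mathscr A_2 = \pi_n^{-1}(C_1 \sqcup C_2)$ is a $G$-invariant stable cylinder of level $n$ with image $C_1 \sqcup C_2$. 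The scissor relation in $K_0^G(\Var_{\mathfrak X_s})$ gives $[C_1 \sqcup C_2 \to \mathfrak X_s] = [C_1 \to \mathfrak X_s] + [C_2 \to \mathfrak X_s]$, and multiplication by $\L^{-(n+1)d}$ yields the additivity $\mu_{\mathfrak X}^G(\mathscr A_1 \sqcup \mathscr A_2) = \mu_{\mathfrak X}^G(\mathscr A_1) + \mu_{\mathfrak X}^G(\mathscr A_2)$. Uniqueness then follows since the defining formula prescribes $\mu_{\mathfrak X}^G$ on every element of $\mathbf C_{\mathfrak X}^G$.

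The main obstacle is arranging the piecewise trivialization of $\pi_m(\mathscr A)\to \pi_n(\mathscr A)$ so that the strata, the trivializing isomorphisms, and the projections to $\mathfrak X_s$ are all simultaneously $G$-equivariant, so that the resulting identity really lives in $K_0^G(\Var_{\mathfrak X_s})$ rather than merely in $K_0(\Var_k)$. This is exactly the content that Theorem \ref{fund2} and Theorem \ref{keycoro} are engineered to provide: they characterize $G$-equivariant piecewise trivial fibrations by a uniform fiber over all residue fields, and here that uniform fiber is $\mathbb A^{(m-n)d}_{\mathfrak X_s}$ with the trivial $G$-action, which is precisely the geometric input supplied by the stability hypothesis and by Proposition \ref{G-transition} in the smooth setting.
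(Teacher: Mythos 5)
Your proof is correct and follows essentially the same route as the paper: level-independence of the formula comes from the $G$-equivariant piecewise trivial fibration $\pi_m(\mathscr A)\to\pi_n(\mathscr A)$ with fiber $\mathbb A_{\mathfrak X_s}^{(m-n)d}$ (which the paper extracts via Theorem \ref{fund2}, while you read it off directly from the stability definition and the scissor/product relations in $K_0^G(\Var_{\mathfrak X_s})$), and additivity then reduces to the scissor relation after passing to a common level, exactly as in the paper's appeal to Sebag's non-equivariant argument together with the stability of intersections.
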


\begin{proof}
For $\mathscr A\subseteq \Gr(\mathfrak X)$ being a $G$-invariant cylinder stable of level $n$, it is a $G$-invariant cylinder stable of any level $m\geq n$. Applying Theorem \ref{keycoro} we have 
$$[\pi_m(\mathscr A)\to \mathfrak X_s]=[\pi_n(\mathscr A)\to \mathfrak X_s] \L^{(m-n)d}$$ 
in $\mathscr M_{\mathfrak X_s}^G$. It implies that $[\pi_m(\mathscr A)\to \mathfrak X_s] \L^{-(m+1)d}$ is independent of $m\geq n$, and we define 
$$\mu_{\mathfrak X}^G(\mathscr A):=[\pi_n(\mathscr A)\to \mathfrak X_s] \L^{-(n+1)d}.$$	
Forgetting the action it is shown in \cite[Prop. 4.3.13]{Se} the additivity of $\mu_{\mathfrak X}^G$. For such an $\mathscr A$, if there exist $G$-invariant cylinders $\mathscr A'$ and $\mathscr A''$ stable of level $\geq n$ such that $\mathscr A=\mathscr A'\cup \mathscr A''$, then $\mathscr A'\cap \mathscr A''$ is also a $G$-invariant stable cylinder of level $\geq n$. Therefore, $\mu_{\mathfrak X}^G$ is also additive in the $G$-action setting.	
\end{proof}

\begin{definition}\label{def-stft}
Let $G$ be a finite group $k$-scheme. For any $\mathscr A$ in $\mathbf C_{\mathfrak X}^G$ and any function $\alpha\colon \mathscr A\to \mathbb Z \cup \{\infty\}$, we say that $\L^{-\alpha}$ is {\it naively $G$-integrable}, or that $\alpha$ is {\it naively exponentially $G$-integrable}, if $\alpha$ takes only finitely many values in $\mathbb Z$ and if all the fibers of $\alpha$ are in $\mathbf C_{\mathfrak X}^G$ (such a function $\alpha$ is said to be {\it simple}). We define the {\it motivic $G$-integral} of $\L^{-\alpha}$ on $\mathscr A$ as follows
$$ \int_{\mathscr A}\L^{-\alpha}d\mu_{\mathfrak X}^G:=\sum_{n\in\mathbb Z}\mu_{\mathfrak X}^G(\alpha^{-1}(n))\L^{-n}\ \in \mathscr M_{\mathfrak X_s}^G.$$
\end{definition}

\begin{remark}
When considering the version without $G$-action, i.e., $\mathfrak X$ is endowed with trivial $G$-action, we shall write simply $\mu$ (resp. $\int_{\mathscr A}\L^{-\alpha} d\mu$) in stead of $\mu_{\mathfrak X}^G$ (resp. $\int_{\mathscr A}\L^{-\alpha} d\mu_{\mathfrak X}^G$).	
\end{remark}

\subsection{Change of variables formula}
Let $\mathfrak X$ be a quasi-compact generically smooth flat stft formal $R$-schemes. By \cite[Thm. 8.0.4]{Se}, there exist a smooth quasi-compact stft formal $R$-schemes $\mathfrak Y$ and an $R$-morphism $\mathfrak h\colon \mathfrak Y \to \mathfrak X$ such that $\Gr(\mathfrak Y)\to \Gr(\mathfrak X)$ is a bijection. Let $\mathfrak X$ and $\mathfrak Y$ be endowed with good $G$-actions, and $\mathfrak h\colon \mathfrak Y \to \mathfrak X$ be $G$-equivariant, where $G$ is a finite group $k$-scheme. By Proposition \ref{action-on-Greenberg}, the induced morphisms $\Gr(\mathfrak h)$ and $\Gr_n(h_n)$ are $G$-equivariant. In Theorem \ref{change-of-variable} we use the function $\ord_{\varpi}(\det\Jac_{\mathfrak h})\colon \Gr(\mathfrak X)\setminus \Gr(\mathfrak X_{\sing}) \to \mathbb N\cup \{\infty\}$ defined in \cite[Sect. 4]{NS1}. Here, we denote by $\mathfrak X_{\sing}$ the closed formal subscheme of $\mathfrak X$ on which the structural morphism $\mathfrak X\to \Spf R$ is non-smooth. By abuse of notation, the symbol $\L$ will stand for both $\L_{\mathfrak X_s}$ and $\L_{\mathfrak Y_s}$. 

\begin{theorem}\label{change-of-variable}
Let $\mathfrak X$ and $\mathfrak Y$ be quasi-compact flat stft formal $R$-schemes endowed with good $G$-actions, purely of the same relative dimension. Assume that $\mathfrak X$ is generically smooth and $\mathfrak Y$ is smooth over $R$. Let $\mathfrak h\colon \mathfrak Y \to \mathfrak X$ be a $G$-equivariant morphism of formal $R$-schemes such that $\mathfrak h_{\eta}$ is \'etale and $\mathfrak Y_{\eta}(K^{sh})=\mathfrak X_{\eta}(K^{sh})$. If $\alpha$ is a naively exponentially $G$-integrable function on $\Gr(\mathfrak X)\setminus \Gr(\mathfrak X_{\sing})$, so is $\alpha\circ \Gr(\mathfrak h)+\ord_{\varpi}(\det\Jac_{\mathfrak h})$ on $\Gr(\mathfrak Y)$. Moreover, in $\mathscr M_{\mathfrak X_s}^G$,
$$\int_{\Gr(\mathfrak X)\setminus \Gr(\mathfrak X_{\sing})}\L^{-\alpha}d\mu_{\mathfrak X}^G=(\mathfrak h_s)_!\int_{\Gr(\mathfrak Y)}\L^{-\alpha\circ \Gr(\mathfrak h)-\ord_{\varpi}(\Jac_{\mathfrak h})}d\mu_{\mathfrak Y}^G.$$
\end{theorem}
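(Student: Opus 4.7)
The plan is to adapt Sebag's proof of the non-equivariant change of variables formula \cite[Thm.~8.0.5]{Se}, carrying the $G$-action through each step and invoking Theorem \ref{keycoro} whenever an identity of Grothendieck classes is needed. Write $d$ for the common relative dimension and set $\beta:=\alpha\circ \Gr(\mathfrak h)+\ord_{\varpi}(\det\Jac_{\mathfrak h})$. Because $\mathfrak h$ is $G$-equivariant, Proposition \ref{action-on-Greenberg}(iii) makes $\Gr(\mathfrak h)$ $G$-equivariant and the level sets $\Gr(\mathfrak Y)^{(e)}:=\ord_{\varpi}(\det\Jac_{\mathfrak h})^{-1}(e)$ $G$-invariant; by \cite[Sect.~4]{NS1} they are stable cylinders, so they lie in $\mathbf C_{\mathfrak Y}^G$. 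Combined with $G$-invariance of the level sets of $\alpha$, this shows every fiber of $\beta$ is a $G$-invariant stable cylinder. The finiteness of the range of $\beta$ comes from the finite range of $\alpha$ together with the boundedness of $\ord_{\varpi}(\det\Jac_{\mathfrak h})$ on $\Gr(\mathfrak Y)$, a consequence of $\mathfrak Y$ being quasi-compact and $\mathfrak h_{\eta}$ \'etale. Thus $\beta$ is naively exponentially $G$-integrable.

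The key step is to compute, for each pair $(a,e)$ with $\alpha^{-1}(a)\neq \emptyset$, the motivic measures of
$$B_{a,e}:=\Gr(\mathfrak h)^{-1}\!\left(\alpha^{-1}(a)\right)\cap \Gr(\mathfrak Y)^{(e)}\quad\text{and}\quad A_{a,e}:=\Gr(\mathfrak h)(B_{a,e})\subseteq \Gr(\mathfrak X)\setminus \Gr(\mathfrak X_{\sing}).$$
The hypotheses ($\mathfrak h_{\eta}$ \'etale, $\mathfrak Y_{\eta}(K^{sh})=\mathfrak X_{\eta}(K^{sh})$, $\mathfrak Y$ smooth) force $\Gr(\mathfrak h)$ to be bijective onto its image, so $A_{a,e}$ is a $G$-invariant stable cylinder. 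Sebag's classical analysis shows that for $n\gg e$ the truncation $\Gr_n(h_n)\colon \pi_n(B_{a,e})\to \pi_n(A_{a,e})$ is surjective and every geometric fiber over a point $y$ is canonically an affine space $\mathbb A^e_{\kappa(y)}$, the linear structure being read off from the $\varpi$-adic expansion of the Jacobian of $h_n$; the functoriality of the Greenberg construction recorded in Proposition \ref{action-on-Greenberg}(iii) ensures this canonical identification intertwines the stabilizer $G_y$-action. Applying Theorem \ref{keycoro} therefore yields
$$[\pi_n(B_{a,e})\to \mathfrak X_s]=[\pi_n(A_{a,e})\to \mathfrak X_s]\,\L^e \quad \text{in}\ K_0^G(\Var_{\mathfrak X_s}),$$
where the morphism from $\pi_n(B_{a,e})$ to $\mathfrak X_s$ is obtained by composition with $\mathfrak h_s$.

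Multiplying through by $\L^{-(n+1)d-a-e}$, interpreting both sides via Proposition \ref{measure}, and invoking $\mathscr M_k$-linearity of $(\mathfrak h_s)_!$ converts the preceding identity into $\mu_{\mathfrak X}^G(A_{a,e})\L^{-a}=(\mathfrak h_s)_!\bigl(\mu_{\mathfrak Y}^G(B_{a,e})\L^{-a-e}\bigr)$. Summing over the finitely many pairs $(a,e)$ and unwinding Definition \ref{def-stft} then delivers the claimed formula. The main obstacle I anticipate is verifying the $G_y$-equivariance of Sebag's canonical identification of the fibers of $\Gr_n(h_n)$ with $\mathbb A^e_{\kappa(y)}$: the fiber is cut out by the vanishing of suitable $\varpi$-adic Taylor coefficients of $h_n$ at a lift of $y$, and the affine-space structure on it must be shown to be $G_y$-linear under the action supplied by Proposition \ref{action-on-Greenberg}. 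Granting this, Theorem \ref{keycoro} closes the argument in the equivariant Grothendieck ring exactly as it would in the non-equivariant one.
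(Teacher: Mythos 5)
Your proposal follows essentially the same route as the paper's proof: decompose by the joint level sets of $\alpha\circ\Gr(\mathfrak h)$ and $\ord_{\varpi}(\det\Jac_{\mathfrak h})$, check $G$-invariance of these sets pointwise, use Sebag's computation that the truncated maps have fibers $\mathbb A^e_{\kappa(y)}$, invoke the equivariant fibration criterion (the paper uses Theorem \ref{fund2}, you use its corollary Theorem \ref{keycoro}) to get the class identity with the factor $\L^e$, and sum using bijectivity of $\Gr(\mathfrak h)$. The obstacle you flag at the end is in fact absorbed by that criterion, whose hypothesis only asks for (not necessarily $G_y$-equivariant) isomorphisms of the fibers with affine space, exactly as the paper applies it.
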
 

%\begin{proof}
%The idea is that we adapt the arguments in verifying \cite[Thm. 8.0.5]{Se} and \cite[Thm. 4.3.1]{CNS} with the $G$-action setting. Writing a complete proof seems to be boring, and this is the reason we skip it. The reader can find a detailed proof at arXiv:2206.01005v1.	
%\end{proof}

\begin{proof}
The proof is essentially the same as that of  \cite[Thm. 8.0.5]{Se} or \cite[Thm. 4.3.1]{CNS}, adapted to the equivariant setting. The reader can find a detailed proof at arXiv:2206.01005v1.

\end{proof}

%******************************************

\subsection{Motivic $G$-integral on stft formal schemes of gauge forms}
Let $\mathfrak X$ be a flat stft generically smooth formal $R$-scheme of pure relative dimension $d$, and let $\widetilde\omega$ be a differential form in $\Omega_{\mathfrak X/R}^d(\mathfrak X)$. Let $x$ be in $\Gr(\mathfrak X)\setminus \Gr(\mathfrak X_{\sing})$ which is defined over some field extension $k'$ of $k$. Put $R'=R\widehat{\otimes}_kk'$ and consider the morphism of formal $R$-schemes $\gamma\colon \Spf R' \to \mathfrak X$ corresponding to $x$. Since $\big(\gamma^*\Omega_{\mathfrak X/R}^d\big)/(\text{torsion})$ is a free $\mathcal O_{R'}$-module of rank one, we have either $\gamma^*\widetilde\omega=0$ or $\gamma^*\widetilde\omega=\alpha \varpi^n$ for some nonzero $\alpha\in \mathcal O_{R'}$ and $n\in\mathbb N$. We define $\ord_{\varpi}(\widetilde\omega)(x)=n$ if $\gamma^*\widetilde\omega=\alpha \varpi^n$, and $\ord_{\varpi}(\widetilde\omega)(x)=\infty$ if $\gamma^*\widetilde\omega=0$. Consider the canonical isomorphism shown in \cite[Prop. 1.5]{BLR95} as follows
\begin{align}\label{isodiff}
\Omega_{\mathfrak X/R}^d(\mathfrak X)\otimes_RK \cong \Omega_{\mathfrak X_{\eta}/K}^d(\mathfrak X_{\eta}).
\end{align}

\begin{definition}
A {\it gauge} form $\omega$ on $\mathfrak X_{\eta}$ is a global section of the differential sheaf $\Omega_{\mathfrak X_{\eta}/K}^d$ such that it generates the sheaf at every point of $\mathfrak X_{\eta}$. 
\end{definition}

For any gauge form $\omega$ on $\mathfrak X_{\eta}$, there exist $\widetilde\omega\in \Omega_{\mathfrak X/R}^d(\mathfrak X)$ and $n\in \mathbb N$ such that $\omega=\varpi^{-n}\widetilde{\omega}$, hence we put $\ord_{\varpi,\mathfrak X}(\omega):=\ord_{\varpi}(\widetilde{\omega})-n$, which defines a $\mathbb Z$-value function 
$$\ord_{\varpi,\mathfrak X}(\omega)\colon \Gr(\mathfrak X)\setminus \Gr(\mathfrak X_{\sing})\to\mathbb Z\cup \{\infty\}.$$ 
This definition is independent of the choice of $\widetilde{\omega}$ thanks to \cite[Sect. 4.1]{LS}. 

\begin{lemma}\label{orderomega}
Let $\mathfrak X$ be a flat stft generically smooth formal $R$-scheme of pure relative dimension $d$. Assume that $\mathfrak X$ is endowed with a good $G$-action $\theta\colon G\times_k\mathfrak X\to \mathfrak X$, where $G$ is a finite group $k$-scheme. If $\omega$ is in $\Omega_{\mathfrak X_{\eta}/K}^d(\mathfrak X_{\eta})$ is a gauge form, the function $\ord_{\varpi,\mathfrak X}(\omega)$ is naively exponentially $G$-integrable.
\end{lemma}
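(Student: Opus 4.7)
The assertion has two components: $\ord_{\varpi,\mathfrak X}(\omega)$ takes only finitely many values in $\mathbb Z$, and each non-empty level set belongs to $\mathbf C_{\mathfrak X}^G$ (that is, is a $G$-invariant stable cylinder). Finiteness of the value set and the fact that the level sets are stable cylinders are non-equivariant facts, already contained in the work of Loeser--Sebag \cite{LS}, Sect.~4.1, cited immediately before the lemma in the definition of $\ord_{\varpi,\mathfrak X}(\omega)$. Thus the remaining task is to verify the $G$-invariance of each level set.

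Fix $y\in\Gr(\mathfrak X)\setminus\Gr(\mathfrak X_{\sing})$ with residue field $k'\supseteq k$, corresponding to $\gamma\colon\Spf R'\to\mathfrak X$, and take $g\in G(k')$. By Proposition~\ref{action-on-Greenberg} and the description used in the proof of Theorem~\ref{change-of-variable}, the point $g\cdot y$ corresponds to $g\cdot\gamma=\theta\circ(g(k')\times\gamma)$, which after base change to $k'$ equals $\theta_g\circ\gamma_{k'}$ for the $k'$-semilinear automorphism $\theta_g$ of $\mathfrak X_{k'}$ induced by $g$. Writing $\omega=\varpi^{-m}\tilde\omega$ with $\tilde\omega\in\Omega^d_{\mathfrak X/R}(\mathfrak X)$, functoriality of pullback yields $(g\cdot\gamma)^*\tilde\omega=\gamma^*\theta_g^*\tilde\omega$, so the problem reduces to the pointwise identity $\ord_\varpi(\gamma^*\theta_g^*\tilde\omega)=\ord_\varpi(\gamma^*\tilde\omega)$ for every such jet $\gamma$.

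Write $\theta_g^*\omega=u_g\omega$ with $u_g\in\mathcal O_{\mathfrak X_\eta}^\times$, so that $\theta_g^*\tilde\omega=c^m u_g\tilde\omega$, where $c\in R^\times$ satisfies $\sigma_g^*\varpi=c\varpi$. The plan is to exploit the finite order $N=|G|$ of $g$: the cocycle relation $\prod_{i=0}^{N-1}\theta_g^{i*}u_g=1$, combined with the fact that $\sigma$ preserves the $\varpi$-adic filtration, forces the $\varpi$-exponent in any local decomposition $u_g=\varpi^av_g$ (with $v_g$ a formal unit) to satisfy $Na=0$, hence $a=0$; thus $u_g$ is in fact a unit in $\mathcal O^\times_{\mathfrak X\setminus\mathfrak X_{\sing}}$. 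Pulling back along $\gamma$ then yields $\gamma^*u_g\in R'^\times$, whence $\ord_\varpi(\gamma^*\theta_g^*\tilde\omega)=\ord_\varpi(\gamma^*\tilde\omega)$, as desired. The hard part will be establishing rigorously the decomposition $u_g=\varpi^av_g$ with $v_g$ a formal unit on $\mathfrak X\setminus\mathfrak X_{\sing}$; this is best carried out on an affine formal chart adapted to the $G$-orbit of the special point $\gamma(\Spec k')$, the existence of which is guaranteed by the goodness of the action.
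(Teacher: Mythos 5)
Your reduction of the problem to the pointwise identity $\ord_{\varpi}(\gamma^*\theta_g^*\widetilde\omega)=\ord_{\varpi}(\gamma^*\widetilde\omega)$ agrees with what the paper does (its proof simply asserts that $\gamma^*\widetilde\omega$ and $\gamma'^*\widetilde\omega$ ``have the same order in $\varpi$''), and the non-equivariant inputs from Loeser--Sebag are quoted correctly. The gap is in the step where you deduce from the cocycle relation $\prod_{i=0}^{N-1}\theta_g^{i*}u_g=1$ that $u_g$ is a unit of $\mathcal O_{\mathfrak X\setminus\mathfrak X_{\sing}}$. First, evaluating that relation along the $g$-orbit of a point only tells you that the \emph{sum} of the local $\varpi$-exponents of $u_g$ at the points $x,\theta_g(x),\dots,\theta_g^{N-1}(x)$ vanishes; writing ``$Na=0$'' tacitly assumes the exponent is the same at every point of the orbit, which is essentially what you are trying to prove. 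Second, and more seriously, the local decomposition $u_g=\varpi^a v_g$ with $v_g$ a formal unit need not exist, and $u_g$ need not be a unit of the formal model at all. Take $\mathfrak X=\Spf\bigl(R\{x,y\}/(xy-\varpi)\bigr)$ with the good action of the constant group $\mathbb Z/2$ swapping $x$ and $y$ (trivial action on $R$), and $\omega=dx$, which is a gauge form on the annulus $\mathfrak X_{\eta}$. Then $\theta_g^*\omega=dy=-(\varpi/x^2)\,dx$, so $u_g=-\varpi x^{-2}$: an invertible analytic function on $\mathfrak X_{\eta}$ whose spectral norm varies between $|\varpi|^{-1}$ and $|\varpi|$, which satisfies $u_g\cdot\theta_g^*u_g=\varpi^2/(xy)^2=1$ but is not of the form $\varpi^a\cdot(\text{unit})$ on any neighbourhood meeting both ends of the annulus.

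This example in fact shows that the conclusion you are aiming at cannot be reached for an arbitrary gauge form: one computes $\ord_{\varpi,\mathfrak X}(dx)(\gamma)=0$ on the cylinder $\{\ord\gamma_x=0\}$ and $=1$ on $\{\ord\gamma_x=1\}$, and the involution swaps these two cylinders, so the fibres of $\ord_{\varpi,\mathfrak X}(dx)$ are not $G$-invariant. The identity that does hold in general is $\ord_{\varpi,\mathfrak X}(\omega)(g\cdot x)=\ord_{\varpi,\mathfrak X}(\theta_g^*\omega)(x)$, because pullback along the $\sigma_g$-semilinear automorphism $\theta_g$ preserves valuations in the rank-one modules $\gamma^*\Omega^d_{\mathfrak X/R}/(\text{torsion})$; to get $G$-invariance of the level sets one needs a compatibility between $\omega$ and the action, e.g.\ $\theta_g^*\omega=u_g\omega$ with $u_g\in\mathcal O_{\mathfrak X}^{\times}$, as is the case for the forms $\omega(n)$ on $\mathfrak X(n)$ used later in the paper. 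So the missing ingredient is not a cleverer use of the cocycle relation but an additional hypothesis on $\omega$; as things stand, neither your argument nor the paper's unproved assertion closes this step.
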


\begin{proof}
The first part is proved as in \cite[Thm.-Def. 4.1.2]{LS}. Assume that there is an open dense smooth formal subscheme $\mathfrak Y$ of $\mathfrak X$ such that $\mathfrak Y_{\eta}$ is an open rigid subspace of $\mathfrak X_{\eta}$ and $\mathfrak Y(R^{sh})\to \mathfrak X_{\eta}(K^{sh})$ is a bijection, i.e. $\mathfrak Y$ is a weak N\'eron model of $\mathfrak X_{\eta}$. The module $\Omega_{\mathfrak Y/R}^d$ is locally free of rank one over $\mathcal O_{\mathfrak Y}$ because of the smoothness of $\mathfrak Y$, so there is a open covering $\{\mathfrak U_i\}$ of $\mathfrak Y$ such that $\Omega_{\mathfrak Y/R}^d(\mathfrak U_i)$ is free of rank one. Hence, for every $i$, there is an $f_i$ in $\mathcal O_{\mathfrak Y}(\mathfrak U_i)$ such that 
\begin{align*}
\widetilde{\omega}\mathcal O_{\mathfrak Y}(\mathfrak U_i)\otimes(\Omega_{\mathfrak Y/R}^d(\mathfrak Y))^{-1} \cong (f_i)\mathcal O_{\mathfrak Y}(\mathfrak U_i),
\end{align*}
in which we use the expression $\omega=\varpi^{-n}\widetilde{\omega}$. It implies that the restriction of the function $\ord_{\varpi}(\widetilde{\omega})$ to $\mathfrak U_i$ is equal to $\ord_{\varpi}(f_i)$ which assigns $\ord_{\varpi}(f_i(u))$ to a point $u\in \Gr(\mathfrak U_i)$. Let $f$ be the global section of $\mathcal O_{\mathfrak Y}$ such that $f=f_i$ on $\mathfrak U_i$. By glueing $\ord_{\varpi}(f_i)$'s altogether we get a function 
$$\ord_{\varpi}(f)\colon \Gr(\mathfrak Y)\to\mathbb Z \cup \{\infty\},$$ 
whose restriction to $\Gr(\mathfrak X)\setminus \Gr(\mathfrak X_{\sing})\subseteq \Gr(\mathfrak X)=\Gr(\mathfrak Y)$ coincides with $\ord_{\varpi}(\widetilde{\omega})$. Since $f$ induces an invertible function on $\mathfrak X_{\eta}$, by the Maximum Modulus Principle (see \cite{BLR95}), $\ord_{\varpi}(f)$ has only finitely many values, and so does $\ord_{\varpi, \mathfrak X}(\omega)$.  

As in the proof of \cite[Thm.-Def. 4.1.2]{LS}, the fibers of $\ord_{\varpi}(\widetilde{\omega})=\ord_{\varpi}(f)$ are stable cylinders. As shown below, they are $G$-invariant. Let $x$ be a point of $\Gr(\mathfrak X)\setminus \Gr(\mathfrak X_{\sing})$, which we assume to have residue field $k'$. Put $R'=R\widehat{\otimes}_kk'$. Let $\gamma\colon \Spf R' \to \mathfrak X$ be the morphism corresponding to $x$. For every $g$ in $G$, we denote by $g':=g(k')$ the corresponding element in $G(k')$. Then the point $g\cdot x$ of $\Gr(\mathfrak X)$ corresponds to the morphism $\gamma':=g'\cdot\gamma=\theta \circ (g'\times \gamma)$ from $\Spf R'$ to $\mathfrak X$. Since $g'\in G(k')$, we can prove that $g\cdot x$ is also in $\Gr(\mathfrak X)\setminus \Gr(\mathfrak X_{\sing})$ and that $\gamma^*\widetilde\omega$, $\gamma'^*\widetilde\omega$ have the same order in $\varpi$. Hence two points $x$ and $g\cdot x$ belong to a same fiber of $\ord_{\varpi}(\widetilde{\omega})$, i.e. the function $\ord_{\varpi,\mathfrak X}(\omega)$ is naively exponentially $G$-integrable.
\end{proof}

%The following definition is supported by Lemma \ref{orderomega} and \cite[Sec. 6]{NS1}.

\begin{definition}\label{int_stft}
Let $\mathfrak X$ be a flat stft generically smooth formal $R$-scheme of pure relative dimension $d$ endowed with good $G$-action. For any gauge form $\omega$ on $\mathfrak X_{\eta}$, the integral
$$\int_{\mathfrak X}|\omega|:=\int_{\Gr(\mathfrak X)\setminus \Gr(\mathfrak X_{\sing})}\L^{-\ord_{\varpi, \mathfrak X}(\omega)}d\mu_{\mathfrak X}^G\ \in \mathscr M_{\mathfrak X_s}^G$$
is called the {\it motivic $G$-integral of $\omega$ on $\mathfrak X$}, and the integral
$$\int_{\mathfrak X_{\eta}}|\omega|:=\int_{\mathfrak X_s}\int_{\mathfrak X}|\omega|\ \in \mathscr M_k^G$$
is called the {\it motivic $G$-integral of $\omega$ on $\mathfrak X_{\eta}$}.	
\end{definition}

\begin{remark}\label{remark1}
The proof of Lemma \ref{orderomega} still works for any differential form $\omega\in \Omega_{\mathfrak X_{\eta}/K}^d(\mathfrak X_{\eta})$, which is not necessarily a gauge form, provided that $\ord_{\varpi,\mathfrak X}(\omega)$ has only finitely many values. Then the integral $\int_{\mathfrak X}|\omega|$ in Definition \ref{int_stft} is also well defined for the form $\omega$. 	
\end{remark}

\begin{lemma}\label{stft-connected}
Let $\mathfrak X$ be a flat smooth stft formal $R$-scheme of pure relative dimension $d$ endowed with good $G$-action. Let $\omega$ be a gauge form on $\mathfrak X_{\eta}$. Assume $\mathfrak X$ has connected components $\mathfrak X_i$, $i\in I$ (with $I$ finite). Then, for $i\in I$, the function $\ord_{\varpi,\mathfrak X_i}(\omega):=\ord_{\varpi,\mathfrak X_i}(\omega|_{(\mathfrak X_i)_{\eta}})$ is constant on $\Gr(\mathfrak X_i)$. If $(\mathfrak X_i)_s$ is $G$-invariant for every $i\in I$, then
$$\int_{\mathfrak X}|\omega|=\L^{-d}\sum_{i\in I}\left[(\mathfrak X_i)_s\hookrightarrow \mathfrak X_s\right] \L^{-\ord_{\varpi,\mathfrak X_i}(\omega)}$$
in $\mathscr M_{\mathfrak X_s}^G$. Denote by $\ord_C(\omega)$ the constant $\ord_{\varpi,\mathfrak X_i}(\omega)$ if $C$ is a connected component of $(\mathfrak X_i)_s$. Let $\mathcal C(\mathfrak X_s)$ be a family of constructible subsets of $\mathfrak X_s$ such that
\begin{itemize}
	\item $\mathfrak X_s=\bigcup_{C\in \mathcal C(\mathfrak X_s)}C$, in which $C\cap C'=\emptyset$ if $C, C'\in \mathcal C(\mathfrak X_s)$ and $C\not= C'$;
	
	\item every $C\in \mathcal C(\mathfrak X_s)$ is a union of connected components $C_1 ,\dots, C_r$ of $\mathfrak X_s$ such that $\ord_{C_1}(\omega)=\cdots=\ord_{C_r}(\omega)$ (denote this number by $\ord_C(\omega)$) and $C$ is $G$-invariant.
\end{itemize}
Then the following holds in $\mathscr M_{\mathfrak X_s}^G$:
$$\int_{\mathfrak X}|\omega|=\L^{-d}\sum_{C\in \mathcal C(\mathfrak X_s)}\left[ C\hookrightarrow \mathfrak X_s\right] \L^{-\ord_C(\omega)}.$$	
\end{lemma}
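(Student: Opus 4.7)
The plan is to reduce to a pointwise computation on each connected component $\mathfrak X_i$ and then assemble via the additivity of the measure $\mu_{\mathfrak X}^G$. The first task is to establish constancy of $\ord_{\varpi, \mathfrak X_i}(\omega)$ on $\Gr(\mathfrak X_i)$. Since $\mathfrak X$ is smooth of relative dimension $d$, the sheaf $\Omega^d_{\mathfrak X/R}$ is locally free of rank one. Writing $\omega = \varpi^{-n}\widetilde\omega$ with $\widetilde\omega \in \Omega^d_{\mathfrak X/R}(\mathfrak X)$, on an affine open $\mathfrak U = \Spf A$ of $\mathfrak X_i$ on which $\Omega^d_{\mathfrak X/R}$ admits a generator $\eta$, we have $\widetilde\omega|_{\mathfrak U} = f\eta$ for some $f \in A$. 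Because $\omega$ is a gauge form, $\varpi^{-n}f$ is a unit of $A \otimes_R K$; flatness of $A$ over $R$ together with reducedness of $A/\varpi A$ (from smoothness) would let me deduce $f = \varpi^m u$ for some $u \in A^\times$, so that $\ord_{\varpi,\mathfrak X}(\omega)|_{\Gr(\mathfrak U)} = m - n$ is constant. Local constants agree across overlaps because two generators of $\Omega^d_{\mathfrak X/R}$ differ by a unit; connectedness of $\mathfrak X_i$ then yields a single value $e_i := \ord_{\varpi,\mathfrak X_i}(\omega) \in \mathbb Z$.

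Evaluation of the integral is then routine. Smoothness forces $\mathfrak X_{\sing} = \emptyset$ and $\Gr(\mathfrak X) = \bigsqcup_{i \in I} \Gr(\mathfrak X_i)$. Using $\Gr(\mathfrak X_i) = \pi_0^{-1}((\mathfrak X_i)_s)$ together with Proposition~\ref{action-on-Greenberg}, each $\Gr(\mathfrak X_i)$ is a $G$-invariant cylinder of level $0$ under the assumption that $(\mathfrak X_i)_s$ is $G$-invariant; smoothness further makes it stable via Proposition~\ref{G-transition}. Proposition~\ref{measure} gives
$$\mu_{\mathfrak X}^G(\Gr(\mathfrak X_i)) = [(\mathfrak X_i)_s \hookrightarrow \mathfrak X_s] \, \L^{-d},$$
and additivity combined with the previous step produces
$$\int_{\mathfrak X}|\omega| \;=\; \sum_{i \in I} \L^{-e_i}\, \mu_{\mathfrak X}^G(\Gr(\mathfrak X_i)) \;=\; \L^{-d} \sum_{i \in I} [(\mathfrak X_i)_s \hookrightarrow \mathfrak X_s]\, \L^{-e_i},$$
which is the first displayed equality of the lemma.

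For the final formula I would observe that the underlying topological space of the formal scheme $\mathfrak X_i$ equals that of $(\mathfrak X_i)_0 = ((\mathfrak X_i)_s)_{\red}$, hence that of $(\mathfrak X_i)_s$, so the $(\mathfrak X_i)_s$ are exactly the connected components of $\mathfrak X_s$; this reindexing converts the previous formula into the target one. The main technical obstacle is the factorization $f = \varpi^m u$ underlying the order computation. After passing to an \'etale-local model, the statement amounts to the fact that an element of $R\{x_1, \dots, x_d\}$ which becomes a unit in $K\{x_1, \dots, x_d\}$ differs from a unit of $R\{x_1, \dots, x_d\}$ by a power of $\varpi$; this rests on $R\{x_1, \dots, x_d\}$ being a Noetherian domain with $\varpi$ prime. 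Once this is secured, the remainder is standard bookkeeping within the framework of $G$-invariant stable cylinders.
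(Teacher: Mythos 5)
Your proposal is correct in outline and, for the second and third assertions, does exactly what the paper means by ``the rest completely follows from the definition'': decompose $\Gr(\mathfrak X)=\bigsqcup_{i\in I}\Gr(\mathfrak X_i)$ into $G$-invariant stable cylinders of level $0$, apply Proposition~\ref{measure} together with the constancy of the order on each piece, and note that each $(\mathfrak X_i)_s$ has the same underlying space as $\mathfrak X_i$ and hence is connected, so the last display is just a reindexing of the first. For the first assertion the paper only cites \cite[Lem.~6.4]{NS1} and \cite[Prop.~4.3.1]{LS}, and your reconstruction follows the same route: trivialize $\Omega^d_{\mathfrak X/R}$ locally, write $\widetilde\omega=f\eta$, and reduce to showing $f=\varpi^m u$ with $u\in A^\times$.

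That last factorization is the crux, and your stated justification for it is not right. Flatness of $A$ over $R$ together with reducedness of $A/\varpi A$ does \emph{not} suffice: for the formal annulus $A=R\{x,y\}/(xy-\varpi)$, which is flat with reduced special fiber and connected, the element $x$ is invertible in $A\otimes_RK$ (since $x\cdot(y/\varpi)=1$) but is not a unit times a power of $\varpi$, and indeed $\ord_\varpi\gamma^*x$ takes both values $0$ and $1$ on $\Gr(\Spf A)$. Likewise, the passage to an ``\'etale-local model'' does not transport $f$ into $R\{x_1,\dots,x_d\}$, and ``Noetherian domain with $\varpi$ prime'' is not quite enough on its own (one also needs normality to conclude $f/\varpi^m\in A$). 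What actually rescues the argument is the smoothness hypothesis used in full: on a connected affine piece $\Spf A$ of $\mathfrak X_i$, the special fiber $A/\varpi A$ is smooth and connected over $k$, hence integral, so $\varpi$ is prime in $A$; and $A$ is regular, hence a normal domain. Then invertibility of $f$ in $A\otimes_RK=A[1/\varpi]$ gives $fg=\varpi^N$ in $A$, and setting $m$ equal to the $\varpi$-adic valuation of $f$ in the discrete valuation ring $A_{(\varpi)}$, normality yields $u:=f/\varpi^m\in A$ and $g/\varpi^{N-m}\in A$ with $u\cdot(g/\varpi^{N-m})=1$, so $u\in A^\times$. With this correction your argument is complete and self-contained where the paper defers to the literature; the remaining bookkeeping with stable cylinders is as you describe.
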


\begin{proof}
The first statement is the same as in \cite[Lem. 6.4.]{NS1} (see also \cite[Prop. 4.3.1]{LS}) using the hypothesis that $\omega$ is a gauge form on $\mathfrak X_{\eta}$. The rest completely follows from the definition of motivic $G$-integral of a gauge form (cf. Def. \ref{def-stft}, \ref{int_stft}).
\end{proof}

\section{Equivariant motivic integration on special formal schemes}
In this section, $R$ is still a complete discrete valuation ring of equal characteristic, while $K$, $k$ and $\varpi$ respectively denote the fraction field, the residue field and the fixed uniformizing parameter of $R$.

\subsection{Special formal schemes}
Let $R\{x_1,\dots, x_m\}[[y_1,\dots,y_{m'}]]$ be the mixed formal power series $R$-algebra which is the $R\{x_1,\dots, x_m\}$-algebra of formal power series in $y_1,\dots,y_{m'}$. We can prove that $R\{x_1,\dots, x_m\}[[y_1,\dots,y_{m'}]]=R[[y_1,\dots,y_{m'}]]\{x_1,\dots, x_m\}$. A topological $R$-algebra $A$ is called {\it special} if $A$ is a Noetherian adic ring and the $R$-algebra $A/J$ is finitely generated for some ideal of definition $J$ of $A$. By \cite{Ber96}, we have an equivalent definition, which states that a topological $R$-algebra $A$ is special if and only if $A$ is topologically $R$-isomorphic to a quotient the $R$-algebra $R\{x_1,\dots, x_m\}[[y_1,\dots,y_{m'}]]$ for some $m, m'\in\mathbb N^*$.  

\begin{definition}
A {\it special} formal scheme is a separated Noetherian adic formal scheme $\mathfrak X$ that is a finite union of open affine formal schemes of the form $\Spf A$ with $A$ a Noetherian special $R$-algebra. A morphism $\mathfrak{Y}\to\mathfrak{X}$ of special formal $R$-schemes is said to be {\it locally of finite type} if locally it is isomorphic to a morphism of the form $\Spf B\to \Spf A$, where $A\to B$ is a morphism of finite type of Noetherian special $R$-algebras. A morphism $\mathfrak{Y}\to\mathfrak{X}$ of special formal $R$-schemes is {\it (locally) adic} if locally it is isomorphic to a morphism of the form $\Spf B\to \Spf A$, where $A\to B$ is an adic morphism, i.e. there is an ideal of definition $J$ of $A$ such that the topology on $B$ is $J$-adic.
\end{definition}

The category of special formal $R$-schemes admits fiber products, and it contains the category of formal $R$-schemes topologically of finite type as a full subcategory. If $\mathfrak X$ is a special formal $R$-scheme, any formal completion of $\mathfrak X$ is also a special formal $R$-scheme. 

%\begin{remark}
%Let $\mathcal A$ be the category of special formal $R$-schemes whose morphisms are adic morphisms. Then the correspondence $\mathfrak X\mapsto\mathfrak X_0$ from $\mathcal A$ to the category of $k$-schemes is functorial. Furthermore, the natural closed immersion $\mathfrak X_0\to \mathfrak X$ is a homeomorphism. 	
%\end{remark}

In the category of stft formal $R$-schemes every morphism is automatically an adic morphism, but in general this is not true in the category of special formal $R$-schemes. In the rest of this article, we shall always consider {\it adic} morphisms between special formal $R$-schemes $\mathfrak f\colon \mathfrak Y\to \mathfrak X$, because it allows to induce $k$-morphisms at the reduction level $\mathfrak f_0\colon \mathfrak Y_0\to \mathfrak X_0$. For short, from now on, saying morphisms of special formal $R$-schemes we means adic morphisms.

As explained in \cite[0.2.6]{Berthelot} and \cite[Sect. 2.1]{Ni2}, one first considers the affine case $\mathfrak X=\Spf A$, where $A$ is a special $R$-algebra. Denote by $J$ the largest ideal of definition of $A$ and consider for each $n\in \mathbb N^*$ the subalgebra $A\left[\varpi^{-1}J^n\right]$ of $A\otimes_RK$ generated by $A$ and $\varpi^{-1}J^n$. Let $B_n$ be the $J$-adic completion of $A[\varpi^{-1}J^n]$. Then we have the affinoid $K$-algebra $C_n:=B_n\otimes_RK$. The inclusion $J^{n+1}\subseteq J^n$ gives rise naturally to a morphism of affinoid $K$-algebras $C_{n+1}\to  C_n$, which in its turn induces an open embedding of affinoid $K$-spaces $\Spm(C_n)\to \Spm(C_{n+1})$. Then the {\it generic fiber} $\mathfrak X_{\eta}$ of $\mathfrak X$ is defined to be $\varinjlim_{n}\Spm(C_n)=\bigcup_{n\in \mathbb N^*}\Spm(C_n)$. The {\it generic fiber} $\mathfrak X_{\eta}$ of an arbitrary special formal $R$-scheme $\mathfrak X$ can be obtained by a glueing procedure, i.e. one covers $\mathfrak X$ by open affine formal $R$-subscheme $\mathfrak X_i$, $i\in I$, and glues $\mathfrak X_{i\eta}$ into $\mathfrak X_{\eta}$ due to the method introduced in \cite[Prop. 0.2.3]{Berthelot}. In general, $\mathfrak X_{\eta}$ is a rigid $K$-variety, which is separated but not necessarily quasi-compact (cf. \cite{Berthelot}). The correspondence $\mathfrak X\mapsto \mathfrak X_{\eta}$ is a functor from the category of special formal $R$-schemes to the category of separated rigid $K$-varieties. 

Note that the reduction $\mathfrak X_0$ of any Noetherian $\varpi$-adic formal scheme is already mentioned in Section \ref{section3.1}. The {\it specialization map} $\sp\colon \Spf A_{\eta}\to \Spf A$ for the case of affine special formal $R$-schemes is defined as follows. Let $x$ be in $\Spf A_{\eta}$, let $I\subseteq A\otimes_RK$ be the maximal ideal in $A\otimes_RK$ corresponding to $x$, and let $I'=I\cap A$. By construction, $\sp(x)$ is the unique maximal ideal of $A$ containing $\varpi$ and $I'$. If $Z$ is a locally closed subscheme of $(\Spf A)_0$, $\sp^{-1}(Z)$ is an open rigid $K$-subvariety of $(\Spf A)_{\eta}$, which is canonically isomorphic to the generic fiber of the formal completion of $\Spf A$ along $Z$ (cf. \cite[Sect. 7.1]{deJ}). In general, the construction of the specialization map can be generalized to any special formal $R$-scheme $\mathfrak X$ (see \cite{deJ}).

Let $\mathfrak X$ be a special formal scheme of pure relative dimension $d$. Thanks to \cite[Sect. 7]{deJ} and \cite[Sect. 2.1]{Ni2}, we have a natural injective map $\Phi\colon \Omega_{\mathfrak X/R}^d(\mathfrak X)\otimes_RK\to \Omega_{\mathfrak X_{\eta}/K}^d(\mathfrak X_{\eta})$, which factors uniquely through the sheafification map $\Omega_{\mathfrak X/R}^d(\mathfrak X)\otimes_RK \to \big(\Omega_{\mathfrak X/R}^d\otimes_RK\big)(\mathfrak X)$, namely,
\begin{displaymath}
\xymatrix{
	\Omega_{\mathfrak X/R}^d(\mathfrak X)\otimes_RK\ \ar@{^{(}->}[rr]^{\Phi}\ar@{->}[dr]_{\Pi}&&\ \Omega_{\mathfrak X_{\eta}/K}^d(\mathfrak X_{\eta})\\
	&\big(\Omega_{\mathfrak X/R}^d\otimes_RK\big)(\mathfrak X)\ar@{->}[ur]_{\Psi}&
}
\end{displaymath}
If $\mathfrak X$ is a stft formal $R$-scheme, $\Phi$ is an isomorphism (see Eq. (\ref{isodiff})); also, if $\mathfrak X$ is an affine special formal $R$-scheme, the homomorphism $\Pi$ is an isomorphism. In the general case, we always have $\mathrm{Im}(\Phi)=\mathrm{Im}(\Psi)$. A gauge form $\omega$ on $\mathfrak X_{\eta}$ which lies in $\mathrm{Im}(\Phi)$ is called {\it $\mathfrak X$-bounded}. %We now recall Definition 2.11 in \cite{Ni2}.
Clearly, if $\mathfrak X$ is a stft formal $R$-scheme, every gauge form on $\mathfrak X_{\eta}$ is an $\mathfrak X$-bounded gauge form; but this does not hold in general. %The $\mathfrak X$-boundedness is very important for a gauge form $\omega$ on $\mathfrak X_{\eta}$ because it allows to get a well-defined order of $\omega$ with respect to a connected component of $\mathfrak X_0$.  

%Let $G$ be a finite group $k$-scheme. In Definition \ref{action-R} we defined good $G$-action on any formal $R$-scheme. To be more useful in studying special formal $R$-schemes we need a nicer action of $G$ as follows.

\begin{definition}
Let $\mathfrak X$ be a special formal $R$-scheme. An {\it adic} $G$-action on $\mathfrak X$ is a $G$-action $\theta\colon G\times_k\mathfrak X \to \mathfrak X$ such that regarding as a morphism over $\Spf R$, $\theta$ is adic.

Let $\mathfrak X$ and $\mathfrak Y$ be special formal $R$-schemes each of which is endowed with an adic $G$-action. An adic morphism of special formal schemes $\mathfrak f \colon \mathfrak Y\to \mathfrak X$ is called {\it adic $G$-equivariant} if it is compatible with the adic actions $G\times_k\mathfrak X \to \mathfrak X$ and $G\times_k\mathfrak Y \to \mathfrak Y$.
\end{definition}  

Clearly, if $\theta \colon G\times_k\mathfrak X \to \mathfrak X$ is an adic $G$-action on $\mathfrak X$, it induces naturally a $G$-action on $\mathfrak X_0$ which is the $k$-morphism $\theta_0\colon G\times_k\mathfrak X_0 \to \mathfrak X_0$.
%---------------------------------

\subsection{Motivic $G$-integral on special formal schemes}
Let $\mathfrak X$ be a Noetherian adic formal $R$-scheme, let $\mathcal J$ be the largest ideal of definition of $\mathfrak X$, and $\mathcal I$ a coherent ideal sheaf on $\mathfrak X$. The {\it formal blowup of $\mathfrak X$ with center $\mathcal I$} is the morphism of formal schemes
\begin{align}\label{blowup}
\pi\colon \varinjlim_{n\in \mathbb N^*} \Proj\Big(\bigoplus_{m\geq 0}\mathcal I^m\otimes_{\mathcal O_{\mathfrak X}}(\mathcal O_{\mathfrak X}/\mathcal J^n)\Big)\to\mathfrak X.
\end{align}
The formal blowup of $\mathfrak X$ with center $\mathcal I$ is an adic formal $R$-scheme on which the ideal generated by $\mathcal I$ is invertible. The blowup has the universality, it also commutes with flat base change, with the completion of $\mathfrak X$ along a closed $k$-subscheme of $\mathfrak X_s$ (cf. \cite[Prop. 2.16]{Ni2}). If $\mathfrak X$ is a special formal $R$-scheme and $\mathcal I$ is open with respect to the $\varpi$-adic topology, then the blowup $\pi$ is called {\it admissible}. By \cite[Cor. 2.17]{Ni2}, if $\mathfrak Y\to\mathfrak X$ is an admissible blowup, then $\mathfrak Y$ is a special formal $R$-scheme, and if, in addition, $\mathfrak X$ is $R$-flat, so is $\mathfrak Y$. Furthermore, the induced morphism of rigid $K$-varieties $\mathfrak Y_{\eta}\to\mathfrak X_{\eta}$ is an isomorphism due to \cite[Prop. 2.19]{Ni2}. 

\begin{definition}
Let $\mathfrak X$ be a flat special formal $R$-scheme, and $\mathcal I$ a coherent ideal sheaf on $\mathfrak X$ which contains $\varpi$. Let $\pi\colon \mathfrak Y\to \mathfrak X$ be the admissible blowup with center $\mathcal I$. If $\mathfrak U$ is the open formal subscheme of $\mathfrak Y$ where $\mathcal I\mathcal O_{\mathfrak Y}$ is generated by $\varpi$, the restriction $\pi\colon \mathfrak U\to\mathfrak X$ is called {\it the dilatation of $\mathfrak X$ with center $\mathcal I$}. 
\end{definition}

The dilatation of a special formal $R$-scheme $\mathfrak X$ always exists and it is a flat special formal $R$-scheme. Like admissible blowups, dilatations have the universality (cf. \cite[Prop. 2.22]{Ni2}), they commute with the formal completion along closed subschemes (cf. \cite[Prop. 2.21, 2.23]{Ni2}). %In the sequel, we shall use the following important properties, see more in \cite[Prop. 2.22]{Ni2}.

\begin{proposition}[Nicaise \cite{Ni2}]\label{universal}
Let $\mathfrak X$ be a flat special formal $R$-scheme, and let $\mathfrak U\to\mathfrak X$ be the dilatation of $\mathfrak X$ with center $\mathcal I$ containing $\varpi$. If $\mathfrak X' \to \mathfrak X$ is a morphism of flat special formal $R$-schemes such that the induced morphism $\mathfrak X'_s \to \mathfrak X_s$ factors through the closed formal subscheme of $\mathfrak X_s$ defined by $\mathcal I$, there exists a unique morphism of formal $R$-schemes $\mathfrak X' \to \mathfrak U$ that makes the diagram 
\begin{displaymath}
\xymatrix{ &  \mathfrak U \ar[d]^{} \ar@{<--}[dl]_{} \\  \mathfrak X' \ar[r]^{} &  \ \mathfrak X.}
\end{displaymath}
commute. Furthermore, if $\mathcal I$ is open, then $\mathfrak U$ is a stft formal $R$-scheme.
\end{proposition}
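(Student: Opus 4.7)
The plan is to derive the universal property of the dilatation $\mathfrak U \to \mathfrak X$ from that of the admissible formal blowup $\pi\colon \mathfrak Y \to \mathfrak X$ with center $\mathcal I$, of which $\mathfrak U$ is by construction an open subscheme. The strategy has three ingredients: first, show that the pullback $\mathcal I \mathcal O_{\mathfrak X'}$ coincides with $\varpi \mathcal O_{\mathfrak X'}$, which is invertible because $\mathfrak X'$ is $R$-flat; second, invoke the universal property of the formal blowup to produce a unique $\mathfrak X' \to \mathfrak Y$ over $\mathfrak X$; third, verify that this morphism lands in the open subscheme $\mathfrak U$.

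For the first step, the containment $\varpi \in \mathcal I$ gives $\varpi \mathcal O_{\mathfrak X'} \subseteq \mathcal I \mathcal O_{\mathfrak X'}$, while the assumption that the induced morphism $\mathfrak X'_s \to \mathfrak X_s$ factors through the closed formal subscheme of $\mathfrak X_s$ defined by $\mathcal I$ means that $\mathcal I$ vanishes in $\mathcal O_{\mathfrak X'}/\varpi \mathcal O_{\mathfrak X'}$, that is, $\mathcal I \mathcal O_{\mathfrak X'} \subseteq \varpi \mathcal O_{\mathfrak X'}$. Hence $\mathcal I \mathcal O_{\mathfrak X'} = \varpi \mathcal O_{\mathfrak X'}$, and by $R$-flatness of $\mathfrak X'$ the element $\varpi$ is a non-zero-divisor on $\mathcal O_{\mathfrak X'}$, so this ideal is invertible. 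The universal property of the formal admissible blowup recalled in the excerpt from \cite{Ni2} then yields a unique factorization $\mathfrak X' \to \mathfrak Y$ over $\mathfrak X$. Since $\mathfrak U$ is the open subscheme of $\mathfrak Y$ on which $\mathcal I \mathcal O_{\mathfrak Y}$ is generated by the image of $\varpi$, and the composition pulls $\mathcal I \mathcal O_{\mathfrak Y}$ back to $\varpi \mathcal O_{\mathfrak X'}$, the image of $\mathfrak X' \to \mathfrak Y$ is set-theoretically contained in $\mathfrak U$; the morphism therefore factors uniquely through the open immersion $\mathfrak U \hookrightarrow \mathfrak Y$.

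For the final claim, assume $\mathcal I$ is open and let $\mathcal J$ denote the largest ideal of definition of $\mathfrak X$; then $\varpi \in \mathcal J$ because $\mathfrak X$ is a special formal $R$-scheme (so $\varpi$ is topologically nilpotent), and openness of $\mathcal I$ gives $\mathcal J^k \subseteq \mathcal I$ for some $k \geq 1$. Pulling back to $\mathfrak U$ and using $\mathcal I \mathcal O_{\mathfrak U} = \varpi \mathcal O_{\mathfrak U}$ yields $(\mathcal J \mathcal O_{\mathfrak U})^k \subseteq \varpi \mathcal O_{\mathfrak U} \subseteq \mathcal J \mathcal O_{\mathfrak U}$, so the $\varpi$-adic and $\mathcal J \mathcal O_{\mathfrak U}$-adic topologies on $\mathcal O_{\mathfrak U}$ coincide and $\varpi \mathcal O_{\mathfrak U}$ is an ideal of definition of $\mathfrak U$. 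Since $\mathfrak U$ is a special (hence Noetherian adic) $R$-scheme, on each affine chart $\Spf A \subseteq \mathfrak U$ the quotients $A/\mathcal J^n$ are finitely generated over $R$, and in particular $A/\varpi A$ (a quotient of some $A/\mathcal J^n$) is finitely generated over $k$; combined with $\varpi$-adic completeness this makes $\mathfrak U$ topologically of finite type over $R$, and the separatedness inherited from $\mathfrak X$ gives the stft property. The main obstacle is the second step, the universal property of the formal admissible blowup; this is however precisely what \cite[Prop.~2.16--2.19]{Ni2} provides in the special formal setting, so it can be cited directly, and the remaining content reduces to the ideal-theoretic bookkeeping above.
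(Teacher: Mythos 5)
The paper gives no proof of this proposition; it is quoted verbatim from Nicaise \cite[Prop.\ 2.22]{Ni2} and used as a black box, so the only meaningful comparison is with the standard argument there, which your proposal reproduces faithfully: identify $\mathcal I\mathcal O_{\mathfrak X'}$ with the invertible ideal $\varpi\mathcal O_{\mathfrak X'}$, factor through the admissible blowup by its universal property, and check the factorization lands in the open locus $\mathfrak U$. The one step you state but do not verify is that landing in $\mathfrak U$ is genuinely a pointwise condition on $\mathfrak Y$: if $\mathcal I\mathcal O_{\mathfrak Y,y}=(g)$ and $\varpi=ag$, then $(g)\mathcal O_{\mathfrak X',x'}=(\varpi)\mathcal O_{\mathfrak X',x'}$ forces $a$ to be a unit in $\mathcal O_{\mathfrak X',x'}$ (using that $g$ is a nonzerodivisor there, since $\varpi$ is by flatness), hence a unit in $\mathcal O_{\mathfrak Y,y}$ because a local homomorphism cannot send a non-unit to a unit; this is routine and does not affect correctness. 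Your treatment of the stft claim (squeezing $\varpi\mathcal O_{\mathfrak U}$ between $\mathcal J\mathcal O_{\mathfrak U}$ and $(\mathcal J\mathcal O_{\mathfrak U})^k$ to make it an ideal of definition with finite-type reduction) is likewise correct.
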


Let $G$ be a finite group $k$-scheme. Due to Definition \ref{action-k}, a $G$-action on $\mathfrak X$ is a $G$-action on $\mathfrak X$ when viewed as a formal $k$-scheme such that the structural morphism is $G$-equivariant. 

\begin{proposition}\label{G-dilatation}
Let $\mathfrak X$ be a flat special formal $R$-scheme endowed with a good adic $G$-action $\theta$. Let $\pi\colon \mathfrak U\to\mathfrak X$ be the dilatation of $\mathfrak X$ with center $\mathcal I$ containing $\varpi$, and let $\mathfrak Z$ be the closed formal subscheme of $\mathfrak X_s$ defined by $\mathcal I$. Assume that $\mathfrak Z$ is $G$-invariant. Then there exists a good adic $G$-action $\pi^*\theta$ on $\mathfrak U$ such that the dilatation $\pi$ is $G$-equivariant. The formal scheme $\mathfrak U$ together with this action is called the {\bf $G$-dilatation} of $\mathfrak X$
\end{proposition}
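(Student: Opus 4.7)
The plan is to apply the universal property of the dilatation (Proposition~\ref{universal}) to the composition $\theta \circ (\mathrm{Id}_G \times \pi)\colon G \times_k \mathfrak U \to \mathfrak X$ in order to produce the desired morphism $\pi^*\theta \colon G \times_k \mathfrak U \to \mathfrak U$.

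First I would verify the hypotheses of Proposition~\ref{universal}. Since $G$ is a finite group $k$-scheme, locally $G = \Spec B$ with $B$ a finite (hence flat) $k$-algebra, so if $\mathfrak U = \Spf A$ with $A$ special, then $G \times_k \mathfrak U = \Spf(B \otimes_k A)$ is a special formal $R$-scheme, and flatness over $R$ follows from flatness of $\mathfrak U$ over $R$ together with $k$-flatness of $G$. Next I would check that $(G \times_k \mathfrak U)_s \to \mathfrak X_s$ factors through $\mathfrak Z$: by construction of the dilatation, $\mathcal I \mathcal O_{\mathfrak U}$ is generated by $\varpi$, hence $\mathcal I \mathcal O_{\mathfrak U_s} = 0$, so $\pi_s \colon \mathfrak U_s \to \mathfrak X_s$ factors through $\mathfrak Z$; combining this with the assumed $G$-invariance $\theta_s(G \times_k \mathfrak Z) \subseteq \mathfrak Z$ gives the factorization.

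Proposition~\ref{universal} then yields a unique $R$-morphism $\pi^*\theta \colon G \times_k \mathfrak U \to \mathfrak U$ such that
\[
\pi \circ \pi^*\theta = \theta \circ (\mathrm{Id}_G \times \pi),
\]
which is precisely the $G$-equivariance of $\pi$. The action axioms for $\pi^*\theta$ will follow by the uniqueness clause of the same proposition: for the unit axiom, both $\mathrm{Id}_{\mathfrak U}$ and $\pi^*\theta \circ (e_G \times \mathrm{Id}_{\mathfrak U})$ compose with $\pi$ to give $\pi$, using the unit axiom for $\theta$; for associativity, the two morphisms $\pi^*\theta \circ (\mathrm{Id}_G \times \pi^*\theta)$ and $\pi^*\theta \circ (m_G \times \mathrm{Id}_{\mathfrak U})$ compose with $\pi$ to give the same morphism to $\mathfrak X$, by associativity of $\theta$, and both targets factor through $\mathfrak Z$ on special fibers. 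Adicness of $\pi^*\theta$ will follow by pulling back an ideal of definition $\mathcal J_{\mathfrak X}$ of $\mathfrak X$: since $\pi$ is adic, $\pi^{-1}\mathcal J_{\mathfrak X}\mathcal O_{\mathfrak U}$ is an ideal of definition of $\mathfrak U$, while the composition $\theta \circ (\mathrm{Id}_G \times \pi)$ is adic (as both factors are), so $(\pi^*\theta)^{-1}(\pi^{-1}\mathcal J_{\mathfrak X}\mathcal O_{\mathfrak U})\mathcal O_{G \times_k \mathfrak U}$ is an ideal of definition of $G \times_k \mathfrak U$.

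The main obstacle I anticipate is verifying that $\pi^*\theta$ is \emph{good}, since the dilatation $\pi$ need not be affine. For $u \in \mathfrak U$, the orbit $G \cdot u$ is finite (as $G$ is finite), and $\pi(G \cdot u) = G \cdot \pi(u)$ is contained in some affine open $V \subseteq \mathfrak X$ by goodness of $\theta$. The preimage $\pi^{-1}(V)$ is an open subscheme of the formal blowup of $V$ along $\mathcal I|_V$, hence is quasi-projective over the affine $V$; one then finds an affine open of $\pi^{-1}(V)$ containing the finite set $G \cdot u$ by a standard hyperplane-avoidance argument adapted to the formal setting. This provides the required affine open of $\mathfrak U$ containing $G \cdot u$, completing the proof.
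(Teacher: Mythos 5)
Your proof is correct and follows essentially the same route as the paper: apply the universal property of the dilatation (Proposition~\ref{universal}) to $\theta\circ(\Id\times\pi)$, using the $G$-invariance of $\mathfrak Z$ to get the required factorization on special fibers, and then invoke the uniqueness clause to verify the action axioms. You additionally spell out the verifications of adicness and goodness that the paper leaves implicit, and these check out.
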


\begin{proof}
Consider the morphism of formal $k$-schemes $\pi'=\theta\circ (\Id\times\pi)\colon G\times_k\mathfrak U\to  G\times_k\mathfrak X \to  \mathfrak X$. Since $\pi$ is the dilatation with center $\mathcal I$, and since $\mathfrak Z$ is $G$-invariant, it follows that the induced morphism
$\pi'_s\colon  G\times_k\mathfrak U_s\to  G\times_k\mathfrak X_s \to  \mathfrak X_s$ factors through $\mathfrak Z$. Note that $\mathfrak X$ is a formal $R$-scheme and therefore $\pi'$ can be regarded as a morphism of  formal $R$-schemes. Applying Proposition \ref{universal} to $\pi'$ we obtain a unique morphism $\pi^*\theta\colon G\times_k\mathfrak U\to \mathfrak U$ making the diagram
\begin{equation*}
\begin{CD}
G\times_k\mathfrak U @>\pi^*\theta>> \mathfrak U\\
@V\Id\times\pi VV @VV\pi V\\
G\times_k\mathfrak X @>\theta>> \mathfrak X
\end{CD}\end{equation*}
commutes. Similarly, applying Proposition \ref{universal} we can show that $\pi^*\theta$ defines a good adic action of $G$ on $\mathfrak U$, hence the dilatation $\pi$ is $G$-equivariant due to the commutative diagram.
\end{proof}

The below definition is a generalization of \cite[Def. 4.1]{Ni2} to the $G$-equivariant setting.

\begin{definition}
A {\it $G$-N\'eron smoothening} for a special formal $R$-scheme $\mathfrak X$ is a $G$-equivariant morphism of special formal $R$-schemes $\mathfrak Y \to\mathfrak X$, with $\mathfrak Y$ adic smooth over $R$, such that the induced morphism $\mathfrak Y_{\eta} \to \mathfrak X_{\eta}$ is an open embedding satisfying $\mathfrak Y_{\eta}(K^{sh})=\mathfrak X_{\eta}(K^{sh})$, and the induced morphism $\mathfrak Y_{s} \to \mathfrak X_{s}$ factors through $\mathfrak X_{0}$.
\end{definition}

\begin{proposition}\label{G-smoothening}
Let $G$ be a smooth finite group $k$-scheme. Then every flat generically smooth special formal $R$-scheme $\mathfrak X$ endowed with a good adic $G$-action admits a $G$-N\'eron smoothening $\mathfrak h\colon \mathfrak Y \to\mathfrak X$.
\end{proposition}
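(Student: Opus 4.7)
The plan is to reduce to the stft case via the $G$-dilatation of Proposition \ref{G-dilatation}, and then carry out a Néron smoothening procedure $G$-equivariantly, exploiting the fact that all centres of blow-up in the classical algorithm are canonical and hence automatically $G$-invariant.

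\textbf{Step 1: Reduction to the stft case.} Let $\mathcal J$ be the largest ideal of definition of $\mathfrak X$, so $\mathcal J$ defines $\mathfrak X_0$. Since $\mathfrak X_0=(\mathfrak X_s)_{\red}$ is intrinsically defined, the closed subscheme $\mathfrak X_0$ is $G$-invariant. Applying Proposition \ref{G-dilatation} with $\mathcal I=\mathcal J$ yields a $G$-equivariant dilatation $\pi\colon\mathfrak U\to\mathfrak X$, where $\mathfrak U$ is flat, carries a good adic $G$-action, and (since $\mathcal J$ is open) is a stft formal $R$-scheme by Proposition \ref{universal}. A dilatation does not alter the generic fibre, so $\pi_\eta$ is an isomorphism of rigid $K$-varieties and $\mathfrak U$ is generically smooth. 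By the very construction of the dilatation with centre $\mathcal J$, the map $\mathfrak U_s\to\mathfrak X_s$ factors through $\mathfrak X_0$. It therefore suffices to construct a $G$-N\'eron smoothening $\mathfrak h'\colon\mathfrak Y\to\mathfrak U$ and take $\mathfrak h=\pi\circ\mathfrak h'$.

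\textbf{Step 2: $G$-equivariant N\'eron smoothening of $\mathfrak U$.} The classical N\'eron smoothening of a flat generically smooth stft formal $R$-scheme (Bosch--L\"utkebohmert--Raynaud, Sebag \cite{Se}) is produced by a finite sequence of admissible formal blow-ups whose centres are canonical subschemes supported on the singular locus of the successive special fibres, followed by restriction to the smooth locus. Each such centre is intrinsically attached to the geometry, and thus is $G$-invariant under the induced good adic $G$-actions. An equivariant blow-up lemma—obtained by the same universal-property argument that establishes Proposition \ref{G-dilatation}—produces a good adic $G$-action on each successive blow-up making the projection $G$-equivariant. The termination and the open-embedding property of the generic fibres are preserved verbatim from the non-equivariant proof, as these depend only on the underlying morphisms of formal $R$-schemes. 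Finally, the smooth locus of the last blow-up is open; because $G$ is smooth, this locus is $G$-stable, and restricting to it gives a smooth stft formal $R$-scheme $\mathfrak Y$ with a good adic $G$-action and a $G$-equivariant morphism $\mathfrak h'\colon\mathfrak Y\to\mathfrak U$ satisfying $\mathfrak Y_\eta(K^{sh})=\mathfrak U_\eta(K^{sh})$.

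\textbf{Step 3: Conclusion.} The composition $\mathfrak h=\pi\circ\mathfrak h'$ is a $G$-equivariant adic morphism of special formal $R$-schemes. Its generic fibre equals $\mathfrak h'_\eta\colon\mathfrak Y_\eta\hookrightarrow\mathfrak U_\eta=\mathfrak X_\eta$, which is an open embedding with $\mathfrak Y_\eta(K^{sh})=\mathfrak X_\eta(K^{sh})$. Moreover $\mathfrak Y_s\to\mathfrak X_s$ factors through $\mathfrak U_s$, hence through $\mathfrak X_0$ by Step 1. This exhibits $\mathfrak h$ as a $G$-N\'eron smoothening of $\mathfrak X$.

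\textbf{Main obstacle.} The delicate point is Step 2: verifying that the whole iterative N\'eron smoothening procedure for stft formal schemes can be transported into the equivariant setting. One must check that the canonical blow-up centres are $G$-invariant (not merely set-theoretically but as closed subschemes), that the good adic $G$-action propagates through each admissible blow-up and through the final restriction to the smooth locus, and—at that last step—that smoothness of $G$ is what guarantees the smooth locus is $G$-stable as a scheme and that the restricted $G$-action is still good and adic. Without the smoothness hypothesis on $G$, this final restriction step can fail, which is precisely why the proposition needs $G$ smooth.
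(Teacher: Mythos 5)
Your overall architecture coincides with the paper's: reduce to the stft case by the $G$-dilatation with centre $\mathfrak X_0$ (your Step 1 is exactly the paper's first move, and your Step 3 is the same bookkeeping), then make the Bosch--L\"utkebohmert--Raynaud smoothening algorithm equivariant. The problem is that the heart of Step 2 --- the claim that ``each such centre is intrinsically attached to the geometry, and thus is $G$-invariant'' --- is precisely the statement that needs proof, and your justification for it is not accurate. The smoothening of \cite[3.4/2]{BLR90} invoked here is taken relative to the set $E:=\mathfrak U(R^{sh})$ of integral points, and its blow-up centres are \emph{not} simply ``canonical subschemes supported on the singular locus of the successive special fibres.'' The centre of the first blow-up is the scheme $Y_\ell$ produced by an iterative filtration $F^1=E\supseteq F^2\supseteq\cdots$, where $Y_i$ is the Zariski closure of the image of $F^i$ under the reduction map $(\cdot)_k\colon\mathfrak U(R^{sh})\to\mathfrak U_s$, $U_i$ is the largest open of $Y_i$ that is $k$-smooth and over which $\Omega^1_{\mathfrak X/R}$ is locally free, $E^i=(\cdot)_k^{-1}(U_i)$ and $F^{i+1}=F^i\setminus E^i$. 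This data depends on $E$ and on the reduction map, not only on the singular locus, so its $G$-invariance is not automatic from ``intrinsicness.''

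What is actually required --- and what the paper's proof spends most of its length establishing --- is: (a) the adic $G$-action on $\mathfrak U$ induces an action of $G(R^{sh})$ on $E=\mathfrak U(R^{sh})$ and an action of $G$ on $\mathfrak U_s$ for which the reduction map $(\cdot)_k$ is equivariant; (b) the $R$-smooth locus $Sm(\mathfrak U)$ is $G$-invariant (this is where smoothness of $G$ enters, via the shearing isomorphism $(g,x)\mapsto(g,g^{-1}x)$ showing the action map is smooth), so that each $U_i$ is $G$-invariant; (c) by induction each $F^i$, $E^i$ and hence each Zariski closure $Y_i$ is invariant --- the last point needing the small argument that taking Zariski closure of an invariant set of closed points yields an invariant subscheme, checked functorially on $A$-points. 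Only after this does the universal property of the blow-up (your ``equivariant blow-up lemma,'' which is fine) lift the action through the sequence. Your proposal names this as the ``main obstacle'' but does not carry out any of (a)--(c), and its stated reason for invariance of the centres would not survive scrutiny; as written, the proof has a genuine gap exactly at its critical step.
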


\begin{proof}
First, we mention the following simple but useful observations: 
\begin{itemize}
\item[(i)] Every adic action $\theta$ of $G$ on $\mathfrak X$ is smooth as $G$ is smooth. This follows from the below commutative diagram where the isomorphism is defined by $(g,x)\mapsto (g,g^{-1}x)$
\begin{displaymath}
\xymatrix{
G\times_k\mathfrak X \ar[rr]^{\cong}\ar@{->}[dr]_{\theta}&&G\times_k\mathfrak X\ar@{->}[dl]^{\pr_2}\\
&\mathfrak X&
}
\end{displaymath}

\item[(ii)] The $R$-smooth locus $Sm(\mathfrak X)$ of $\mathfrak X$ is $G$-invariant. Since the structural morphism $\mathfrak f$ is $G$-equivariant, we have the following commutative diagram
\begin{displaymath}
\xymatrix@=2.5em{
	G\times_k\mathfrak X \ar[r]^{\theta}\ar[dr]^{\widetilde{\mathfrak f}}\ar[d]_{\Id\times \mathfrak f}& \mathfrak X \ar[d]^{\mathfrak f}\\
	G\times_k\Spf R\ar[r]^{\rho}& \Spf R
}
\end{displaymath}
\end{itemize}
By (i), the morphisms $\rho$ is smooth, so is the morphism $\widetilde{\mathfrak f}|_{G\times_k Sm(\mathfrak X)}$. Combining with the smoothness of $\theta$ (by (i))  we can deduce that for all $g\in G$ and all $x\in Sm(\mathfrak X)$, $\mathfrak f$ is smooth at $gx$ (\cite[Tag 02K5]{Stack}). This means that $Sm(\mathfrak X)$ is $G$-invariant.
	
Let $\pi\colon \mathfrak U\to\mathfrak X$ be the $G$-dilatation of $\mathfrak X$ with center $\mathfrak X_0$. Then $\mathfrak U_{\eta}(K^{sh})=\mathfrak X_{\eta}(K^{sh})$ by the universal property of the dilatation (cf. \cite[Lem. 4.3]{Ni2}). Applying \cite[3.4/2]{BLR90} and \cite[Thm. 3.1]{BS95} for $E:=\mathfrak U(R^{sh})$ we obtain a morphism $\mathfrak U'\to\mathfrak U$ which consists of a finite sequence of ($E$-permissible) blowups with centers contained in the non-smooth parts of the corresponding formal $R$-schemes, such that the $R$-smooth locus $Sm(\mathfrak U')$ is a N\'eron smoothening of $\mathfrak U$. Let us consider the finite sequence of ($E$-permissible) blowups $\mathfrak h \colon \mathfrak U'\to\mathfrak U$ constructed in \cite[3.4/2]{BLR90}. We first prove that the centers of these blowups are $G$-invariants. By induction, it suffices to prove that the center $Z$ in $\mathfrak U_s$ of the first blowup $\mathfrak h_1\colon\mathfrak U_1\to\mathfrak U$ is $G$-equivariant, since then, by the universal property of blowups, $\mathfrak U_1$ admits an adic action of $G$ such that $\mathfrak h_1$ is $G$-equivariant (see, Prop. \ref{G-dilatation}). The center $Z=Y_{\ell}$ of the first blowup $\mathfrak h_1$ is defined in \cite[3.4/2]{BLR90} as follows. Let $(\cdot)_k$ be the composition 
$$(\cdot)_k\colon \mathfrak U(R^{sh})\to \mathfrak U(\overline{k}) \xhookrightarrow{} \mathfrak U\times_R k=\mathfrak U_s,$$
where the first map is induced by the specilization $R^{sh}\to \overline{k}$. Note that, the adic action of $G$ on $\mathfrak U$ induces actions of $G(R^{sh})$ on $\mathfrak U(R^{sh})$ and $G$ on $\mathfrak U_s$ respectively. Therefore, the map $(\cdot)_k$ is equivariant, i.e. the following diagram commutes
\begin{equation*}
\begin{CD}
G(R^{sh})\times\mathfrak U(R^{sh}) @>>> \mathfrak U(R^{sh})\\
@V(\cdot)_k VV @VV(\cdot)_kV\\
G\times_k\mathfrak U_s @>>> \  \mathfrak U_s.
\end{CD}\end{equation*}
Set $F^1 := E$ and $Y_1 := \overline{F^1_k}$ the Zariski closure of $F^1_k:=(\cdot)_k(F^1)$ in $\mathfrak U_0$. Let $U_1$ be the largest open subscheme of $Y_1$ which is smooth over $k$ and where $\Omega^1_{\mathfrak X/R}|_{U^1}$ is locally free, and define 
$$E^1:=\left\{ a\in F^1 | a_k \in {U_1}\right\}=(\cdot)_k^{-1} (U_1).$$ 
Proceeding in the same way with $F^2 := F^1 \setminus E^1$, and so on, we obtain 
\begin{itemize}
\item[(a)] a decreasing sequence $F^1\supseteq F^2\supseteq \ldots$ in $E$, 
\item[(b)] subsets $E^1, E^2, \ldots$ such that $E = E^1\sqcup\cdots\sqcup  E^ i\sqcup F^{i+1}$ for all $i\geq 1$,
\item[(c)] dense open subschemes $U_i\subseteq  Y_i := \overline{F^i_k}$ such that $E^i_k\subseteq U_i$ and, moreover, $Y_{i+1}\subseteq  Y_i\setminus U_i$; in particular, $\dim Y_{i+1} < \dim Y_{i}$ if $Y_{i}\neq \emptyset$.
\end{itemize}
We see that $Y_{i+1}= \emptyset$ for all $i\geq\dim \mathfrak U_0$, denote by $\ell$ the smallest $i$ with this property, i.e. $Y_{\ell}\neq \emptyset$ and $Y_{\ell+1}= \emptyset$. By construction, $Z=Y_{\ell}$ is the center of the first blowup of the sequence $\mathfrak h \colon \mathfrak U'\to\mathfrak U$. Let us prove the $G$-invariance of $Z=Y_{\ell}$. It is obvious that $Y_1=\mathfrak U_0$ is $G$-invariant. Observe that $U_1=Sm(Y_1)\cap\iota^{-1}(Sm(\mathfrak U))$ is also $G$-invariant, where $\iota\colon\mathfrak U_s\xhookrightarrow{} \mathfrak U$ is the natural inclusion which is also $G$-equivariant. Since the map $(\cdot)_k$ is equivariant, the set $E^1$, and so $F^2$ are $G(R^{sh})$-invariant and $F^2_k$ is $G(k)$-invariant. Let $\pi\colon \mathfrak U_s\to \mathfrak U_s/G$ denote the geometric quotient morphism  \cite[Expos\'e V, Prop. 1.8]{Gro63}. We now claim that the geometric quotient morphism $\pi$ is open. Indeed, since $\pi$ is a geometric quotient, the topology in $\mathfrak U_s/G$ is the quotient topology. If $U$ is open in $\mathfrak U_s$, then $\pi^{-1}(\pi(U))=\bigcup_{g\in G} g\cdot U$, hence it is open in $\mathfrak U_s$. Therefore, $\pi(U)$ is open in $\mathfrak U_s/G$. Thus the Zariski closure $Y_2$ of $F^2_k$ in $\mathfrak U_s$ is the preimage of the Zariski closure of $\pi(F^2_k)$ in $\mathfrak U_s/G$, hence $ Y_2$  is $G$-invariant. By induction, we conclude that $Y_{\ell}$ is $G$-invariant. This proves our first assertion that the centers of the blowups occuring in $\mathfrak h \colon\mathfrak U'\to\mathfrak U$ are $G$-invariants. This implies, by the universal property of the blowup, there exists an adic action of $G$ on $\mathfrak U'$ such that $\mathfrak h$ is $G$-equivariant. Again, by our first observation $\mathfrak Y=Sm(\mathfrak U')$ is also $G$-invariant, hence $\mathfrak h\colon\mathfrak Y\to\mathfrak U$ is a $G$-N\'eron smoothening of $\mathfrak U$ and $\mathfrak X$.
\end{proof}

\begin{proposition}\label{dilatation_stft}
Let $G$ be a smooth finite group $k$-scheme. Let $\mathfrak X$ be a flat, generically smooth stft formal R-scheme endowed with good $G$-action, and let $U$ be a $G$-invariant closed subscheme of $\mathfrak X_0$. Let $\pi\colon \mathfrak U\to\mathfrak X$ be the $G$-dilatation of $\mathfrak X$ with center $U$. If $\omega$ is  a gauge form on $\mathfrak X_\eta$, the identity $\int_{\mathfrak U}|\omega|= \pi^*_s\int_{\mathfrak X}|\pi_{\eta}^*\omega|$ holds in $\mathscr M_{\mathfrak U_s}^G$.
\end{proposition}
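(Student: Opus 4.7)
The plan is to reduce both sides of the identity to an integral over a common smooth $G$-equivariant model by invoking the change of variables formula. First I would apply Proposition \ref{G-smoothening} to $\mathfrak U$, which is itself a flat, generically smooth special formal $R$-scheme equipped with the good adic $G$-action inherited from $\mathfrak X$ via Proposition \ref{G-dilatation}. This produces a $G$-N\'eron smoothening $\mathfrak h\colon \mathfrak Y \to \mathfrak U$ with $\mathfrak Y$ smooth adic over $R$, $\mathfrak h_\eta$ an open embedding, and $\mathfrak Y_\eta(K^{sh})=\mathfrak U_\eta(K^{sh})$. Applying Theorem \ref{change-of-variable} to $\mathfrak h$ then yields
\[
\int_{\mathfrak U}|\pi_\eta^*\omega| \;=\; (\mathfrak h_s)_!\int_{\mathfrak Y}|(\pi\circ\mathfrak h)_\eta^*\omega|,
\]
expressing the left-hand side as a push-forward from a smooth model.

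Next I would analyze the Greenberg-level geometry of the dilatation $\pi$. By the universal property of the dilatation, the Greenberg space $\Gr(\mathfrak U)$ is identified through $\Gr(\pi)$ with the $G$-invariant contact locus $\Gr(\mathfrak X)_U:=\{\gamma\in\Gr(\mathfrak X):\gamma_s\in U\}\subseteq \Gr(\mathfrak X)$, and the reduction $\pi_s\colon \mathfrak U_s \to \mathfrak X_s$ factors through the closed immersion $U\hookrightarrow\mathfrak X_s$. Moreover, on the contact locus the chain rule gives
\[
\ord_{\varpi,\mathfrak U}(\pi_\eta^*\omega) \;=\; \ord_{\varpi,\mathfrak X}(\omega)\circ\Gr(\pi) \;+\; \ord_\varpi(\det\Jac_\pi).
\]
Using Proposition \ref{G-transition} with Theorem \ref{fund2} level by level, the natural $\mathfrak U_s$-morphism from the level-$n$ truncation of $\Gr(\mathfrak U)$ to the base change along $\pi_s$ of the level-$n$ truncation of $\Gr(\mathfrak X)_U$ is shown to be a $G$-equivariant piecewise trivial fibration whose fiber is an affine space of dimension $\ord_\varpi(\det\Jac_\pi)$. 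This Jacobian contribution cancels exactly the $\mathbb L$-factor carried by $\pi_s^*$ when applied to the class $[\pi_{\mathfrak X,n}(\Gr(\mathfrak X)_U)\to\mathfrak X_s]$; summing over the fibers of the order function then delivers the claimed identity in $\mathscr M_{\mathfrak U_s}^G$.

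The main technical obstacle is precisely the verification of this $G$-equivariant piecewise trivial fibration structure: one must reconcile the Jacobian term $\ord_\varpi(\det\Jac_\pi)$ with the relative dimension of $\pi_s\colon \mathfrak U_s\to U$. This requires a careful local analysis of the defining equations of the dilatation with respect to the coherent ideal sheaf $\mathcal I$ cut out by $U$, identifying $\mathfrak U$ locally as an affine blowup chart in which the coordinate change has controlled Jacobian, and then invoking Theorem \ref{fund2} $G$-equivariantly to the resulting Cartesian squares of truncated Greenberg schemes. Once this matching is in place, the remaining manipulations of the motivic measure are routine.
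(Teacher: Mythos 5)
Your second step contains the decisive gap. The claim that the level-$n$ truncation $\pi_{\mathfrak U,n}(\Gr(\mathfrak U))$ is a $G$-equivariant piecewise trivial fibration with fiber $\mathbb A^{e}$, $e=\ord_\varpi(\det\Jac_\pi)$, \emph{onto the base change} $\pi_{\mathfrak X,n}(\Gr(\mathfrak X)_U)\times_{\mathfrak X_s}\mathfrak U_s$ is false: the natural map to that fibre product is in general not even surjective. Already for $\mathfrak X=\Spf R\{x\}$, $U=\{x=0\}$, $\mathfrak U=\Spf R\{T\}$ with $x=\varpi T$, the map sends $(T_0,\dots,T_n)$ to $\bigl((0,T_0,\dots,T_{n-1}),T_0\bigr)$, whose image is the proper subvariety $\{x_1=T\}$ of $\mathbb A^n\times_{\{0\}}\mathbb A^1$. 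Relatedly, the phrase ``cancels the $\L$-factor carried by $\pi_s^*$'' has no content: $\pi_s^*$ is a base change and carries no $\L$-factor. What Sebag's Lemme~7.1.3 (i.e.\ the mechanism behind Theorem~\ref{change-of-variable}) actually gives is that $\Gr_n(\pi_n)$ is a $G$-equivariant piecewise trivial fibration with fiber $\mathbb A^{e}$ onto $\pi_{\mathfrak X,n}(\Gr(\mathfrak X)_U)$ \emph{itself}; combined with your (correct) chain rule $\ord_{\varpi,\mathfrak U}(\pi_\eta^*\omega)=\ord_{\varpi,\mathfrak X}(\omega)\circ\Gr(\pi)+\ord_\varpi(\det\Jac_\pi)$, this yields only the push-forward identity $(\pi_s)_!\int_{\mathfrak U}|\pi_\eta^*\omega|=\int_{\mathfrak X}|\omega|$ restricted over $U$, not the pull-back identity in $\mathscr M_{\mathfrak U_s}^G$ asserted in the proposition. (Indeed, in the example above the two sides of the asserted identity are $\L^{-2}[\mathfrak U_s]$ and $\L^{-1}[\mathfrak U_s]$, which differ exactly by $\L^{-e}$ --- so no argument of the shape you propose can close this gap; the statement is only correct in its push-forward/restriction form, which is also the form actually used later in Proposition~\ref{base_change}.)

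Your first step is sound but orphaned: smoothening $\mathfrak U$ alone produces no object comparable with $\int_{\mathfrak X}|\omega|$. The paper proceeds differently: it takes the full blow-up $\mathfrak h\colon\mathfrak Y'\to\mathfrak X$ with center $U$ (so $\mathfrak h_\eta$ is an isomorphism and $\mathfrak U$ is an open formal subscheme of $\mathfrak Y'$), extends the $G$-action to $\mathfrak Y'$ by the universal property as in Proposition~\ref{G-dilatation}, and then applies Proposition~\ref{G-smoothening} to get a $G$-N\'eron smoothening $\mathfrak Z'\to\mathfrak Y'$. This single smooth $G$-model dominates both $\mathfrak X$ and $\mathfrak U$ (its restriction over $\mathfrak U$ is a $G$-N\'eron smoothening of $\mathfrak U$), so both integrals can be written via Theorem~\ref{change-of-variable} and Lemma~\ref{stft-connected} in terms of the connected components of $\mathfrak Z'_s$ and compared directly, following the proof of [Ni2, Prop.~4.5]. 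If you want to repair your argument, replace the N\'eron smoothening of $\mathfrak U$ by one of the blow-up $\mathfrak Y'$ and carry out that comparison; the direct Greenberg-space route, as written, does not work.
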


\begin{proof}
Let $\mathfrak h\colon \mathfrak Y'\to\mathfrak X$ be the blowup of $\mathfrak X$ with center $U$. Using the same argument as in the proof of Proposition \ref{G-dilatation}, we can construct an action of $G$ on $\mathfrak Y'$ extending the action on $\mathfrak U$ such that $\mathfrak h$ is $G$-equivariant. Then the proof works on the same line as in \cite[Prop. 4.5]{Ni2} by using the $G$-N\'eron smoothening $\mathfrak g\colon \mathfrak Z'\to\mathfrak Y'$ as constructed in Proposition \ref{G-smoothening}.
\end{proof}

Remark from Proposition \ref{universal} that if $\pi\colon \mathfrak U\to\mathfrak X$ is the dilatation of a flat special formal $R$-scheme $\mathfrak X$ with center $\mathfrak X_0$, then $\mathfrak U$ is a flat stft formal $R$-scheme, and for any gauge form $\omega$ on $\mathfrak X_{\eta}$, the differential form $\pi_{\eta}^*\omega$ is also a gauge form on $\mathfrak U_{\eta}$.

\begin{definition}\label{spint}
Let $G$ be a finite group $k$-scheme. Let $\mathfrak X$ be a flat generically smooth special formal $R$-scheme endowed with a good adic $G$-action, and let $\pi\colon \mathfrak U\to\mathfrak X$ be the $G$-dilatation of $\mathfrak X$ with center $\mathfrak X_0$. For any gauge form $\omega$ on $\mathfrak X_{\eta}$, we define 
\begin{align*}
\int_{\mathfrak X}|\omega|:= {\pi_s}_!\int_{\mathfrak U}|\pi_{\eta}^*\omega|\quad \text{in} \ \ \mathscr M_{\mathfrak X_0}^G
\end{align*}
and call it the {\it motivic $G$-integral} of $\omega$ on $\mathfrak X$. The integral $\int_{\mathfrak X_{\eta}}|\omega|:=\int_{\mathfrak U_{\eta}}|\omega|$ in $\mathscr M_k^G$ is called the {\it motivic $G$-integral} of $\omega$ on $\mathfrak X_{\eta}$.
	
If $\mathfrak X$ is a generically smooth special formal $R$-scheme endowed with good adic $G$-action, we denote by $\mathfrak X^{\flat}$ its maximal flat closed subscheme (obtained by killing $\varpi$-torsion), and define the {\it motivic $G$-integral} of a gauge form $\omega$ on $\mathfrak X$ to be
\begin{align*}
\int_{\mathfrak X}|\omega|:= \int_{\mathfrak X^{\flat}}|\omega| \quad \text{in} \ \ \mathscr M_{\mathfrak X_0}^G.
\end{align*}	
In this case, the integral $\int_{\mathfrak X_{\eta}}|\omega|:=\int_{\mathfrak X_{\eta}^{\flat}}|\omega|$ in $\mathscr M_k^G$ is called the {\it motivic $G$-integral} of $\omega$ on $\mathfrak X_{\eta}$.
\end{definition}

\begin{remark}\label{remark2}
a) By Remark \ref{remark1}, $\int_{\mathfrak X}|\omega|$ can be defined for any differential form of maximal degree $\omega$ on $\mathfrak X_{\eta}$ provided $\ord_{\varpi,\mathfrak U}(\pi_{\eta}^*\omega)$ has only finitely many values in $\mathbb Z$.

b) In the stft case, $\int_{\mathfrak X}|\omega|$ is obtained from the integral defined in Definition \ref{int_stft} by the base change $\mathscr M_{\mathfrak X_s}^G\to \mathscr M_{\mathfrak X_0}^G$ (due to Proposition \ref{dilatation_stft}). Therefore, we shall use the same notation for these two integrals but mention the ring of integral values.
\end{remark}

\begin{proposition}\label{Int_G-smoothening}
Let $\mathfrak X$ be a generically smooth special formal $R$-scheme endowed with a good adic action of $G$ and let $\phi\colon \mathfrak Y \to\mathfrak X$ be a $G$-N\'eron smoothening. Then, for any gauge form $\omega$ on $\mathfrak X_{\eta}$, we have
$$\int_{\mathfrak X}|\omega|={\phi_s}_!\int_{\mathfrak Y}|\phi_{\eta}^*\omega|  \quad \text{in} \ \ \mathscr M_{\mathfrak X_0}^G.$$
\end{proposition}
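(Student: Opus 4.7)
The plan is to factor the Néron smoothening $\phi$ through the $G$-dilatation with center $\mathfrak X_0$ and reduce to the stft change of variables formula of Theorem \ref{change-of-variable}. First, I would dispatch the non-flat case: since $\mathfrak Y$ is adic smooth hence flat, $\phi$ factors through $\mathfrak X^{\flat}$, and the integral in Definition \ref{spint} is defined via $\mathfrak X^{\flat}$, so we may assume $\mathfrak X$ is flat.

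Next, let $\pi\colon \mathfrak U\to\mathfrak X$ be the $G$-dilatation of $\mathfrak X$ with center $\mathfrak X_0$, which exists by Proposition \ref{G-dilatation} since $\mathfrak X_0$ is certainly $G$-invariant. Since the Néron smoothening hypothesis gives that $\phi_s$ factors through $\mathfrak X_0$, Proposition \ref{universal} yields a unique morphism of flat special formal $R$-schemes $\psi\colon \mathfrak Y\to\mathfrak U$ with $\phi=\pi\circ\psi$. I would then check that $\psi$ is $G$-equivariant: for each $g\in G$, both $\psi\circ g$ and $g\circ\psi$ satisfy $\pi\circ(\,\cdot\,) = \phi\circ g = g\circ\phi = \pi\circ g\circ\psi$, so uniqueness in the universal property forces $\psi\circ g=g\circ\psi$. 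Since $\pi$ is an admissible blowup (restricted), $\pi_{\eta}$ is an isomorphism, so $\psi_{\eta}=\pi_{\eta}^{-1}\circ\phi_{\eta}$ is an open embedding, in particular étale, and $\mathfrak Y_{\eta}(K^{sh})=\mathfrak X_{\eta}(K^{sh})=\mathfrak U_{\eta}(K^{sh})$. Moreover, $\mathfrak U$ is a flat generically smooth stft formal $R$-scheme (by the dilatation with center containing $\varpi$), and $\mathfrak Y$ is smooth stft by the Néron smoothening hypothesis; after passing to pure-dimensional components if necessary, both have the same relative dimension.

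Now I apply Theorem \ref{change-of-variable} to $\psi\colon \mathfrak Y\to\mathfrak U$ with the naively exponentially $G$-integrable function $\alpha:=\ord_{\varpi,\mathfrak U}(\pi_{\eta}^*\omega)$ provided by Lemma \ref{orderomega}. This gives
\begin{equation*}
\int_{\mathfrak U}|\pi_{\eta}^*\omega|=(\psi_s)_!\int_{\Gr(\mathfrak Y)}\L^{-\alpha\circ \Gr(\psi)-\ord_{\varpi}(\Jac_{\psi})}d\mu_{\mathfrak Y}^G.
\end{equation*}
The standard change-of-variables identity for orders of differential forms under an étale morphism,
\begin{equation*}
\alpha\circ \Gr(\psi)+\ord_{\varpi}(\Jac_{\psi})=\ord_{\varpi,\mathfrak Y}(\psi_{\eta}^*\pi_{\eta}^*\omega)=\ord_{\varpi,\mathfrak Y}(\phi_{\eta}^*\omega),
\end{equation*}
lets us rewrite the right-hand side as $(\psi_s)_!\int_{\mathfrak Y}|\phi_{\eta}^*\omega|$. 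Applying $(\pi_s)_!$ and using $\phi_s=\pi_s\circ\psi_s$ together with Definition \ref{spint} then gives
\begin{equation*}
\int_{\mathfrak X}|\omega|=(\pi_s)_!\int_{\mathfrak U}|\pi_{\eta}^*\omega|=(\pi_s)_!(\psi_s)_!\int_{\mathfrak Y}|\phi_{\eta}^*\omega|=(\phi_s)_!\int_{\mathfrak Y}|\phi_{\eta}^*\omega|,
\end{equation*}
as claimed; values in $\mathscr M_{\mathfrak X_0}^G$ are justified because $\phi_s$ factors through $\mathfrak X_0$.

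The main obstacle I expect is the clean construction of the $G$-equivariant factorization $\psi$ and the verification that Theorem \ref{change-of-variable} genuinely applies to $\psi$ (in particular the open-embedding/étale character of $\psi_{\eta}$ and the equality of $K^{sh}$-points), both of which reduce to the universal property of the $G$-dilatation and the fact that $\pi_{\eta}$ is an isomorphism on generic fibers. The step relating pulled-back orders to the Jacobian is standard and already implicit in the proof of Theorem \ref{change-of-variable}, so no further difficulty arises.
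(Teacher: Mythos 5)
Your proof follows essentially the same route as the paper's: factor $\phi$ through the $G$-dilatation $\pi\colon\mathfrak U\to\mathfrak X$ with center $\mathfrak X_0$ via the universal property, apply the stft change of variables formula (Theorem \ref{change-of-variable}) together with the order--Jacobian identity of \cite[Lem. 4.1.1]{LS} to the induced map $\psi\colon\mathfrak Y\to\mathfrak U$, and push forward along $\pi_s$. The only slight imprecision is that $\pi_{\eta}$ is in general an open embedding rather than an isomorphism (the dilatation is an open piece of the admissible blowup), but since $\phi_{\eta}=\pi_{\eta}\circ\psi_{\eta}$ and $\mathfrak U_{\eta}(K^{sh})=\mathfrak X_{\eta}(K^{sh})$, your verification of the hypotheses of Theorem \ref{change-of-variable} for $\psi$ goes through unchanged.
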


\begin{proof}
Let $\pi\colon \mathfrak U\to\mathfrak X$ be the $G$-dilatation of $\mathfrak X$ with center $\mathfrak X_0$. Then therer exists a unique morphism of formal $R$-schemes $\psi\colon \mathfrak Y\to\mathfrak U$ by the universal property (cf. Proposition \ref{G-dilatation}) of $\pi$ such that $\phi=\pi\circ\psi$. Combining Proposition \ref{change-of-variable}, Definition \ref{int_stft} and \cite[Lem. 4.1.1]{LS} we obtain the identity
$\int_{\mathfrak U}|\omega| = {\psi_s}_!\int_{\mathfrak Y}|\omega|$, 
which holds in $\mathscr M_{\mathfrak U_s}^G$. Hence, 
\begin{align*}
\int_{\mathfrak X}|\omega|&= {\pi_s}_!\int_{\mathfrak U}|\omega| = {\pi_s}_!\Big({\psi_s}_!\int_{\mathfrak Y}|\omega|\Big) = {\phi_s}_!\int_{\mathfrak Y}|\omega|,
\end{align*}	
which holds in $\mathscr M_{\mathfrak X_0}^G$.
\end{proof}

\begin{theorem}[Special $G$-equivariant change of variables formula]\label{changeofvariables}
Let $G$ be a smooth finite group $k$-scheme. Let $\mathfrak X$ and $\mathfrak Y$ be generically smooth special formal $R$-schemes endowed with good adic actions of $G$, and let $\mathfrak h\colon \mathfrak Y \to\mathfrak X$ be an adic $G$-equivariant morphism of formal $R$-schemes such that  the induced morphism $\mathfrak Y_{\eta} \to \mathfrak X_{\eta}$ is an open embedding and $\mathfrak Y_{\eta}(K^{sh})=\mathfrak X_{\eta}(K^{sh})$. If  $\omega$ is a gauge form on $\mathfrak X_{\eta}$, then 
$$\int_{\mathfrak X}|\omega|={\mathfrak h_0}_!\int_{\mathfrak Y}| \mathfrak h_{\eta}^*\omega|\quad \text{in} \ \ \mathscr M_{\mathfrak X_0}^G.$$
\end{theorem}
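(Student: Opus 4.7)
My plan is to reduce the statement to two applications of Proposition~\ref{Int_G-smoothening}, via the observation that composing with a $G$-N\'eron smoothening of $\mathfrak Y$ simultaneously produces a $G$-N\'eron smoothening of $\mathfrak X$. First, I reduce to the flat case. By Definition~\ref{spint} both sides of the desired equality depend only on the maximal flat closed subschemes $\mathfrak X^\flat$ and $\mathfrak Y^\flat$; these inherit good adic $G$-actions (the $\varpi$-torsion ideal is $G$-stable because the $G$-action on $\Spf R$ preserves the $\varpi$-adic valuation up to units), and $\mathfrak h$ restricts to an adic $G$-equivariant morphism $\mathfrak h^\flat\colon \mathfrak Y^\flat \to \mathfrak X^\flat$ still satisfying the hypotheses. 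Since $\mathfrak X^\flat_0 = \mathfrak X_0$ and $(\mathfrak h^\flat)_0 = \mathfrak h_0$, I may assume from now on that both $\mathfrak X$ and $\mathfrak Y$ are flat.

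Next, I apply Proposition~\ref{G-smoothening} to obtain a $G$-N\'eron smoothening $\psi\colon \mathfrak Y' \to \mathfrak Y$, with $\mathfrak Y'$ adic smooth over $R$. The key claim is that the composition $\mathfrak g := \mathfrak h \circ \psi\colon \mathfrak Y' \to \mathfrak X$ is itself a $G$-N\'eron smoothening of $\mathfrak X$. The checks are direct: $\mathfrak Y'$ is adic smooth; $\mathfrak g_\eta = \mathfrak h_\eta \circ \psi_\eta$ is a composition of open embeddings, hence an open embedding; the $K^{sh}$-point equalities chain as $\mathfrak Y'_\eta(K^{sh}) = \mathfrak Y_\eta(K^{sh}) = \mathfrak X_\eta(K^{sh})$; the map $\mathfrak g_s$ on special fibres factors through $\mathfrak X_0$ because $\psi_s$ factors through $\mathfrak Y_0$ and, by adicity of $\mathfrak h$, the reduction $\mathfrak h_0$ sends $\mathfrak Y_0$ into $\mathfrak X_0$; and $\mathfrak g$ is $G$-equivariant as a composition of $G$-equivariant morphisms.

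Finally, I apply Proposition~\ref{Int_G-smoothening} twice: once to $\mathfrak g\colon \mathfrak Y' \to \mathfrak X$ with the gauge form $\omega$, and once to $\psi\colon \mathfrak Y' \to \mathfrak Y$ with the gauge form $\mathfrak h_\eta^*\omega$ on $\mathfrak Y_\eta$ (which is indeed a gauge form because $\mathfrak h_\eta$ is an open embedding). Using $\mathfrak g_0 = \mathfrak h_0 \circ \psi_0$ and functoriality of the pushforward $(-)_!$, this gives
\begin{align*}
\int_{\mathfrak X}|\omega|
= {\mathfrak g_0}_!\int_{\mathfrak Y'}\bigl|\psi_\eta^*\mathfrak h_\eta^*\omega\bigr|
= {\mathfrak h_0}_!\,{\psi_0}_!\int_{\mathfrak Y'}\bigl|\psi_\eta^*\mathfrak h_\eta^*\omega\bigr|
= {\mathfrak h_0}_!\int_{\mathfrak Y}\bigl|\mathfrak h_\eta^*\omega\bigr|
\end{align*}
in $\mathscr M_{\mathfrak X_0}^G$, which is precisely the desired identity. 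The main technical content is thus concentrated in the verification that $\mathfrak h \circ \psi$ is a $G$-N\'eron smoothening of $\mathfrak X$; the precise hypotheses on $\mathfrak h$ in the theorem (adic $G$-equivariance, open embedding on generic fibres, equal $K^{sh}$-points) are arranged exactly so that this transfer succeeds.
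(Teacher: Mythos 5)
Your proposal is correct and follows essentially the same route as the paper: take a $G$-N\'eron smoothening of $\mathfrak Y$ (Proposition~\ref{G-smoothening}), observe that its composition with $\mathfrak h$ is a $G$-N\'eron smoothening of $\mathfrak X$, and apply Proposition~\ref{Int_G-smoothening} twice together with functoriality of the pushforward and the fact that $\mathfrak h_0$ is the restriction of $\mathfrak h_s$ to $\mathfrak Y_0$ by adicity. Your explicit reduction to the flat case and the itemized verification that $\mathfrak h\circ\psi$ is a $G$-N\'eron smoothening are details the paper leaves implicit, but they do not change the argument.
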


\begin{proof}
Let $\phi\colon \mathfrak Z \to\mathfrak Y$ be a $G$-N\'eron smoothening of $\mathfrak Y$. Then $\mathfrak h\circ\phi\colon \mathfrak Z \to\mathfrak X$ is a $G$-N\'eron smoothening of $\mathfrak X$. It follows from Proposition \ref{Int_G-smoothening} that $\int_{\mathfrak Y}|\omega|={\phi_s}_!\int_{\mathfrak Z}|\omega|$ in $\mathscr M_{\mathfrak Y_0}^G$ and that $\int_{\mathfrak X}|\omega|= {(\mathfrak h\circ\phi)_s}_!\int_{\mathfrak Z}|\omega|$ in $\mathscr M_{\mathfrak X_0}^G$. Since $\mathfrak h$ is adic, $\mathfrak h_0$ is nothing but the restriction of $\mathfrak h_s$ on ${\mathfrak Y_0}$. Thus $\int_{\mathfrak X}|\omega|=  {\mathfrak h_0}_!\int_{\mathfrak Y}|\omega|$ in $\mathscr M_{\mathfrak X_0}^G$.
\end{proof}

\begin{proposition}\label{base_change}
Let $G$ be a smooth finite group $k$-scheme. Let $\mathfrak X$ be a generically smooth special formal $R$-scheme endowed with a good adic action of $G$, and let $U$ be a $G$-invariant locally closed subscheme of $\mathfrak X_0$. Denote by $\mathfrak U$ the formal completion of $\mathfrak X$ along $U$. Then for every gauge form $\omega$ on $\mathfrak X_{\eta}$, the integral $\int_{\mathfrak U}|\omega|$ is the image of $\int_{\mathfrak X}|\omega|$ under the base change
$$\mathscr M_{\mathfrak X_0}^G\to  \mathscr M_{ U}^G.$$
\end{proposition}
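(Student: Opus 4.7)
The plan is to reduce the statement to the smooth stft case through the $G$-dilatation, and there to identify the Greenberg spaces and the motivic measures on both sides through the natural projection. First I would note that since $U$ is $G$-invariant, the formal completion $\mathfrak U$ of $\mathfrak X$ along $U$ inherits from $\mathfrak X$ a good adic $G$-action, is generically smooth, and $\mathfrak U_0 = U$. Let $\pi\colon \mathfrak V \to \mathfrak X$ be the $G$-dilatation with center $\mathfrak X_0$ provided by Proposition~\ref{G-dilatation}, and set $\mathfrak V' := \mathfrak V \times_{\mathfrak X} \mathfrak U$ in the category of special formal $R$-schemes, which is the formal completion of $\mathfrak V$ along $\pi_s^{-1}(U)$. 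Using the compatibility of dilatations with flat base change and with formal completion (Nicaise, \cite[Prop. 2.21, 2.23]{Ni2}) together with the universal property of Proposition~\ref{universal}, the induced morphism $\pi'\colon \mathfrak V' \to \mathfrak U$ is the $G$-dilatation of $\mathfrak U$ with center $U$. In particular, $\mathfrak V'$ is a flat stft formal $R$-scheme with good adic $G$-action, and $(\pi')_\eta^*\omega$ is the pullback of $\pi_\eta^*\omega$ along the open embedding $\mathfrak V'_\eta \hookrightarrow \mathfrak V_\eta$.

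By Definition~\ref{spint}, this rewrites both sides in terms of integrals over stft formal schemes:
\begin{align*}
\int_{\mathfrak X}|\omega| &= (\pi_s)_! \int_{\mathfrak V}|\pi_\eta^*\omega| \quad \text{in } \mathscr M_{\mathfrak X_0}^G, \\
\int_{\mathfrak U}|\omega| &= (\pi'_s)_! \int_{\mathfrak V'}|(\pi')_\eta^*\omega| \quad \text{in } \mathscr M_{U}^G.
\end{align*}
Since the base-change functor $\mathscr M_{\mathfrak X_0}^G \to \mathscr M_{U}^G$ commutes with the pushforward $(\pi_s)_!$ via the cartesian square relating $\pi_s$ and $\pi'_s$, it therefore suffices to establish the stft base-change statement: for any flat generically smooth stft formal $R$-scheme $\mathfrak V$ with good $G$-action, any $G$-invariant locally closed subscheme $V$ of $\mathfrak V_s$, and any gauge form $\nu$ on $\mathfrak V_\eta$, the image of $\int_{\mathfrak V}|\nu|$ under $\mathscr M_{\mathfrak V_s}^G \to \mathscr M_V^G$ equals $\int_{\mathfrak V \widehat{\times}_{\mathfrak V_s} V}|\nu|$.

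To prove this stft assertion I would apply Proposition~\ref{G-smoothening} to obtain a $G$-N\'eron smoothening $\mathfrak h\colon \mathfrak W \to \mathfrak V$ and its pullback $\mathfrak h_U\colon \mathfrak W \widehat{\times}_{\mathfrak V} \mathfrak V_U \to \mathfrak V_U$, which is again a $G$-N\'eron smoothening (here $\mathfrak V_U$ denotes the completion along $V$). By Proposition~\ref{Int_G-smoothening} both integrals can be computed on the smooth models, so we are reduced to the case where $\mathfrak V$ is smooth stft. In that case, for every $n$ the natural morphism $\Gr(\mathfrak V_U) \to \Gr(\mathfrak V)$ identifies $\Gr(\mathfrak V_U)$ with the $G$-invariant cylinder $\pi_{\mathfrak V,0}^{-1}(V)$, and the induced map on $n$-th truncations is a $G$-equivariant isomorphism onto $\pi_{\mathfrak V,n}(\Gr(\mathfrak V)) \cap \Gr_n(X_n)|_V$. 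Combining this identification with Lemma~\ref{stft-connected} (or equivalently, a direct check on the defining formula of $\mu^G_{\mathfrak V}$ in Proposition~\ref{measure}) yields the claimed compatibility between the two integrals and the base-change map, which closes the argument.

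The main technical obstacle is the stft statement of the second paragraph: one must verify carefully that the $G$-equivariant piecewise trivial fibration structures appearing in the definition of the motivic measure are preserved by restriction to $\pi_{\mathfrak V,0}^{-1}(V)$ and that the formal completion $\mathfrak V_U$ has Greenberg space canonically isomorphic (as a $G$-scheme over $V$) to this preimage. Once these compatibilities are granted, the functoriality of $(\cdot)_!$ under cartesian squares and the explicit definition of $\mu^G_{\mathfrak V}$ give the result at once.
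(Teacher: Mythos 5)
Your reduction is the same as the paper's: form the two $G$-dilatations $\pi\colon\mathfrak X'\to\mathfrak X$ (center $\mathfrak X_0$) and $\pi'\colon\mathfrak U'\to\mathfrak U$ (center $U$), use Nicaise's compatibility of dilatations with formal completions (\cite[Props. 2.21, 2.23]{Ni2}) to produce the connecting morphism and the Cartesian square on special fibers, and conclude from the base-change identity $\mathfrak i_0^*\circ(\pi_s)_!=(\pi'_s)_!\circ(\mathfrak i'_s)^*$. The one place you diverge is the remaining stft ingredient: the paper disposes of it in one line by citing Proposition~\ref{dilatation_stft}, which says exactly that the integral over the dilatation $\mathfrak i'\colon\mathfrak U'\to\mathfrak X'$ of the stft scheme $\mathfrak X'$ with center $\pi_s^{-1}(U)$ is the pullback $(\mathfrak i'_s)^*$ of the integral over $\mathfrak X'$. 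You instead sketch a proof of this from scratch via a $G$-N\'eron smoothening and an identification of $\Gr(\mathfrak V_U)$ with the cylinder $\pi_{\mathfrak V,0}^{-1}(V)$. That sketch is essentially the proof of Proposition~\ref{dilatation_stft} (the equivariant version of \cite[Prop. 4.5]{Ni2}), so it is redundant here; moreover, as written it is not quite well-founded, because the formal completion of a stft formal scheme along a closed subscheme of its special fiber is special but not stft, so its Greenberg space is not defined in this framework and its motivic integral is \emph{defined} through its dilatation --- relating that definition to the tube $\pi_{\mathfrak V,0}^{-1}(V)$ is precisely the content of Proposition~\ref{dilatation_stft}, not something obtainable by ``a direct check on the defining formula of $\mu^G_{\mathfrak V}$.'' Citing Proposition~\ref{dilatation_stft} at that point closes the argument cleanly and makes your proof coincide with the paper's; you should also note, as the paper does, that the locally closed case splits into an open restriction (easy) followed by a completion along a closed subscheme, since the cited compatibilities are stated for closed centers.
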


\begin{proof}
We prove only for the case that $U$ is a closed subscheme of $\mathfrak X_0$ since the proof for the case that $U$ is open is similar (and simpler). Let $\pi\colon \mathfrak X'\to\mathfrak X$ and $\pi'\colon \mathfrak U'\to\mathfrak U$ be the $G$-dilatations of  $\mathfrak X$ with center $\mathfrak X_0$ and of $\mathfrak U$ with center $U$, respectively. By \cite[Prop. 2.23]{Ni2}, there exists a  morphism  $\mathfrak i'\colon  \mathfrak U'\to\mathfrak X'$ such that the diagram
\begin{equation*}
\begin{CD}
\mathfrak U' @>\pi' >> \mathfrak U\\
@V \mathfrak i' VV @VV \mathfrak i V\\
\mathfrak X' @>\pi >> \mathfrak X
\end{CD}\end{equation*}  
commutes, and the morphism $\mathfrak i'\colon  \mathfrak U'\to\mathfrak X'$ is the dilatation of $\mathfrak X'$ with center $\mathfrak X'_s\times_{\mathfrak X_0}U$. Moreover, it follows from \cite[Prop. 2.21]{Ni2} that $\mathfrak U'$ is actually the formal completion of $\mathfrak X'$ along $\mathfrak X'_s\times_{\mathfrak X_0}U$, i.e. the following induced diagram is Cartesian
\begin{equation*}
\begin{CD}
\mathfrak U'_s @>\pi'_s >> \mathfrak U_s\\
@V \mathfrak i'_s VV @VV \mathfrak i_s V\\
\mathfrak X'_s @>\pi_s >> \mathfrak X_0
\end{CD}\end{equation*}  
Since $\pi_s$ and $\pi'_s$ factors through $ \mathfrak X_0$ and $U$ respectively, the diagram
\begin{equation*}
\begin{CD}
\mathfrak U'_s @>\pi'_s >>  U\\
@V \mathfrak i'_s VV @VV \mathfrak i_0 V\\
\mathfrak X'_s @>\pi_s >> \mathfrak X_0
\end{CD}\end{equation*}  
is also Cartesian, therefore $\mathfrak i_0^*\circ {\pi_s}_!={\pi'_s}_!\circ {\mathfrak i'_s}^*$. We can conclude that
\begin{align*}
\int_{\mathfrak U}|\omega|= {\pi'_s}_!\int_{\mathfrak U'}|\omega|={\pi'_s}_!\Big( {i'_s}^*\int_{\mathfrak X'}|\omega|\Big)
=(\mathfrak i_0^*\circ {\pi_s}_!)\int_{\mathfrak X'}|\omega|
=\mathfrak i_0^*\int_{\mathfrak X}|\omega|,
\end{align*}	
which hold in $\mathscr M_{\mathfrak X_0}^G$. Here, the second equality is due to Proposition \ref{dilatation_stft}. 
\end{proof}

\begin{corollary}[Additivity of motivic integrals]\label{int-additive}
Let $G$ be a smooth finite group $k$-scheme. Let $\mathfrak X$ be a generically smooth special formal $R$-scheme endowed with good adic action of $G$ and let $\omega$ be a gauge form on $\mathfrak X_\eta$. 
\begin{itemize}
\item[(i)] If $\{U_i, i \in I\}$ is a finite stratification of $\mathfrak X_0$ into $G$-invariant locally closed subsets, and $\mathfrak U_i$ is the formal completion of $\mathfrak X$ along $U_i$, then the following holds in $\mathscr M_{\mathfrak X_0}^G$:
$$\int_{\mathfrak X}|\omega|=\sum_{i\in I} (\varepsilon_i)_! \int_{\mathfrak U_i}|\omega|,$$
where $(\varepsilon_i)_!$ is the pushforward $\mathscr M_{U_i}^G\to  \mathscr M_{\mathfrak X_0}^G$ induced by the inclusion $\varepsilon_i: U_i\hookrightarrow \mathfrak X_0$.

\item[(ii)] If $\{\mathfrak U_i, i \in I\}$ is a finite covering of $\mathfrak X$ consisting of $G$-invariant open subsets, then the following holds in $\mathscr M_{\mathfrak X_0}^G$:
$$\int_{\mathfrak X}|\omega|=\sum_{I'\subseteq I}(-1)^{|I'|-1} (\varepsilon_{I'})_! \int_{\mathfrak U_{I'}}|\omega|,$$
where $\mathfrak U_{I'}=\cap_{i\in I'} \mathfrak U_i$ for all $I'\subseteq I$, and $(\varepsilon_{I'})_!$ is the pushforward $\mathscr M_{U_{I'}}^G\to  \mathscr M_{\mathfrak X_0}^G$ induced by the inclusion $\varepsilon_{I'}: U_{I'}\hookrightarrow \mathfrak X_0$.
\end{itemize}
\end{corollary}

\begin{proof}
This is an immediate consequence of Proposition \ref{base_change}. Indeed, by Proposition \ref{base_change}, $\int_{\mathfrak U_i}|\omega|={\varepsilon_i}^*\int_{\mathfrak X}|\omega|$, thus $(\varepsilon_i)_!\int_{\mathfrak U_i}|\omega|=(\varepsilon_i)_!({\varepsilon_i})^*\int_{\mathfrak X}|\omega|$, which implies the first statement by summing up the identities over $i\in I$. The second one is proved in the same way.	
\end{proof}

\begin{proposition}\label{special-connected}
Let $\mathfrak X$ be a smooth special formal $R$-scheme of pure relative dimension $d$ endowed with a good adic $G$-action. Suppose that $\omega$ is an $\mathfrak X$-bounded gauge form on $\mathfrak X_{\eta}$. Let $\mathcal C(\mathfrak X_0)$ be a family of constructible subsets of $\mathfrak X_0$ such that
\begin{itemize}
	\item $\mathfrak X_0=\bigcup_{C\in \mathcal C(\mathfrak X_0)}C$; $C\cap C'=\emptyset$ if $C, C'\in \mathcal C(\mathfrak X_0)$ and $C\not= C'$;
	
	\item every $C\in \mathcal C(\mathfrak X_0)$ is a union of connected components $C_1 ,\dots, C_r$ of $\mathfrak X_0$ such that $\ord_{C_1}(\omega)=\cdots=\ord_{C_r}(\omega)$ (denote this number by $\ord_C(\omega)$) and $C$ is $G$-invariant.
\end{itemize}
Then the following identity holds in $\mathscr M_{\mathfrak X_0}^G$:
$$\int_{\mathfrak X}|\omega|=\L^{-d}\sum_{C\in \mathcal C(\mathfrak X_0)}\left[ C\hookrightarrow \mathfrak X_0\right] \L^{-\ord_C(\omega)}.$$
\end{proposition}

\begin{proof}
This is an adic $G$-action analogue of the proof of \cite[Prop. 5.14]{Ni2}. We can assume that $\mathfrak X$ is flat. By Corollary \ref{int-additive} and \cite[Cor. 5.12]{Ni2}, we can also assume that $\mathfrak X_0$ is connected. Let $\pi\colon \mathfrak U\to \mathfrak X$ be a $G$-ditalation with center $\mathfrak X_0$. Then $\int_{\mathfrak X}|\omega|=(\pi_0)_!\int_{\mathfrak U}|\pi_{\eta}^*\omega|$. Since $\mathfrak U$ is a flat smooth stft formal $R$-scheme of pure relative dimension $d$, we deduce from the proof of \cite[Prop. 4.15]{Ni2} that $[\mathfrak U_0\to \mathfrak X_0]=[\mathfrak X_0\to \mathfrak X_0] \L^{\ord_{\varpi}\Jac_{\pi}}$. In particular, $\mathfrak U_0=\mathfrak U_s$ is connected, thus by Lemma \ref{stft-connected},
$$\int_{\mathfrak U}|\pi_{\eta}^*\omega|=\L^{-d}\left[\mathfrak U_0\to \mathfrak U_0\right] \L^{-\ord_{\mathfrak U_0}(\pi_{\eta}^*\omega)}$$
in $\mathscr M_{\mathfrak U_0}^G$. There is a small mistake in \cite[Lem. 5.13]{Ni2} concerning the order formula (compare it with \cite[Lem. 5.5]{Ni2}); and we can correct it as follows $\ord_{\mathfrak U_0}(\pi_{\eta}^*\omega)=\ord_{\mathfrak X_0}(\omega)+\ord_{\varpi}\Jac_{\pi}$. Then we have
$$\int_{\mathfrak X}|\omega|=\L^{-d}\left[\mathfrak X_0\to \mathfrak X_0\right] \L^{-\ord_{\mathfrak X_0}(\omega)},$$
which holds in $\mathscr M_{\mathfrak X_0}^G$. The proposition is now proved.
\end{proof}

%********************************************
\subsection{Monodromic volume Poincar\'e series and motivic volumes}\label{motvol}
A special formal $R$-scheme $\mathfrak X$ is called {\it regular} if $\mathcal O_{\mathfrak X,x}$ is regular for every $x\in \mathfrak X$. Assume that $d$ is the pure relative dimension of $\mathfrak X$. A closed formal subscheme $\mathfrak E$ of $\mathfrak X$ is called a {\it strict normal crossings divisor} if, for every $x$ in $\mathfrak X$, there exists a regular system of local parameters $(x_0,\dots,x_d)$ in $\mathcal O_{\mathfrak X,x}$ such that the ideal defining $\mathfrak E$ at $x$ is locally generated by $\prod_{i=0}^dx_i^{N_i}$ for some $N_i\in \mathbb N$, $0\leq i\leq d$, and such that the irreducible components of $\mathfrak E$ are regular (see \cite[Sect. 2.4]{Ni2} for definition of irreducibility). If $\mathfrak E'$ is an irreducible component of $\mathfrak E$ defined locally at $x$ by the ideal $(x_i^{N_i})$, it is a fact that $N_i$ is constant when $x$ varies on $\mathfrak E'$, which is called {\it the multiplicity of $\mathfrak E'$} and denoted by $N(\mathfrak E')$. 
Then we have $\mathfrak E=\sum_{i=1}^rN(\mathfrak E_i)\mathfrak E_i$, where $\mathfrak E_i$'s are irreducible components of $\mathfrak E$. The divisor $\mathfrak E$ is called a {\it tame strict normal crossings divisor} if $N(\mathfrak E_i)$ is prime to the characteristic exponent of $k$ for every $i$. Any special formal $R$-scheme $\mathfrak X$ is said to {\it have tame strict normal crossings} if $\mathfrak X$ is regular and $\mathfrak X_s$ is a tame strict normal crossings divisor.

\begin{definition}
Let $\mathfrak X$ be a flat generically smooth special formal $R$-scheme. A {\it resolution of singularities} of $\mathfrak X$ is a proper morphism of flat special formal $R$-schemes $\mathfrak h\colon\mathfrak Y\to\mathfrak X$ such that $\mathfrak h_{\eta}$ is an isomorphism and $\mathfrak Y$ is regular with $\mathfrak Y_s$ being a strict normal crossings divisor. The resolution of singularities $\mathfrak h$ is said to be {\it tame} if $\mathfrak Y_s$ is a tame strict normal crossings divisor.
\end{definition}

\begin{theorem}[Temkin \cite{Tem18}]\label{resolution-sing}
Suppose that the base field $k$ has characteristic zero. Then any generically smooth flat special formal $R$-scheme $\mathfrak X$ admits a resolution of singularities. 
\end{theorem}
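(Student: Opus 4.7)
The plan is to reduce the statement to Temkin's general desingularization results for Noetherian quasi-excellent formal schemes in characteristic zero. First I would verify that the category of special formal $R$-schemes falls within the scope of Temkin's theorems: by definition, any special formal $R$-scheme $\mathfrak X$ is locally of the form $\Spf A$ with $A$ a quotient of $R\{x_1,\dots,x_m\}[[y_1,\dots,y_{m'}]]$. Since $k$ has characteristic zero, $R$ is excellent, and both the restricted power series construction and the formal power series construction preserve quasi-excellence, so $A$ is quasi-excellent and $\mathfrak X$ is a quasi-excellent Noetherian adic formal scheme.

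Next I would apply Temkin's desingularization theorem from \cite{Tem18} to produce a proper morphism $\mathfrak h\colon \mathfrak Y\to \mathfrak X$ with $\mathfrak Y$ regular, which is an isomorphism over the regular locus of $\mathfrak X$ and which is obtained as a composition of admissible blowups with regular centers contained in $\Sing(\mathfrak X)$. Since $\mathfrak X$ is flat and generically smooth, $\mathfrak X_\eta$ is smooth, hence regular, so $\mathfrak h_\eta\colon \mathfrak Y_\eta\to \mathfrak X_\eta$ is an isomorphism. Flatness of $\mathfrak Y$ over $R$ is then automatic, since $\mathfrak Y$ is regular and $\mathfrak Y_\eta$ is dense in $\mathfrak Y$, so $\varpi$ is a nonzero divisor on every local ring of $\mathfrak Y$. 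Moreover the centers of the blowups are special formal $R$-subschemes of the ambient $\mathfrak X$, and admissibility of the blowups ensures that all intermediate objects remain special formal $R$-schemes (cf. \cite[Cor. 2.17]{Ni2}).

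The final step is to upgrade $\mathfrak h$ so that $\mathfrak Y_s$ becomes a strict normal crossings divisor. Starting from the regular $\mathfrak Y$ constructed above, I would apply the embedded resolution/principalization part of Temkin's program to the closed formal subscheme defined by the ideal sheaf $(\varpi)\subseteq \mathcal O_{\mathfrak Y}$: a further sequence of admissible blowups with regular centers contained in the singular locus of $(\mathfrak Y_s)_{\red}$ (hence still isomorphisms over $\mathfrak Y_\eta$, and a fortiori over $\mathfrak X_\eta$) yields a regular special formal $R$-scheme $\mathfrak Y'$ on which the pullback of $(\varpi)$ is a strict normal crossings divisor, i.e.\ $\mathfrak Y'_s$ is a strict normal crossings divisor in the sense of \cite[Def. 2.33]{Ni2}. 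The composition $\mathfrak Y'\to \mathfrak X$ is then the desired resolution of singularities.

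The main obstacle is really bookkeeping rather than new mathematics: one must verify that Temkin's theorems, formulated for quasi-excellent Noetherian formal schemes, apply verbatim to the present special formal $R$-scheme setting, that every blowup appearing in the construction is admissible and adic so that generic fibers and the categorical setup of Section~4.1 are preserved, and that all intermediate formal schemes remain special and flat over $R$. All of these compatibilities are precisely what is worked out in \cite{Tem18}, so no new argument is needed beyond translating the statement into the category of special formal $R$-schemes.
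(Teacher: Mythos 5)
Your proposal is correct and follows essentially the same route as the paper, which simply defers to Temkin's desingularization of quasi-excellent Noetherian formal schemes in characteristic zero (the paper's ``proof'' is the remark that the statement follows from \cite[Thm. 1.1.13]{Tem18} as explained in \cite[Thm. 6.3.3]{CNS}, with the affine case already in \cite[Prop. 2.43]{Ni2}); your sketch just makes that reduction explicit. One small caveat: flatness of $\mathfrak Y$ over $R$ is best justified by the fact that admissible blowups preserve $R$-flatness (\cite[Cor. 2.17]{Ni2}), rather than by the density argument you give, since the rigid generic fiber $\mathfrak Y_{\eta}$ is not an open subscheme of $\mathfrak Y$ and a regular formal scheme could a priori have components supported in the special fiber.
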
 

As shortly explained in \cite[Thm. 6.3.3]{CNS}, this theorem can be proved using \cite[Thm. 1.1.13]{Tem18}. For the affine case, it was realized early in \cite[Prop. 2.43]{Ni2} by means of a result in \cite{Tem}. 

%We now study the $\hat\mu$-equivariant setting of volume Poincar\'e series and motivic volume of special formal $R$-schemes. Their version without action was performed early in \cite[Sect. 7]{Ni2}.

\begin{notation}
For $n\in \mathbb N^*$, we define $R(n)=R[\tau]/(\tau^n-\varpi)$, $K(n)=K[\tau]/(\tau^n-\varpi)$, $\mathfrak X(n)=\mathfrak X\times_RR(n)$, and $\mathfrak X_{\eta}(n)=\mathfrak X_{\eta}\times_KK(n)$. If $\omega$ is a gauge form on $\mathfrak X_{\eta}$, let $\omega(n)$ be its pullback via $\mathfrak X_{\eta}(n)\to \mathfrak X_{\eta}$, a gauge form on $\mathfrak X_{\eta}(n)$.
\end{notation}

\begin{lemma}\label{mu_n-action}
Let $\mathfrak X$ be a formal $R$-scheme and $n$ in $\mathbb N^*$. Then there is a natural good adic $\mu_n$-action on both $\Spf R(n)$ and $\mathfrak X(n)$ which is induced from the ring homomorphism $R(n)\to k[\xi]/(\xi^n-1)\otimes_kR(n)$ given by $\tau\mapsto \xi\otimes \tau$. Moreover, the structural morphism of the formal $\Spf R(n)$-scheme $\mathfrak X(n)$ is $\mu_n$-equivariant.
\end{lemma}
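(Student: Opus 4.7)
The plan is to establish the lemma in three stages. First, I will verify that the formula $\tau \mapsto \xi \otimes \tau$ defines a well-defined adic $R$-algebra homomorphism $\theta^{\sharp}\colon R(n) \to k[\xi]/(\xi^n-1) \otimes_k R(n)$. Since $R(n) = R[\tau]/(\tau^n-\varpi)$, the only relation to check is that the element $\tau^n - \varpi$ goes to zero, which is immediate because $(\xi \otimes \tau)^n - 1 \otimes \varpi = \xi^n \otimes \tau^n - 1 \otimes \tau^n = 0$. The $R$-linearity of $\theta^{\sharp}$ (elements of $R$ going to $1 \otimes r$) is read off from the formula, and adicness follows from the fact that $\theta^{\sharp}$ sends the ideal of definition $(\varpi)$ of $R(n)$ into the ideal of definition $(1 \otimes \varpi)$ of the target, since $\xi$ is a unit in $k[\xi]/(\xi^n-1)$.

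Second, I will check that $\theta^{\sharp}$ satisfies the two coaction axioms, so as to yield a $\mu_n$-action $\theta\colon \mu_n \times_k \Spf R(n) \to \Spf R(n)$ in the sense of Definition \ref{action-k}. The counit axiom amounts to observing that post-composition with the augmentation $\xi \mapsto 1$ returns $\tau \mapsto \tau$; the coassociativity axiom amounts to verifying that, under the Hopf comultiplication corresponding to $\mu_n \times_k \mu_n \to \mu_n$, both compositions in the square send $\tau$ to $\xi_1 \otimes \xi_2 \otimes \tau$ in $k[\xi_1]/(\xi_1^n-1) \otimes_k k[\xi_2]/(\xi_2^n-1) \otimes_k R(n)$. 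Since $\Spf R(n)$ is affine, the resulting adic action is automatically good.

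Third, I will transport $\theta$ to $\mathfrak X(n)$ by base change. Because $\theta^{\sharp}$ is $R$-linear, $\theta$ is a morphism over $\Spf R$, so the fibre product $\theta_{\mathfrak X} := \Id_{\mathfrak X} \times_R \theta\colon \mu_n \times_k \mathfrak X(n) \to \mathfrak X(n)$ is well-defined and inherits the action axioms. Adicness is preserved by base change. For goodness, I cover $\mathfrak X$ by affine open formal subschemes $\{\mathfrak U_i\}$; then $\{\mathfrak U_i(n)\}$ is an affine open cover of $\mathfrak X(n)$, and each $\mathfrak U_i(n)$ is $\mu_n$-stable (since the action is trivial in the $\mathfrak X$-direction), so every $\mu_n$-orbit of $\mathfrak X(n)$ lies in some $\mathfrak U_i(n)$. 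The $\mu_n$-equivariance of the structural morphism $\mathfrak X(n) \to \Spf R(n)$ with respect to $\theta_{\mathfrak X}$ and $\theta$ is then tautological, as this morphism is by construction the second projection in the fibre product defining $\theta_{\mathfrak X}$. There is no serious obstacle in the argument; the only point requiring care is the $R$-linearity of $\theta^{\sharp}$, which is precisely what legitimises the base change to $\mathfrak X(n)$ and the equivariance of the structural morphism.
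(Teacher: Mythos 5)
Your proof is correct; the paper itself gives no argument for this lemma (it merely declares it ``straightforward'' after introducing the $\mu_n$-action on $\Spf R(n)$), and your three-step verification --- well-definedness and $R$-linearity of the coaction $\tau\mapsto\xi\otimes\tau$, the coaction axioms, and transport to $\mathfrak X(n)=\mathfrak X\times_R R(n)$ by base change over $\Spf R$ --- is exactly the intended routine check. In particular you correctly isolate the one point that matters, namely that $R$-linearity of the coaction is what makes the base change and the equivariance of the projection $\mathfrak X(n)\to\Spf R(n)$ tautological.
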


Studying action of $\mu_n=\Spec\left(k[\xi]/(\xi^n-1)\right)$ on $\mathfrak X$, the previous lemma is straightforward. Remark that if $\mathfrak X$ is a generically smooth special formal $R$-scheme and $n\in \mathbb N^*$, then $\mathfrak X(n)$ is a generically smooth special formal $R(n)$-scheme.

\begin{definition}\label{Poincareseries}
Let $\mathfrak X$ be a generically smooth special formal $R$-scheme, and $\omega$ a gauge form on $\mathfrak X_{\eta}$. The below series is called the {\it motivic volume Poincar\'e series} of $\mathfrak X$:
$$
P(\mathfrak X,\omega; T):=\sum_{n\geq 1}\Big(\int_{\mathfrak X(n)}|\omega(n)|\Big)T^n\ \in \mathscr M_{\mathfrak X_0}^{\hat\mu}[[T]].
$$
\end{definition}

Let $\mathfrak X$ be a generically smooth flat special formal $R$-scheme of pure relative dimension $d$. Assume that $\mathfrak X$ admits a resolution of singularities $\mathfrak h\colon \mathfrak Y \to \mathfrak X$ (this is evident if the characteristic of $k$ is zero, cf. Theorem \ref{resolution-sing}). Let $\mathfrak E_i$, $i\in S$, be the irreducible components of $(\mathfrak Y_s)_{\mathrm{red}}$. Let $N_i$ be the multiplicity of $\mathfrak E_i$ in $\mathfrak Y_s$. Put $E_i:=(\mathfrak E_i)_0$ for $i\in S$ and
$$\mathfrak E_I:=\bigcap_{i\in I}\mathfrak E_i,\quad E_I:=\bigcap_{i\in I}E_i, \quad E_I^{\circ}:=E_I\setminus\bigcup_{j\not\in I}E_j$$ 
for any nonempty subset $I\subseteq S$. We can check that $\mathfrak E_I$ is regular and that $E_I=(\mathfrak E_I)_0$. Let $\{\mathfrak U=\Spf \mathcal O\}$ be a covering by affine open formal $R$-subschemes  of the formal completion $\mathfrak Y_I$ of $\mathfrak Y$ along $E_I^{\circ}$. The composition $\mathfrak f\circ\mathfrak h\colon \mathfrak U\to \Spf R$ is defined in the ring level by $\varpi\mapsto u\prod_{i\in I}y_i^{N_i}$, 
where $u$ is nonzero on $\mathfrak U$ and $y_i$ is a local coordinate defining $E_i$. Put $N_I:=\gcd(N_i)_{i\in I}$. Then we can construct as in \cite{NS} and \cite{Ni2} an unramified Galois covering $\widetilde{E}_I^{\circ}\to E_I^{\circ}$ with Galois group $\mu_{N_I}$ which is given over $\mathfrak U_0\cap E_I^{\circ}$ as the reduction $\widetilde{\mathfrak U}_0$ of the formal scheme 
$$\widetilde{\mathfrak U}=\Spf \mathcal O[T]/(uT^{N_I}-1).$$
Notice that $\widetilde{E}_I^{\circ}$ is endowed with natural good adic $\mu_{N_I}$-action over $E_I^{\circ}$ induced by multiplying  $T$ with elements of $\mu_{N_I}$. Let $\big[\widetilde E_I^{\circ}\big]$ be the class of the $\mu_{N_I}$-equivariant morphism $\widetilde E_I^{\circ}\to E_I^{\circ}\to \mathfrak X_0$ in the ring $\mathscr M_{\mathfrak X_0}^{\mu_{N_I}}$.

\begin{theorem}\label{int_Xm}
Let $\mathfrak X$ be a generically smooth flat special formal $R$-scheme of pure relative dimension $d$. Assume that we have a tame resolution of singularities $\mathfrak h\colon \mathfrak Y \to \mathfrak X$ with $\mathfrak Y_s=\sum_{i\in S}N_i\mathfrak E_i$ and an $\mathfrak X$-bounded gauge form $\omega$ on $\mathfrak X_{\eta}$ with order $\alpha_i:=\ord_{\mathfrak E_i}(\mathfrak h_{\eta}^*\omega)$ for $i\in S$. If $n\in \mathbb N^*$ is prime to the characteristic exponent of $k$, then the below identity holds in $\mathscr M_{\mathfrak X_0}^{\mu_n}$:
$$
\int_{\mathfrak X(n)}|\omega(n)|=\L^{-d}\sum_{\emptyset\not=I\subseteq S}(\L-1)^{|I|-1}\big[\widetilde{E}_I^{\circ}\big]\left(\sum_{\begin{smallmatrix} k_i\geq 1, \sum_{i\in I}k_iN_i=n \end{smallmatrix}} \L^{-\sum_{i\in I}k_i\alpha_i}\right).
$$
%, and the identity
%$$
%\int_{\mathfrak X_{\eta}(n)}|\omega(n)|=\L^{-d}\sum_{\emptyset\not=I\subseteq S}(\L-1)^{|I|-1}\big[\widetilde{E}_I^{\circ}\big]\left(\sum_{\begin{smallmatrix} k_i\geq 1, i\in I\\ \sum_{i\in I}k_iN_i=n \end{smallmatrix}}\L^{-\sum_{i\in I}k_i\alpha_i}\right).
%$$
%holds in $\mathscr M_k^{\mu_n}$. 
\end{theorem}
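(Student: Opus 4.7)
The strategy is to mirror Nicaise's proof of \cite[Thm.~7.12]{Ni2}, but with systematic bookkeeping of the $\mu_n$-action. First I would apply the base-change $\mathfrak h(n)\colon \mathfrak Y(n)\to\mathfrak X(n)$ of the tame resolution: by Lemma \ref{mu_n-action} and functoriality of $(-)\times_R R(n)$, this is an adic $\mu_n$-equivariant morphism of generically smooth special formal $R(n)$-schemes whose generic fiber is an isomorphism, so the special equivariant change of variables (Theorem \ref{changeofvariables}) gives
$$\int_{\mathfrak X(n)}|\omega(n)|\;=\;(\mathfrak h(n)_0)_!\int_{\mathfrak Y(n)}|\mathfrak h(n)_\eta^*\omega(n)|\qquad\text{in }\mathscr M_{\mathfrak X_0}^{\mu_n}.$$
Since $\mathfrak Y(n)_0=\mathfrak Y_0=\bigsqcup_{\emptyset\ne I\subseteq S}E_I^\circ$ (the passage to reduction kills the $\tau$-nilpotents) and the induced $\mu_n$-action on $\mathfrak Y_0$ is trivial, each stratum $E_I^\circ$ is automatically $\mu_n$-invariant. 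Corollary \ref{int-additive} combined with Proposition \ref{base_change} then reduces the problem to computing $\int_{\widehat{\mathfrak Y(n)}_{E_I^\circ}}|\mathfrak h(n)_\eta^*\omega(n)|$ separately for each $\emptyset\ne I\subseteq S$.

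Fix a nonempty $I\subseteq S$. Working locally in a formal chart around a point of $E_I^\circ$, the tame strict normal crossings hypothesis provides regular parameters $(y_i)_{i\in I}$ together with transverse parameters such that $\varpi=u\prod_{i\in I}y_i^{N_i}$ for a unit $u$, and $\mathfrak h_\eta^*\omega=v\prod_{i\in I}y_i^{\alpha_i}\,\eta$ for a unit $v$ and a local gauge form $\eta$. After base change, $\widehat{\mathfrak Y(n)}_{E_I^\circ}$ is defined locally by $\tau^n=u\prod_{i\in I}y_i^{N_i}$ with $\mu_n$ acting by $\tau\mapsto\xi\tau$. Using that $n$ is prime to the characteristic exponent of $k$, this singularity admits a toric $\mu_n$-equivariant N\'eron smoothening (existence being guaranteed by Proposition \ref{G-smoothening}) whose smooth components correspond bijectively to tuples $\mathbf k=(k_i)_{i\in I}\in(\mathbb N^*)^{|I|}$ with $\sum_{i\in I}k_iN_i=n$; the chart for $\mathbf k$ is parametrised by introducing $z$ with $\tau=z\prod_{i\in I}y_i^{k_i}$ and is cut out \'etale-locally by $z^{N_I}=u^{-1}$. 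The special fiber of this chart is thus identified with the Kummer cover $\widetilde{E}_I^\circ$, and $\mu_n$ acts on $z$ by $z\mapsto\xi\,z$, i.e.\ factors through the surjection $\mu_n\twoheadrightarrow\mu_{N_I}$, recovering exactly the canonical $\mu_{N_I}$-action on $\widetilde{E}_I^\circ$.

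To conclude, I would apply Proposition \ref{special-connected} to each smooth $\mathbf k$-component, which is $\mu_n$-invariant by construction. Tracking the pullback of $\mathfrak h_\eta^*\omega(n)$ through the substitution $\tau=z\prod y_i^{k_i}$ shows that the order of the gauge form on the $\mathbf k$-component is constant equal to $\sum_{i\in I}k_i\alpha_i$ (the Jacobian of the substitution exactly accounts for the rescaling of the local trivialization of $\eta$). Proposition \ref{special-connected} then contributes $\L^{-d}\,[\widetilde{E}_I^\circ\to\mathfrak X_0]\,\L^{-\sum_{i\in I}k_i\alpha_i}$ for each $\mathbf k$, while the factor $(\L-1)^{|I|-1}$ arises because the $\mathbf k$-chart carries $|I|$ independent torus directions transverse to $\widetilde{E}_I^\circ$, of which exactly one is absorbed by the single toric relation $\sum_{i\in I}k_iN_i=n$. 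Summing over $\mathbf k$ and over $I$ yields the claimed identity.

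The main obstacle is the local analysis in the second paragraph: constructing the $\mu_n$-equivariant toric N\'eron smoothening of the model $\tau^n=u\prod y_i^{N_i}$ as a union of smooth charts indexed by the lattice points $\mathbf k$ satisfying $\sum k_iN_i=n$, and verifying \emph{equivariantly} that each chart reduces to $\widetilde{E}_I^\circ$ with its canonical $\mu_{N_I}$-Galois action (not merely as a scheme). The tameness hypothesis $\gcd(n,\mathrm{char.exp}(k))=1$ is indispensable here, both to guarantee that the Kummer cover $z^{N_I}=u^{-1}$ is \'etale and to ensure that the $\mu_n$-action on the toric resolution decomposes into the expected character components without collapse.
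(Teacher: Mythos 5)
Your overall skeleton coincides with the paper's: apply the special $G$-equivariant change of variables (Theorem \ref{changeofvariables}) to $\mathfrak h(n)$, then reduce to a local computation over the strata of $\mathfrak Y_0$ and finish with Proposition \ref{special-connected}. The organizational difference is that the paper splits into two cases: when $n$ is not $\mathfrak Y_s$-linear it works \emph{globally} with the smooth locus of the normalization $Sm(\widetilde{\mathfrak Y(n)})\to\mathfrak Y(n)$ (Lemma \ref{lemma-preparation2}), whose reduction decomposes into the components $\widetilde E_i^{\circ}$ for $N_i\mid n$, so only singleton $I$ contribute and no stratification is needed; the $\mathfrak Y_s$-linear case (where $|I|\geq 2$ genuinely contributes) is then deferred to the computation of \cite[Lem.~7.5]{NS} and \cite[Thm.~7.12]{Ni2}. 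You instead propose a uniform treatment of all $I$ at once via Corollary \ref{int-additive}, Proposition \ref{base_change} and explicit local charts. That is a legitimate alternative route, but it means you must actually carry out, equivariantly, precisely the local analysis that the paper outsources — and your sketch of it is where the problems lie.

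Concretely: (1) Proposition \ref{G-smoothening} gives abstract existence of a $\mu_n$-N\'eron smoothening but says nothing about its component structure, so it cannot be cited for the indexing by tuples $\mathbf k$ with $\sum_{i\in I}k_iN_i=n$; you must construct the smoothening by hand. (2) Your chart $\tau=z\prod_{i\in I}y_i^{k_i}$ with $z^{N_I}=u^{-1}$ is not the right substitution: plugging it into $\tau^n=u\prod y_i^{N_i}$ gives $z^n\prod y_i^{nk_i}=u\prod y_i^{N_i}$, which does not simplify. The correct local model is the dilatation $y_i=\varpi(n)^{k_i}\widetilde y_i$ with $\widetilde y_i$ units, which (using $\sum k_iN_i=n$) reduces the equation to $u\prod_{i\in I}\widetilde y_i^{N_i}=1$. (3) Consequently the special fiber of the $\mathbf k$-component is not $\widetilde E_I^{\circ}$ itself but the variety $\{u\prod\widetilde y_i^{N_i}=1\}$ over $E_I^{\circ}$, which is a torsor under $\ker\big(\mathbb G_m^{|I|}\to\mathbb G_m,\ (t_i)\mapsto\prod t_i^{N_i}\big)\cong\mu_{N_I}\times\mathbb G_m^{|I|-1}$; the factor $(\L-1)^{|I|-1}$ comes from identifying this torsor, $\mu_n$-equivariantly, with $\widetilde E_I^{\circ}\times\mathbb G_m^{|I|-1}$ (tameness of $n$ is used here). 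Your appeal to ``torus directions of which one is absorbed by the relation'' gestures at this but does not establish the equivariant triviality of the torsor, which is the actual content needed to produce the class $(\L-1)^{|I|-1}\big[\widetilde E_I^{\circ}\big]$ in $\mathscr M_{\mathfrak X_0}^{\mu_n}$. Repairing these points essentially amounts to reproving \cite[Lem.~7.5]{NS} in the equivariant special-formal setting, which is exactly the step the paper's proof relies on.
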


To prove this theorem we need the following two lemmas, the first one is trivial.

\begin{lemma}\label{lemma-preparation1}
Let $\mathfrak h\colon \mathfrak Y\to \mathfrak X$ be a resolution of singularities of a generically smooth special formal $R$-schemes. Then $\mathfrak h(n)\colon \mathfrak Y(n) \to\mathfrak X(n)$ is an adic $\mu_n$-equivariant morphism of formal $R(n)$-schemes with $\mathfrak h(n)_{\eta}$ an isomorphism.
\end{lemma}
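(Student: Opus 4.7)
The plan is to verify each of the four assertions separately, all of which reduce to formal properties of base change along the flat extension $R\to R(n)$ and to the construction of the $\mu_n$-action in Lemma \ref{mu_n-action}.

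First I would observe that $\mathfrak h(n)$ is defined as the base change $\mathfrak h\times_R\Id_{R(n)}\colon \mathfrak Y\times_RR(n)\to \mathfrak X\times_RR(n)$, so by construction it is a morphism of formal $R(n)$-schemes. The adicness of $\mathfrak h(n)$ is then immediate: $\mathfrak h$ is adic by our standing convention on morphisms of special formal $R$-schemes, and adicness is preserved under base change since locally it is the statement that if $A\to B$ is adic with respect to an ideal of definition $J\subseteq A$, then $A\widehat{\otimes}_RR(n)\to B\widehat{\otimes}_RR(n)$ is adic for the ideal of definition generated by the image of $J$.

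Next I would check $\mu_n$-equivariance. By Lemma \ref{mu_n-action}, the $\mu_n$-action on $\mathfrak X(n)$ (resp. $\mathfrak Y(n)$) is obtained by base change from the $\mu_n$-action on $\Spf R(n)$ induced by $\tau\mapsto \xi\otimes\tau$; more precisely it is the product of the trivial action on the first factor with the canonical action on $\Spf R(n)$. Thus $\mu_n\times_k\mathfrak X(n)=(\mu_n\times_k\mathfrak X)\times_R R(n)$, and similarly for $\mathfrak Y(n)$. Under these identifications, the action morphism is $\Id_{\mathfrak X}\times_R\sigma_{R(n)}$, and the equivariance square for $\mathfrak h(n)$ is simply the base change along $\sigma_{R(n)}$ of the identity $\mathfrak h=\mathfrak h$ (and similarly for the projection $\mathrm{pr}_2$), which trivially commutes. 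Hence $\mathfrak h(n)$ is $\mu_n$-equivariant.

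Finally, for the generic fiber, recall from \cite{Ni2} (or \cite{deJ}) that the generic fiber functor $(\cdot)_\eta$ commutes with flat base change along $R\to R(n)$ in the sense that $\mathfrak X(n)_\eta\cong \mathfrak X_\eta\times_KK(n)$ and similarly for $\mathfrak Y$. Under these identifications $\mathfrak h(n)_\eta$ is simply $\mathfrak h_\eta\times_K\Id_{K(n)}$, which is an isomorphism because $\mathfrak h_\eta$ is, and isomorphisms are stable under base change. The only point that requires a moment of care, and is perhaps the main (mild) obstacle, is the compatibility of the generic fiber functor with the base change $R\to R(n)$ for special (not merely stft) formal schemes; this however is standard and follows from the explicit construction via the algebras $A[\varpi^{-1}J^n]^{\wedge}\otimes_R K$ together with the fact that $R(n)$ is finite flat over $R$.
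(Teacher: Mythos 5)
Your proposal is correct. The paper itself offers no proof of this lemma (it is declared ``trivial to prove'' just before its statement), and your argument --- base change along the finite flat extension $R\to R(n)$ preserves adicness, the $\mu_n$-action of Lemma \ref{mu_n-action} is itself a base change so equivariance of $\mathfrak h(n)=\mathfrak h\times_R\Id_{R(n)}$ is automatic, and compatibility of Berthelot's generic fiber functor with this base change identifies $\mathfrak h(n)_\eta$ with $\mathfrak h_\eta\times_K\Id_{K(n)}$ --- is exactly the intended verification.
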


Let $\mathfrak Y$ be a regular special formal $R$-scheme whose special fiber is a tame strict normal crossings divisor $\mathfrak Y_s=\sum_{i\in S}N_i\mathfrak E_i$. Recall from \cite[Def. 2.38]{Ni2} that a number $n\in \mathbb N^*$ is said to be {\it $\mathfrak Y_s$-linear} if there exists a nonempty subset $I\subseteq S$ of cardinal $|I|\geq 2$ such that $E_I^{\circ}\not=\emptyset$ and the linear equation $\sum_{i\in I}k_iN_i=n$ has solutions in $(\mathbb N^*)^I$.

\begin{lemma}\label{lemma-preparation2}
Let $\mathfrak Y$ be a regular special formal $R$-scheme whose special fiber is a tame strict normal crossings divisor $\mathfrak Y_s=\sum_{i\in S}N_i\mathfrak E_i$. If $n$ is prime to the characteristic exponent of $k$ and not $\mathfrak Y_s$-linear, then $\phi\colon Sm(\widetilde{\mathfrak Y(n)})\to \mathfrak Y(n)$ is an adic $\mu_n$-equivariant morphism of formal $R(n)$-schemes such that  $\phi_{\eta}$ is an open embedding and $Sm(\widetilde{\mathfrak Y(n)})_{\eta}(K(n)^{sh})=\mathfrak Y(n)_{\eta}(K(n)^{sh})$. Furthermore, 
$$Sm(\widetilde{\mathfrak Y(n)})_0=\bigsqcup_{N_i\mid n}\Big((\widetilde{\mathfrak Y(n)})_0\times_{\mathfrak Y_0}E_i^{\circ}\Big),$$
in which for $i$ with $N_i|n$, $(\widetilde{\mathfrak Y(n)})_0\times_{\mathfrak Y_0}E_i^{\circ}$ is a $\mu_{N_i}$-invariant constructible subset of $Sm(\widetilde{\mathfrak Y(n)})_0$ and $\mu_{N_i}$-equivariant canonically isomorphic to $\widetilde{E}_i^{\circ}$ over $E_i^{\circ}$.
\end{lemma}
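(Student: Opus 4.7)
The plan is to track the natural $\mu_n$-action through the normalization construction, lifting the non-equivariant version of this result from \cite[Sect. 7]{Ni2} to the $\mu_n$-equivariant setting by exploiting the canonicity of every step.

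First, by Lemma \ref{mu_n-action}, the formal $R(n)$-scheme $\mathfrak Y(n)$ carries a canonical good adic $\mu_n$-action inherited from the one on $\Spf R(n)$. Since the $\mu_n$-action is by $R$-automorphisms and normalization is an intrinsic construction, its universal property lifts this action uniquely to an adic $\mu_n$-action on $\widetilde{\mathfrak Y(n)}$ making the normalization morphism $\widetilde{\mathfrak Y(n)}\to\mathfrak Y(n)$ $\mu_n$-equivariant. Because $n$ is prime to the characteristic exponent of $k$, $\mu_n$ is smooth over $k$, and then, repeating observations (i)--(ii) of the proof of Proposition \ref{G-smoothening}, the smooth locus $Sm(\widetilde{\mathfrak Y(n)})$ is $\mu_n$-invariant and $\phi$ inherits an adic $\mu_n$-equivariant structure. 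For the generic fiber, since $\mathfrak Y_\eta$ is smooth ($\mathfrak Y$ being regular with strict normal crossings special fiber), $\mathfrak Y(n)_\eta$ is already normal, hence $(\widetilde{\mathfrak Y(n)})_\eta\to\mathfrak Y(n)_\eta$ is an isomorphism and $\phi_\eta$ is an open embedding. A $K(n)^{sh}$-point of $\mathfrak Y(n)_\eta$ extends by valuative criterion to an $R(n)^{sh}$-section of $\mathfrak Y(n)$ that specializes into some open stratum $E_I^\circ$; the non-$\mathfrak Y_s$-linearity assumption forbids $|I|\geq 2$, so the specialization lies in an $E_i^\circ$ with $N_i\mid n$ and the corresponding lift to $\widetilde{\mathfrak Y(n)}$ lies in the smooth locus, yielding the equality $Sm(\widetilde{\mathfrak Y(n)})_\eta(K(n)^{sh})=\mathfrak Y(n)_\eta(K(n)^{sh})$.

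For the structural description of $Sm(\widetilde{\mathfrak Y(n)})_0$, I would work locally in a regular system of parameters $(y_0,\dots,y_d)$ at a point of $E_i^\circ$ with $\varpi=u\,y_i^{N_i}$ for a unit $u$. Base changing to $R(n)$ gives $\tau^n=u\,y_i^{N_i}$; writing $m=n/N_i$ when $N_i\mid n$ and introducing the new coordinate $z:=\tau^m/y_i$ satisfying the relations $z\,y_i=\tau^m$ and $z^{N_i}=u$, one obtains an explicit local chart of the normalization that is smooth over $R(n)$, with reduction isomorphic to $\Spec\bigl(k[y_j]_{j\neq i}[z]/(z^{N_i}-u)\bigr)$, i.e. precisely the local model of $\widetilde{E}_i^\circ$ recalled in Section \ref{motvol}. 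Points sitting over multi-component strata $E_I^\circ$ with $|I|\geq 2$ are either absent or non-smooth, by the non-$\mathfrak Y_s$-linearity hypothesis, which rules out simultaneous splittings of $n$ into the $N_i$'s. This identifies $Sm(\widetilde{\mathfrak Y(n)})_0$ with the claimed disjoint union. Under $\tau\mapsto\xi\tau$ with $\xi\in\mu_n$ and $y_i,u$ fixed, the new coordinate transforms as $z\mapsto\xi^m z$, so the $\mu_n$-action on each component factors through the quotient map $\mu_n\to\mu_{N_i}$, $\xi\mapsto\xi^{n/N_i}$, and recovers exactly the $\mu_{N_i}$-action on $\widetilde{E}_i^\circ$ obtained by multiplying the $z$-coordinate by $N_i$-th roots of unity.

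The main obstacle I anticipate is the global gluing of these local charts into a canonical $\mu_{N_i}$-equivariant isomorphism with $\widetilde{E}_i^\circ$: the unit $u$ is only determined modulo $N_i$-th powers (through the choice of $y_i$ and the residual unit ambiguity), but this is exactly the descent datum for the Kummer cover $\widetilde{E}_i^\circ\to E_i^\circ$ used in \cite{DL5}, and tameness of $\mathfrak Y_s$ together with $\gcd(n,\mathrm{char\,exp}(k))=1$ ensures that the Kummer-type extension is unramified and the descent is effective, forcing the identification to be canonical and $\mu_{N_i}$-equivariant.
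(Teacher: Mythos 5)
Your argument is correct and follows essentially the same route as the paper's proof: equivariance of the normalization from its canonical/universal construction together with Lemma \ref{mu_n-action}, invariance of the smooth locus via the observations in the proof of Proposition \ref{G-smoothening}, and the local Kummer chart over $E_i^{\circ}$ glued into the covering $\widetilde{E}_i^{\circ}$ (the paper simply outsources the non-equivariant statements to \cite[Thm. 5.1]{Ni2} and the local model to \cite[Lem. 4.4]{NS}). One cosmetic caveat: your coordinate $z=\tau^{n/N_i}/y_i$ satisfies $z^{N_i}=u$, whereas the covering $\widetilde{E}_I^{\circ}$ of Section \ref{motvol} is cut out by $z^{N_I}=u(y)^{-1}$, so you should instead take $T=y_i/\tau^{n/N_i}$ (as the paper does, with $uT^{N_i}=1$) to obtain the canonical $\mu_{N_i}$-equivariant identification on the nose rather than up to inversion.
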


\begin{proof}
By \cite[Thm. 5.1]{Ni2}, $\phi$ is a morphism of special formal $R(n)$-scheme which is the restriction of the normalization $\widetilde{\mathfrak Y(n)}\to \mathfrak Y(n)$. The normalization is clearly an adic $\mu_n$-equivariant, deduced from the natural adic $\mu$-action shown in Lemma \ref{mu_n-action}. Since $\mu_n$ is smooth, the second observation in the proof of Proposition \ref{G-smoothening} shows that $Sm(\widetilde{\mathfrak Y(n)})$ is $\mu_n$-invariant in $\widetilde{\mathfrak Y(n)}$, thus $\phi$ is an adic $\mu_n$-equivariant morphism of formal $R(n)$-schemes. The properties that $\phi_{\eta}$ is an open embedding, $Sm(\widetilde{\mathfrak Y(n)})_{\eta}(K(n)^{sh})=\mathfrak Y(n)_{\eta}(K(n)^{sh})$ and the decomposition of $Sm(\widetilde{\mathfrak Y(n)})_0$ in the lemma are verified in the proof of \cite[Thm. 5.1]{Ni2}.

Let $y$ be any point in $\mathfrak E_i^{\circ}$, and let $\mathfrak U=\Spf A$ be an affine open formal neighborhood of $y$ in $\mathfrak Y$.% (in fact, $\mathfrak U$ is the formal completion of $\mathfrak Y$ along an affine open neighborhood of $y$ in $\mathfrak Y_0$). 
Then the formal $R$-scheme structure at $y$ is given by $\varpi=uy_i^{N_i}$
with $u$ a unit. As in the proof of \cite[Lem. 4.4]{NS} we may write
$$\widetilde{\mathfrak U(n)}= \Spf(A\otimes_RR(n))\{T\}/(\varpi(n)^{n/N_i}T-y_i,uT^{N_i}-1),$$
where $\varpi(n)$ is the uniformizing parameter of $R(n)$. Then we have
$$(\widetilde{\mathfrak U(n)})_0\times_{\mathfrak U_0}E_i^{\circ}\cong \Spec\Big(A[T]/(y_i,uT^{N_i}-1)\Big),$$
it is endowed with the $\mu_{N_i}$-action $T\mapsto \xi\otimes T$ and is $\mu_{N_i}$-equivariant canonically isomorphic to the restriction of $\widetilde{E}_i^{\circ}$ over $\mathfrak U_0$. The conclusion comes from the glueing procedure. 
\end{proof}

\begin{proof}[Proof of Theorem \ref{int_Xm}]
We first prove the theorem for the case where $n$ is not $\mathfrak Y_s$-linear. Since the conclusion of Lemma \ref{lemma-preparation1} satisfies the hypothesis of Theorem \ref{changeofvariables}, we deduce that
\begin{align*}%\label{proof422}
\int_{\mathfrak X(n)}|\omega(n)|=(\mathfrak h(n)_0)_!\int_{\mathfrak Y(n)}|\mathfrak h(n)_{\eta}^*\omega(n)|.
\end{align*}
Similarly, applying Theorem \ref{changeofvariables} once again, in the setting of Lemma \ref{lemma-preparation2}, we get
\begin{align*}%\label{proof423}
\int_{\mathfrak Y(n)}|\mathfrak h(n)_{\eta}^*\omega(n)|=(\phi_0)_!\int_{Sm(\widetilde{\mathfrak Y(n)})}|\phi_{\eta}^*\mathfrak h(n)_{\eta}^*\omega(n)|.
\end{align*}
By Lemma \ref{lemma-preparation2}, all $D_i:=(\widetilde{\mathfrak Y(n)})_0\times_{\mathfrak Y_0}E_i^{\circ}$ for $N_i|n$ are $\mu_{N_i}$-invariant constructible subsets of $Sm(\widetilde{\mathfrak Y(n)})_0$. Using \cite[Prop. 7.11]{Ni2} and the notation in Proposition \ref{special-connected} we get
$$\ord_{D_i}(\phi_{\eta}^*\mathfrak h(n)_{\eta}^*\omega(n))=(n/N_i)\cdot\ord_{\mathfrak E_i}(\mathfrak h_{\eta}^*\omega)=(n/N_i)\cdot\alpha_i$$
for all $i\in S$ with $N_i|n$, we deduce from Proposition \ref{special-connected} and, again, Lemma \ref{lemma-preparation2} that
$$\int_{Sm(\widetilde{\mathfrak Y(n)})}|\phi_{\eta}^*\mathfrak h(n)_{\eta}^*\omega(n)|=\L^{-d}\sum_{N_i\mid n}\big[\widetilde{E}_i^{\circ} \big] \L^{-n\alpha_i/N_i}.$$
Therefore, the case where $n$ is not $\mathfrak Y_s$-linear has been completely proved.

For the case where $n$ is $\mathfrak Y_s$-linear, we can extend the computation in \cite[Lem. 7.5]{NS} to the special formal scheme setting and to the $\mu_n$-equivariant setting, which is natural, and use the same arguments in the proof of \cite[Thm. 7.12]{Ni2} (see also the proof of \cite[Thm. 7.6.]{NS}).
\end{proof}

\begin{corollary}\label{poincare}
Assume that the base field $k$ has characteristic zero. Let $\mathfrak X$ be a generically smooth flat special formal $R$-scheme of relative dimension $d$. Let $\mathfrak h\colon \mathfrak Y \to \mathfrak X$ be a resolution of singularities with $\mathfrak Y_s=\sum_{i\in S}N_i\mathfrak E_i$. Assume that $\omega$ is an $\mathfrak X$-bounded gauge form on $\mathfrak X_{\eta}$ with $\alpha_i:=\ord_{\mathfrak E_i}(\mathfrak h_{\eta}^*\omega)$ for $i\in S$. Then, in $\mathscr M_{\mathfrak X_0}^{\hat\mu}[[T]]$, 
$$P(\mathfrak X,\omega;T)=\L^{-d}\sum_{\emptyset\not=I\subseteq S}(\L-1)^{|I|-1}\big[\widetilde{E}_I^{\circ}\big]\prod_{i\in I}\frac{\L^{-\alpha_i}T^{N_i}} {1-\L^{-\alpha_i}T^{N_i}}.$$
%and
%$$S(\mathfrak X_{\eta},\omega;T)=\L^{-d}\sum_{\emptyset\not=I\subseteq S}(\L-1)^{|I|-1}\big[\widetilde{E}_I^{\circ}\big]\prod_{i\in I}\frac{\L^{-\alpha_i}T^{N_i}} {1-\L^{-\alpha_i}T^{N_i}}.$$
\end{corollary}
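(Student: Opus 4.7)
The plan is to apply Theorem \ref{int_Xm} coefficient-by-coefficient and then recognize the resulting constrained double sum as a product of geometric series.

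Since the base field $k$ has characteristic zero, Theorem \ref{resolution-sing} provides a resolution of singularities $\mathfrak h\colon \mathfrak Y\to \mathfrak X$, which is automatically tame because the characteristic exponent of $k$ equals $1$. For the same reason, every $n\in\mathbb N^*$ is prime to the characteristic exponent, so Theorem \ref{int_Xm} applies uniformly and yields
$$\int_{\mathfrak X(n)}|\omega(n)|=\L^{-d}\sum_{\emptyset\neq I\subseteq S}(\L-1)^{|I|-1}\big[\widetilde{E}_I^{\circ}\big]\sum_{\substack{(k_i)_{i\in I}\in(\mathbb N^*)^I\\ \sum_{i\in I}k_iN_i=n}}\L^{-\sum_{i\in I}k_i\alpha_i}$$
in $\mathscr M_{\mathfrak X_0}^{\mu_n}$, which we view in $\mathscr M_{\mathfrak X_0}^{\hat\mu}$ via the canonical morphism induced by $\hat\mu\to\mu_n$.

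Substituting this into Definition \ref{Poincareseries}, I would interchange the (finite) sum over $I\subseteq S$ with the sum over $n\geq 1$, then drop the constraint $\sum_{i\in I}k_iN_i=n$ by letting the tuple $(k_i)$ run freely over $(\mathbb N^*)^I$ and absorbing its value into the exponent of $T$. This gives
\begin{align*}
P(\mathfrak X,\omega;T)
&= \L^{-d}\sum_{\emptyset\neq I\subseteq S}(\L-1)^{|I|-1}\big[\widetilde{E}_I^{\circ}\big]\sum_{(k_i)\in(\mathbb N^*)^I}\L^{-\sum_{i\in I}k_i\alpha_i}\,T^{\sum_{i\in I}k_iN_i}\\
&= \L^{-d}\sum_{\emptyset\neq I\subseteq S}(\L-1)^{|I|-1}\big[\widetilde{E}_I^{\circ}\big]\prod_{i\in I}\sum_{k_i\geq 1}\big(\L^{-\alpha_i}T^{N_i}\big)^{k_i}.
\end{align*}
Each inner sum is a geometric series in $\mathscr M_{\mathfrak X_0}^{\hat\mu}[[T]]$, summing to $\L^{-\alpha_i}T^{N_i}/(1-\L^{-\alpha_i}T^{N_i})$, which yields precisely the claimed identity.

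There is no genuine obstacle here: all the substantial content has already been done in Theorem \ref{int_Xm} (which itself rests on Theorem \ref{changeofvariables} and Proposition \ref{special-connected}), and the present corollary is pure formal manipulation of power series. The only point that warrants a sentence of verification is that the class $\big[\widetilde{E}_I^{\circ}\big]$, which a priori carries a $\mu_{N_I}$-action defined from the resolution $\mathfrak h$ (and independently of $n$), maps compatibly into $\mathscr M_{\mathfrak X_0}^{\hat\mu}$ as $n$ varies, so that summing the Theorem \ref{int_Xm} identities over $n$ is legitimate in $\mathscr M_{\mathfrak X_0}^{\hat\mu}[[T]]$.
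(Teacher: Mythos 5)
Your proof is correct and is exactly the argument the paper intends: the corollary is stated as an immediate consequence of Theorem \ref{int_Xm} (which applies to every $n$ since the characteristic exponent is $1$), obtained by summing the coefficient formula over $n$ and recognizing the constrained sum over $(k_i)\in(\mathbb N^*)^I$ as the product of geometric series $\prod_{i\in I}\L^{-\alpha_i}T^{N_i}/(1-\L^{-\alpha_i}T^{N_i})$. Your closing remark about the compatibility of the classes $\big[\widetilde{E}_I^{\circ}\big]$ in the colimit $\mathscr M_{\mathfrak X_0}^{\hat\mu}$ is the right point to check and is indeed unproblematic.
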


By this corollary, the limit $-\lim_{T\to\infty}P(\mathfrak X,\omega;T)=\L^{-d}\sum_{\emptyset\not=I\subseteq S}(1-\L)^{|I|-1}\big[\widetilde{E}_I^{\circ}\big]$ in $\mathscr M_{\mathfrak X_0}^{\hat\mu}$ is independent of the choice of $\omega$. It depends on $\widehat{K^s}$ because it depends on $\varpi$ (see \cite[Rem. 7.40]{Ni2}), it is called the {\it motivic volume} of $\mathfrak X$ and denoted by $\MV(\mathfrak X;\widehat{K^s})$.

\begin{proposition}[Additivity of $\MV$]\label{MV-additive}
Suppose that $k$ has characteristic zero. Let $\mathfrak X$ be a generically smooth special formal $R$-scheme. The following hold.
\begin{itemize}
\item[(i)] If $\{U_i, i \in Q\}$ is a finite stratification of $\mathfrak X_0$ into locally closed subsets, and $\mathfrak U_i$ is the formal completion of $\mathfrak X$ along $U_i$, then $\MV(\mathfrak X;\widehat{K^s})=\sum_{i\in Q} \MV(\mathfrak U_i;\widehat{K^s})$.

\item[(ii)] If $\{\mathfrak U_i, i \in Q\}$ is a finite open covering of $\mathfrak X$, then by putting $\mathfrak U_{I}=\bigcap_{i\in I} \mathfrak U_i$, we have
$$\MV(\mathfrak X;\widehat{K^s})=\sum_{\emptyset\not=I\subseteq Q}(-1)^{|I|-1} \MV(\mathfrak U_{I};\widehat{K^s}).$$
\end{itemize}
%Here the forgetful morphisms $\mathscr M_{U_J}^G\to  \mathscr M_{\mathfrak X_0}^G$ are applied to the RHS in both statements.
\end{proposition}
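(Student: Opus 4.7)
The plan is to deduce both statements directly from the explicit formula for $\MV$ given in Definition~\ref{defMVformal}, by tracking how the recipe behaves under formal completion of $\mathfrak X$ along a locally closed subscheme of $\mathfrak X_0$. Fix a resolution of singularities $\mathfrak h\colon\mathfrak Y\to\mathfrak X$ with $\mathfrak Y_s=\sum_{i\in S}N_i\mathfrak E_i$, available by Theorem~\ref{resolution-sing}. For any locally closed $V\subseteq\mathfrak X_0$ with formal completion $\mathfrak V\hookrightarrow\mathfrak X$, the fibre product $\mathfrak h^{-1}(\mathfrak V):=\mathfrak Y\times_{\mathfrak X}\mathfrak V\to\mathfrak V$ is a resolution of singularities of $\mathfrak V$, because formal completion is flat and preserves regularity; its special fibre is the tame strict normal crossings divisor with components $\mathfrak E_i\times_{\mathfrak X_0}V$ of the same multiplicities $N_i$. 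The Galois cover $\widetilde E_I^\circ\to E_I^\circ$ is defined locally by the single equation $z^{N_I}=u^{-1}$ coming from the unit $u$ in the decomposition $\varpi=u\prod_{i\in I}y_i^{N_i}$; these data pull back unchanged along $\mathfrak h^{-1}(\mathfrak V)\to\mathfrak Y$, so the cover attached to the resolution $\mathfrak h^{-1}(\mathfrak V)\to\mathfrak V$ is canonically $\widetilde E_I^\circ\times_{\mathfrak X_0}V$ together with its natural $\mu_{N_I}$-action over $V$.

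For part (i), apply this to each stratum $V=U_j$, $j\in Q$. Since $\{\mathfrak h_0^{-1}(U_j)\}_{j\in Q}$ is a locally closed stratification of $\mathfrak Y_0$, additivity in the equivariant Grothendieck ring yields
$$[\widetilde E_I^\circ\to\mathfrak X_0]=\sum_{j\in Q}(\iota_j)_!\bigl[\widetilde E_I^\circ\times_{\mathfrak X_0}U_j\to U_j\bigr]\quad\text{in}\ \mathscr M^{\mu_{N_I}}_{\mathfrak X_0},$$
where $\iota_j\colon U_j\hookrightarrow\mathfrak X_0$. Weighting by $(1-\L)^{|I|-1}$ and summing over $\emptyset\neq I\subseteq S$ gives
$$\MV(\mathfrak X;\widehat{K^s})=\sum_{j\in Q}(\iota_j)_!\MV(\mathfrak U_j;\widehat{K^s}),$$
as desired.

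For part (ii), derive the inclusion-exclusion identity from (i) by M\"obius inversion on the Boolean lattice of subsets of $Q$. Set $U_i:=(\mathfrak U_i)_0$ and, for $\emptyset\neq J\subseteq Q$, define
$$V_J:=\Bigl(\bigcap_{j\in J}U_j\Bigr)\setminus\Bigl(\bigcup_{i\notin J}U_i\Bigr),$$
a locally closed subscheme of $\mathfrak X_0$. The family $\{V_J\}_{\emptyset\neq J\subseteq Q}$ stratifies $\mathfrak X_0$, and for each $\emptyset\neq I\subseteq Q$ the subfamily $\{V_J:J\supseteq I\}$ stratifies $(\mathfrak U_I)_0$. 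Writing $\mathfrak V_J$ for the formal completion of $\mathfrak X$ along $V_J$ (canonically isomorphic to the completion of any $\mathfrak U_I$ along $V_J$ when $I\subseteq J$, since such a completion depends only on an open neighborhood of $V_J$ in $\mathfrak X_0$), applying (i) both to $\mathfrak X$ and to each $\mathfrak U_I$ together with the elementary identity $\sum_{\emptyset\neq I\subseteq J}(-1)^{|I|-1}=1$ yields
$$\sum_{\emptyset\neq I\subseteq Q}(-1)^{|I|-1}\MV(\mathfrak U_I;\widehat{K^s})=\sum_{\emptyset\neq J\subseteq Q}\Bigl(\sum_{\emptyset\neq I\subseteq J}(-1)^{|I|-1}\Bigr)\MV(\mathfrak V_J;\widehat{K^s})=\MV(\mathfrak X;\widehat{K^s}).$$

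The main technical obstacle is the compatibility between formal completion and the construction of the Galois covers $\widetilde E_I^\circ$: this amounts to the local affine verification that the unit $u$ in the decomposition $\varpi=u\prod_{i\in I}y_i^{N_i}$ and the covering equation $z^{N_I}=u^{-1}$ pull back to the corresponding data on $\mathfrak h^{-1}(\mathfrak V)$, so that the $\mu_{N_I}$-action is preserved on the nose. Once this has been checked, the remainder of the argument relies only on additivity in $\mathscr M^{\hat\mu}_{\mathfrak X_0}$ and standard inclusion-exclusion on a finite Boolean lattice.
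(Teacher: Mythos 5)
Your argument is correct, but it follows a genuinely different route from the paper. The paper's proof is a three-line reduction: it identifies $\mathfrak X$ with its resolution of singularities, invokes \cite[Prop.-Def. 7.38]{Ni2} to produce a finite open cover $\{\mathfrak V_j\}$ on which $\mathfrak V_j$-bounded gauge forms exist, and then applies Corollary \ref{int-additive} (itself a consequence of the compatibility of the integral with formal completion, Proposition \ref{base_change}) coefficient-wise to the volume Poincar\'e series $P(\mathfrak U_i\cap\mathfrak V_j,\omega;T)$ before passing to $\lim_{T\to\infty}$. You instead work directly with the closed formula of Definition \ref{defMVformal}: you complete a single resolution $\mathfrak h\colon\mathfrak Y\to\mathfrak X$ along the preimages of the strata, check that the resulting morphisms are again (tame) resolutions with the same multiplicities and with covers $\widetilde E_I^{\circ}\times_{\mathfrak X_0}V$, and conclude by scissor relations in $\mathscr M_{\mathfrak X_0}^{\hat\mu}$ plus M\"obius inversion on the Boolean lattice for (ii). Your route buys independence from gauge forms and from the rationality/limit formalism, at the cost of having to verify by hand the compatibility of the resolution data (regularity, properness, the strict normal crossings structure, and the units $u$ defining the covers) with formal completion — facts the paper outsources to \cite[Prop. 2.16, 2.21, 2.23]{Ni2} and to the integral machinery. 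Two points you should make explicit: first, your identity computes $\MV(\mathfrak U_j)$ using the \emph{particular} resolution obtained by completion, so you are implicitly invoking the well-definedness of the expression in Definition \ref{defMVformal} (its independence of the chosen resolution), which the paper only justifies via $\lim_{T\to\infty}P(\mathfrak X,\omega;T)$ on pieces admitting bounded gauge forms; second, after completion the formal divisors $\mathfrak E_i\times_{\mathfrak X_0}V$ may decompose into several irreducible components, so one should note that regrouping the refined strata $E_{I'}^{\circ}$ (which are disjoint, since a regular scheme's irreducible components are its connected components, and which carry the same multiplicities and hence the same $N_I$) reproduces the coarser sum — otherwise the formula as written with the index set $S$ is only formally the one from Definition \ref{defMVformal}. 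Neither point is a gap, but both deserve a sentence.
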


\begin{proof}
Since $\mathfrak X$ admits a resolution of singularities (cf. Theorem \ref{resolution-sing}), we can identify $\mathfrak X$ with its resolution of singularities. It implies from \cite[Prop.-Def. 7.38]{Ni2} that $\mathfrak X$ has a finite open covering $\{\mathfrak V_j\}_j$ such that each $\mathfrak V_j$ admits a $\mathfrak V_j$-bounded gauge form $\omega$ on $\mathfrak V_{j\eta}$. Thus we can apply Corollary \ref{int-additive} to the coefficients of $P(\mathfrak U_i\cap \mathfrak V_j,\omega;T)$ and deduce the proposition.	
\end{proof}

\subsection{Motivic zeta functions and motivic nearby cycles of formal power series}\label{Mot_series}
Consider the mixed formal power series $R$-algebra $R\{x\}[[y]]$, with $x=(x_1,\dots, x_m)$ and $y=(y_1,\dots,y_{m'})$. Let $d=m+m'$. Let $f$ be in $k\{x\}[[y]]$ and that $f(x,0)$ is non-constant. Let $\mathfrak X_f$ be the formal completion of $\Spf(k\{x\}[[y]])$ along $(f)$. It is a special formal $R$-scheme of pure relative dimension $d-1$, with structural morphism defined by $\varpi \mapsto f$. The reduction of $\mathfrak X_f$ is the algebraic $k$-variety $(\mathfrak X_f)_0=\Spec k[x]/(f(x,0))$. %The following lemma will be used many times in the rest of this section.

\begin{lemma}\label{modeloff}
Let $f$ be in $k\{x\}[[y]]$ such that $f(x,0)$ is non-constant. Then, there is an isomorphism of special formal $R$-schemes $\mathfrak X_f\cong \Spf\left( R\{x\}[[y]]/(f-\varpi)\right)$.
Consequently, $\mathfrak X_f$ is a generically smooth special formal $R$-scheme of pure relative dimension $d-1$.
\end{lemma}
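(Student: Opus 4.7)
The plan is to exhibit $\mathfrak X_f$ and $\Spf\!\bigl(R\{x\}[[y]]/(f-\varpi)\bigr)$ as the same topological $R$-algebra by comparing them at every finite level modulo $\varpi^{n+1}$, and then to read off the relative dimension and generic smoothness from the resulting hypersurface presentation.

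First, I would set up the continuous $R$-algebra homomorphism
$$\Phi\colon R\{x\}[[y]] \longrightarrow \widehat A, \quad x_i\mapsto x_i,\ y_j\mapsto y_j,\ \varpi\mapsto f,$$
where $\widehat A$ denotes the $(f)$-adic completion of $A:=k\{x\}[[y]]$ (the ring underlying $\mathfrak X_f$), viewed as an adic $R$-algebra via the structural map $\varpi\mapsto f$, with largest ideal of definition $(f,y)$. Since $f\in(f,y)$ is topologically nilpotent in $\widehat A$, the assignment $\varpi\mapsto f$ is continuous, so $\Phi$ is well defined. By construction $\Phi(f-\varpi)=0$, hence $\Phi$ factors through
$$\bar\Phi\colon R\{x\}[[y]]/(f-\varpi)\longrightarrow\widehat A.$$

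The core of the argument is to prove that $\bar\Phi$ is a topological $R$-isomorphism, and this is where the main work lies. I would reduce both sides modulo $\varpi^{n+1}$ for each $n\geq 0$. On the source,
$$\bigl(R\{x\}[[y]]/(f-\varpi)\bigr)\otimes_R R_n \;\cong\; R_n\{x\}[[y]]/(f-\varpi).$$
On the target, the identification $\varpi=f$ together with the standard identity $\widehat A/(f)^{n+1}\widehat A\cong A/(f)^{n+1}A$ for the $(f)$-adic completion of a Noetherian ring along a finitely generated ideal gives
$$\widehat A/\varpi^{n+1}\widehat A \;\cong\; A/f^{n+1}A.$$
I would then construct the reverse map $A/f^{n+1}A\to R_n\{x\}[[y]]/(f-\varpi)$ by $x_i\mapsto x_i$, $y_j\mapsto y_j$, which is well defined because $f^{n+1}=\varpi^{n+1}=0$ in the target; checking on generators shows this is a two-sided inverse to $\bar\Phi\bmod\varpi^{n+1}$. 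Passing to the projective limit in $n$ and using that both sides are $\varpi$-adically separated and complete yields that $\bar\Phi$ is a ring isomorphism. The main technical obstacle is the careful bookkeeping of the two topologies: the $(\varpi,y)$-adic topology of the quotient on the left and the $(f,y)$-adic topology of the completion on the right must be matched via $\varpi=f$, so that $\bar\Phi$ is a homeomorphism and not merely an isomorphism of abstract rings.

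For the consequence, the presentation realizes $\mathfrak X_f$ as the closed formal subscheme of the smooth special formal $R$-scheme $\Spf R\{x\}[[y]]$ of relative dimension $d=m+m'$, cut out by the single equation $f-\varpi=0$. The element $f$ is nonzero in the integral domain $A$ because $f(x,0)$ is non-constant, so $f-\varpi$ is not a zero-divisor in the flat $R$-algebra $R\{x\}[[y]]$; hence $\mathfrak X_f$ is flat over $R$ of pure relative dimension $d-1$. Generic smoothness of the rigid generic fiber $\mathfrak X_{f,\eta}$ then follows from the Jacobian criterion: the relative cotangent sheaf $\Omega^1_{\mathfrak X_{f,\eta}/K}$ is the quotient of the free sheaf on $dx_i,dy_j$ by the single relation $df=\sum(\partial f/\partial x_i)\,dx_i+\sum(\partial f/\partial y_j)\,dy_j=0$, so local freeness of rank $d-1$ holds away from the common zero locus of the partials; one verifies that no point $(x_0,y_0)\in\mathfrak X_{f,\eta}(\overline K)$ satisfies both $\nabla f(x_0,y_0)=0$ and $f(x_0,y_0)=\varpi$, since the reduction modulo the maximal ideal of $\overline R$ would force $(\bar x_0,0)$ to be a critical point of $f$ with critical value $0$, which together with the Taylor expansion of $f$ at $(\bar x_0,0)$ and the identity $v(f(x_0,y_0))=v(\varpi)=1$ produces the required obstruction.
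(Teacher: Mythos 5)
Your construction of the isomorphism coincides with the paper's: the paper likewise defines the homomorphism $R\{x\}[[y]]\to\widehat{k\{x\}[[y]]_{(f)}}$ by $\varpi\mapsto f$, $x\mapsto x$, $y\mapsto y$ and asserts that it descends to a canonical isomorphism modulo $(f-\varpi)$. Your level-by-level comparison modulo $\varpi^{n+1}$, using $\varpi^{n+1}\equiv f^{n+1}$ on both sides and the identification $\widehat A/f^{n+1}\widehat A\cong A/f^{n+1}A$, is a reasonable way of substantiating that assertion, and your flatness and pure-dimension observations from the hypersurface presentation are fine. Up to that point the proposal is correct and follows the same route as the paper (which, for its part, does not prove the ``consequently'' clause at all).

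The genuine gap is in your argument for generic smoothness. The obstruction you invoke does not exist. Take $f=x^2$ (so $m=1$, $m'=0$) and the point $x_0=\sqrt\varpi$ of $\mathfrak X_{f,\eta}$: its reduction $\bar x_0=0$ is a critical point of $f$ with critical value $0$, and $v(f(x_0))=v(\varpi)=1$, yet the Taylor expansion at $\bar x_0$ only yields $v(f(x_0))\ge 2\,v(x_0)=1$, which is perfectly consistent; there is no contradiction. The point is regular for a different reason, namely that $\nabla f(x_0)=2\sqrt\varpi\neq 0$, and nothing in your argument rules out a point where $\nabla f$ itself vanishes while $f=\varpi$. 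The argument that actually works (this is the smoothness of the analytic Milnor fiber, cf.\ Nicaise--Sebag) is that in characteristic zero the differential of $f$ restricted to each irreducible component of the reduced critical locus vanishes, so $f$ is constant there; hence the critical values lie in $\overline k\cup\{0\}$ and have valuation $0$ or $\infty$, never $1$, so $\varpi$ is a regular value. Some such characteristic-zero (or tameness) input is unavoidable: for $k$ perfect of characteristic $p$ and $f=x^p$ the hypothesis that $f(x,0)$ is non-constant holds, but $\mathfrak X_{f,\eta}=\Spm\left(K[x]/(x^p-\varpi)\right)$ is a non-reduced-after-base-change, hence non-smooth, rigid point, so the generic smoothness claim fails as literally stated. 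You should either restrict to characteristic zero at this step or replace the valuation-theoretic ``obstruction'' by the finiteness-of-critical-values argument.
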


\begin{proof}
Consider the $k$-algebra homomorphism $\varphi\colon R\{x\}[[y]] \to \widehat{R\{x\}[[y]]_{(f)}}$ given by $\varphi(\varpi)=f$, $\varphi(x)=x$ and $\varphi(y)=y$. This induces an isomorphism $\overline{\varphi}\colon R\{x\}[[y]]/(f-\varpi) \to \widehat{k\{x\}[[y]]_{(f)}}$, which makes the following diagram
\begin{displaymath}
\xymatrix{
	R\{x\}[[y]]/(f-\varpi)\ \ar[rr]^{\ \quad \overline{\varphi}}\ar@{<-}[dr]_{\varpi\ \! \mapsto\ \! [\varpi]}&&\ \widehat{k\{x\}[[y]]_{(f)}}\ar@{<-}[dl]^{\varpi\ \! \mapsto\ \! f}\\
	&\ R\ &
}
\end{displaymath}
commutes. The lemma is now proved.
\end{proof}

By \cite[Sect. 4]{AJP}, we can see that $\mathfrak X_f$ is a formal scheme of pseudo-finite type over $k$, the sheaf of continuous differential form $\Omega^i_{\mathfrak X_f/k}$ is coherent for any $i$, and that there exists a morphism of coherent $\mathcal O_{\mathfrak X_f}$-modules $d\varpi \wedge(.) \colon \Omega_{\mathfrak X_f/R}^{d-1}\to \Omega_{\mathfrak X_f/k}^d$ defined by taking the exterior product with the differential $df$. By \cite[Sect. 7]{deJ} and \cite[Lem. 2.5]{Ni2}, we have an exact functor $(\cdot)_{\rig}$ from the category of coherent $\mathcal O_{\mathfrak X_f}$-modules to the category of coherent $\mathcal O_{(\mathfrak X_f)_{\eta}}$-modules. Taking this functor we get a morphism of coherent $\mathcal O_{(\mathfrak X_f)_{\eta}}$-modules 
$$d\varpi \wedge(\cdot) \colon \Omega_{(\mathfrak X_f)_{\eta}/K}^{d-1}\to (\Omega_{\mathfrak X_f/k}^d)_{\rig}.$$
By \cite[Prop. 7.19]{Ni2}, the morphism $d\varpi \wedge(\cdot)$ is an isomorphism. If $\omega$ is a global section of $\Omega_{\mathfrak X_f/k}^d$, we denote as in \cite[Def. 7.21]{Ni2} by $\omega/df$ the inverse image of $\omega$ under $d\varpi \wedge(\cdot)$. We fix a gauge form $\omega$ on $\mathfrak X_f$. %, such as $\omega=dx_1\wedge\cdots\wedge dx_m\wedge dy_1\wedge\cdots\wedge dy_{m'}$.
Let $\mathfrak h\colon \mathfrak Y\to \mathfrak X_f$ be a tame resolution of singularities of $\mathfrak X_f$. Then, by \cite[Lem. 7.24]{Ni2}, $(\mathfrak h^*\omega)/d\varpi=\mathfrak h_{\eta}^*(\omega/df)$ in $\Omega_{\mathfrak Y_{\eta}/K}^{d-1}(\mathfrak Y_{\eta})$. Since $\mathfrak h$ is a tame resolution of singularities of $\mathfrak X_f$ and $\omega$ is a gauge form on $\mathfrak X_f$, $\mathfrak h^*\omega$ is a gauge form on $\mathfrak Y$. Since $\mathfrak Y$ is a regular flat special formal $R$-scheme, it follows from \cite[Cor. 7.23]{Ni2} that $(\mathfrak h^*\omega)/dt$ is a $\mathfrak Y$-bounded gauge form on $\mathfrak Y_{\eta}$. This together with Remark \ref{remark2} guarantees that the integral $\int_{\mathfrak X_f(n)}|(\omega/df)(n)|$ makes sense as a motivic quantity in $\mathscr M_{(\mathfrak X_f)_0}^{\mu_n}$ even though $\omega/df$ is possibly not a gauge form.

Assume that the data of $\mathfrak Y$ are given as in the setting before Theorem \ref{int_Xm} and that $K_{\mathfrak Y/\mathfrak X_f}=\sum_{i\in S}(\nu_i-1)\mathfrak E_i$. Using the same argument in the proof of \cite[Lem. 7.30]{Ni2} we get $\ord_{\mathfrak E_i}\mathfrak h_{\eta}^*(\omega/df)=\nu_i-N_i$ for all $i\in S$. Note that these numbers do not depend on $\omega$. Similarly as in the proof of Theorem \ref{int_Xm} we have the following result.

\begin{proposition}\label{mzf-fseries}
With the previous notation and hypotheses, if $n\in \mathbb N^*$ is prime to the characteristic exponent of $k$, the below identity holds in $\mathscr M_{(\mathfrak X_f)_0}^{\mu_n}$:
$$
\int_{\mathfrak X_f(n)}|(\omega/df)(n)|=\L^{n+1-d}\sum_{\emptyset\not=I\subseteq S}(\L-1)^{|I|-1}\big[\widetilde{E}_I^{\circ}\big]\left(\sum_{\begin{smallmatrix} k_i\geq 1, \sum_{i\in I}k_iN_i=n \end{smallmatrix}}\L^{\sum_{i\in I}k_i(N_i-\nu_i)}\right).
$$
If, in addition, $k$ has characteristic zero, then 
$$P(\mathfrak X_f,\omega/df;T)=\L^{-(d-1)} \frac{\L T}{1-\L T}\ast\sum_{\emptyset\not=I\subseteq S}(\L-1)^{|I|-1}\big[\widetilde{E}_I^{\circ}\big]\prod_{i\in I}\frac{\L^{-\nu_i}T^{N_i}} {1-\L^{-\nu_i}T^{N_i}},$$
where $\ast$ is the Hadamard product of formal series in $\mathscr M_{(\mathfrak X_f)_0}^{\hat\mu}[[T]]$ (cf. Section \ref{Hadamard}). Moreover, 
$$\MV(\mathfrak X_f;\widehat{K^s})=\sum_{\emptyset\not=I\subseteq S}(1-\L)^{|I|-1}\big[\widetilde{E}_I^{\circ}\big] \in \mathscr M_{(\mathfrak X_f)_0}^{\hat\mu}.$$
\end{proposition}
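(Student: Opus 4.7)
The strategy is to mirror the proof of Theorem \ref{int_Xm}, with the Gelfand--Leray form $\omega/df$ in place of a gauge form, invoking Remark \ref{remark2} so that $\int_{\mathfrak X_f(n)}|(\omega/df)(n)|$ is well defined in $\mathscr M_{(\mathfrak X_f)_0}^{\mu_n}$. For the integral formula, I would first note, via the analogue of Lemma \ref{lemma-preparation1}, that the base change $\mathfrak h(n)\colon \mathfrak Y(n)\to \mathfrak X_f(n)$ is an adic $\mu_n$-equivariant morphism of formal $R(n)$-schemes with $\mathfrak h(n)_\eta$ an isomorphism. Theorem \ref{changeofvariables} then yields
$$\int_{\mathfrak X_f(n)}|(\omega/df)(n)| = (\mathfrak h(n)_0)_!\int_{\mathfrak Y(n)}|\mathfrak h(n)_\eta^*(\omega/df)(n)|.$$

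Next, assuming $n$ is not $\mathfrak Y_s$-linear, I would invoke the $\mu_n$-equivariant N\'eron smoothening $\phi\colon Sm(\widetilde{\mathfrak Y(n)})\to\mathfrak Y(n)$ from Lemma \ref{lemma-preparation2} (applicable because $n$ is prime to the characteristic exponent of $k$ and $\mathfrak Y_s$ is a tame strict normal crossings divisor) and apply Theorem \ref{changeofvariables} a second time. The heart of the argument is the local order computation along each $\mu_{N_i}$-invariant connected component $C_i\cong \widetilde E_i^{\circ}$ of $Sm(\widetilde{\mathfrak Y(n)})_0$, for $N_i\mid n$. Using the explicit local description $y_0 = T\tau^{n/N_i}$ with $uT^{N_i}=1$ from the proof of Lemma \ref{lemma-preparation2}, together with the hypothesis $\ord_{\mathfrak E_i}\mathfrak h_\eta^*(\omega/df)=\nu_i-N_i$ and the Gelfand--Leray identity $df\wedge(\omega/df)=\omega$ (where $df = n\tau^{n-1}d\tau$ on $\mathfrak X_f(n)$), I would compute $\ord_{C_i}(\phi_\eta^*\mathfrak h(n)_\eta^*(\omega/df)(n))$ in terms of $\nu_i, N_i, n$. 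Proposition \ref{special-connected} then converts the integral over $Sm(\widetilde{\mathfrak Y(n)})$ into a sum of $\mu_{N_i}$-equivariant classes $[\widetilde E_i^{\circ}]$, and pushing forward by $(\mathfrak h(n)_0\circ\phi_0)_!$ yields the first identity in the non-$\mathfrak Y_s$-linear case. The general case is handled, as in the proof of Theorem \ref{int_Xm}, by the blowing-up extension argument from \cite[Lem. 7.5]{NS}.

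The second identity follows by summing the integral formula over $n\geq 1$; in characteristic zero a tame resolution of singularities exists by Theorem \ref{resolution-sing}, so the hypothesis of the first statement is in force for every $n$. The inner sum $\sum_{\sum k_iN_i=n}\L^{-\sum k_i\nu_i}$ is the $T^n$-coefficient of $\prod_{i\in I}\L^{-\nu_i}T^{N_i}/(1-\L^{-\nu_i}T^{N_i})$, while the factor $\L^n$ arising from the integral matches the $T^n$-coefficient of $\L T/(1-\L T)$; Lemma \ref{Lem2} guarantees that the Hadamard product is rational and produces the claimed closed form for $P(\mathfrak X_f,\omega/df;T)$. The third identity is an immediate application of Definition \ref{defMVformal} to the tame resolution $\mathfrak Y$ of $\mathfrak X_f$, with the numerical data $(\mathfrak E_i, N_i, \nu_i)$ inherited from $\mathfrak Y_s=\sum_{i\in S}N_i\mathfrak E_i$.

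The main technical obstacle is the Gelfand--Leray analogue of \cite[Prop.~7.11]{Ni2} required for the order computation above. One must track how $(\omega/df)(n)$ relates to the natural Gelfand--Leray $\omega(n)/d\tau$ on $\mathfrak X_f(n)$ with respect to the uniformizer $\tau$ of $R(n)$: the identity $(\omega/df)(n) = (n\tau^{n-1})^{-1}\omega(n)/d\tau$, derived from $df = n\tau^{n-1}d\tau$, introduces a twist of order $-(n-1)$ in $\tau$ that interacts nontrivially with the ramification factor $n/N_i$ along each component $C_i$. This delicate bookkeeping of the uniformizer change $\varpi\leftrightarrow\tau$, combined with the normalization $\widetilde{\mathfrak Y(n)}$ and the subsequent smoothening, is the principal source of subtlety in the proof.
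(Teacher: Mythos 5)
Your proposal is correct and follows essentially the same route as the paper, which simply runs the proof of Theorem \ref{int_Xm} with $\alpha_i=\ord_{\mathfrak E_i}\mathfrak h_{\eta}^*(\omega/df)=\nu_i-N_i$ and Remark \ref{remark2} for well-definedness, then reads off the Poincar\'e series via the Hadamard product and the motivic volume via Definition \ref{defMVformal}. The one point you single out as the ``main technical obstacle'' --- a Gelfand--Leray analogue of \cite[Prop. 7.11]{Ni2} with bookkeeping of the uniformizer twist $df=n\tau^{n-1}d\tau$ --- is not actually needed: since $\mathfrak h_{\eta}^*(\omega/df)=(\mathfrak h^*\omega)/d\varpi$ is already a $\mathfrak Y$-bounded gauge form on $\mathfrak Y_{\eta}$ (by \cite[Cor. 7.23]{Ni2}, as the paper records just before the proposition), \cite[Prop. 7.11]{Ni2} applies verbatim to give $\ord_{C_i}=(n/N_i)(\nu_i-N_i)$, so the order computation is word-for-word the one in Theorem \ref{int_Xm}.
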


Consider $d=m$, $f\in k[x]$, $R=k[[t]]$ and $K=k((t))$, with $k$ of characteristic zero and $t$ replacing $\varpi$. Assume $X_0=f^{-1}(0)\not=\emptyset$. Let $\mathscr L_n(\mathbb A_k^d)$ be the $n$-jet scheme of $\mathbb A_k^d$ (cf. \cite{DL1, DL2}). The contact loci and motivic zeta function of $f$ are defined as follows
\begin{align*}
\mathscr{X}_n(f)&=\big\{\gamma\in \mathscr{L}_n(\mathbb A_k^d) \mid f(\gamma)= t^n  \!\! \mod t^{n+1}\big\},\\
Z_f(T)&=\sum_{n\geq 1}\big[\mathscr{X}_n(f)\big]\L^{-nd}T^n\ \in \mathscr M_{X_0}^{\hat{\mu}}[[T]],
\end{align*}
where the $\mu_n$-action on the $\mathscr{X}_n(f)$ is given by $\xi\cdot\gamma(t)=\gamma(\xi t)$. By \cite{DL1}, $Z_f(T)$ is rational and we have the {\it motivic nearby cycles} $\mathscr S_f=-\lim_{T\to\infty}Z_f(T)$ of $f$. For a closed point $\x\in X_0$, one also consider the local version $\mathscr{X}_{n,\x}(f)$ and $Z_{f,\x}(T)\in\mathscr M_k^{\hat{\mu}}[[T]]$ (cf. \cite{DL1}). 

\begin{corollary}\label{comparezeta}
With the previous notation and hypotheses, we have
\begin{align*}
[\mathscr X_n(f)]&=\L^{(n+1)(d-1)}\int_{\mathfrak X_f(n)}|(\omega/df)(n)|,\\
[\mathscr X_{n,\x}(f)]&=\L^{(n+1)(d-1)}\int_{\widehat{(\mathfrak X_f)_{/\x}}(n)}|(\omega/df)(n)|.
\end{align*}
Consequently, $Z_f(T)=U(T) \ast \L^{d-1} P(\mathfrak X_f,\omega/df;T)$, $Z_{f,\x}(T)=U(T)\ast \L^{d-1} P(\widehat{(\mathfrak X_f)_{/\x}},\omega/df;T)$, where $U(T)=\sum_{n\geq 1}(\L^{-1}T)^n$, from which $\mathscr S_f=\MV(\mathfrak X_f;\widehat{K^s})$ and $\mathscr S_{f,\x}=\MV(\widehat{(\mathfrak X_f)_{/\x}};\widehat{K^s})$.
\end{corollary}

\begin{proof}
Let $h\colon Y\to \mathbb A_k^d$ be an embedded resolution of singularities of $X_0$ with strict normal crossing divisor $Y_s=\sum_{i\in S}N_iE_i$. By \cite[Lem. 2.4]{NS}, the induced morphism $\widehat h\colon \widehat Y\to \mathfrak X_f$ is a resolution of singularities, where $\widehat Y$ is the formal completion of $Y$ along $(f\circ h)$ and $(\widehat Y)_s=Y_s$. Assume that $K_{Y/\mathbb A_k^d}=\sum_{i\in S}(\nu_i-1)E_i$. By \cite[Lem. 9.6]{NS}, $\ord_{E_i}\widehat h^*(\omega/df)=\nu_i-N_i$. By Remark \ref{remark2}, Theorem \ref{int_Xm} and \cite[Thm. 2.4]{DL5} we complete the proof.
\end{proof}

\begin{definition}\label{def-formal}
Let $k$ be a field of characteristic zero. Let $f$ be in $k\{x\}[[y]]$ such that $f(x,0)$ is non-constant. Let $\x$ be a closed point in $(\mathfrak X_f)_0$. The {\it motivc zeta function} of $f$ and the {\it local motivic zeta function} of $f$ at $\x$ are defined as $Z_f(T)=\L^{d-1}P(\mathfrak X_f,\omega/df;T)$ and $Z_{f,\x}(T)=\L^{d-1}P(\widehat{(\mathfrak X_f)_{/\x}},\omega/df;T)$. 
The {\it motivic nearby cycles } of $f$ and the {\it motivic Milnor fiber} of $f$ at $\x$ are defined as $\mathscr S_f=\L^{d-1}\MV(\mathfrak X_f;\widehat{K^s})$ and $\mathscr S_{f,\x}=\L^{d-1}\MV(\widehat{(\mathfrak X_f)_{/\x}};\widehat{K^s})$. 
\end{definition}

%%%%%%%%%%%%%%%%%%%%%%%%%%%%%%%%%%%%%
%%%%%%%%%%%%%%%%%%%%%%%%%%%%%%%%%%%%%
%%%%%%%%%%%%%%%%%%%%%%%%%%%%%%%%%%%%%

\subsection{Two conjectures}\label{last-ss}
%\subsection{Motivic Milnor fiber of formal power series}\label{ss51}
Let $k$ be a field of characteristic zero, and let $f$ be in $k[[x_1,\dots,x_d]]$ such that $f(0)=0$. Let $R=k[[t]]$ and $K=k((t))$. Consider the special formal $R$-scheme $\mathfrak X_f=\Spf(R[[x_1,\dots,x_d]]/(f-t))$ of relative dimension $d-1$ (cf. Lemma \ref{modeloff}). Its reduction is $(\mathfrak X_f)_0=\Spec k$. As in Definition \ref{def-formal}, using a Gelfand-Leray form, we have the concept of motivic zeta function $Z_f(T)=\L^{d-1}P(\mathfrak X_f,\omega/df;T)\in \mathscr M_k^{\hat\mu}[[T]]$ and that of motivic Milnor fiber $\mathscr S_f=\L^{d-1}\MV(\mathfrak X_f;\widehat{K^s})\in \mathscr M_k^{\hat\mu}$ of the formal power series $f$. For $n\in \mathbb N^*$, consider the algebraic $k$-variety
$$\mathscr X_{n,0}(f)=\big\{\gamma\in \mathscr L_n(\mathbb A_k^d) \mid f_n(\gamma)=t^n\!\! \mod t^{n+1},\ \gamma(0)=0\big\},$$ 
which admits the good $\mu_n$-action given by $\xi\cdot \gamma(t)=\gamma(\xi t)$. Here, $f_n$ denotes the sum of all the degree $k$ homogeneous parts of $f$ over $1\leq k\leq n$. Note that, although $f$ is a formal power series, $\mathscr X_{n,0}(f)$ is really an algebraic $k$-variety.  

%It is not simple to prove that the series
%$$Z(T)=\sum_{n\geq 1}[\mathscr X_{n,0}(f_n)]\L^{-nd}T^n$$
%is rational because one can not find a common log resolution for all $f_n$'s. As a solution, we recommend the following conjecture. 

\begin{conjecture}\label{conj1}
Let $f$ be a formal power series in $k[[x_1,\dots,x_d]]$ such that $f(0)=0$. Put $\omega=dx_1\wedge \cdots\wedge dx_d$. Then the following identity holds in $\mathscr M_k^{\mu_n}$:
\begin{align*}
[\mathscr X_{n,0}(f)]=\L^{(n+1)(d-1)}\int_{\mathfrak X_f(n)}|(\omega/df)(n)|.
\end{align*}
\end{conjecture}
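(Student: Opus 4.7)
The plan is to reduce the conjecture to the local polynomial case (Corollary~\ref{comparezeta}) by approximating $f$ with a sufficiently high-order polynomial truncation $f_N$, and then invoking a finite-determinacy property of the motivic $\mu_n$-integral on $\mathfrak X_f(n)$.

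Observe first that for any $N\geq n$ and any $n$-jet $\gamma\in\mathscr L_n(\mathbb A_k^d)$ with $\gamma(0)=O$, the components $\gamma_i$ lie in $tR_n$, so $\gamma^{\alpha}\in t^{n+1}R_n=0$ whenever $|\alpha|\geq n+1$; thus $f(\gamma)\bmod t^{n+1}$ depends only on the terms of $f$ of total degree $\leq n$, namely on $f_n$. Consequently
$$\mathscr X_{n,O}(f_N)=\mathscr X_{n,O}(f_n)\quad\text{as }\mu_n\text{-equivariant }k\text{-varieties.}$$
Since $f_N$ is a polynomial in $k[x]$ with $f_N(O)=0$, the local form of Corollary~\ref{comparezeta} applied at $\x=O$ yields
$$[\mathscr X_{n,O}(f_N)]=\L^{(n+1)(d-1)}\int_{\widehat{(\mathfrak X_{f_N})_{/O}}(n)}|(\omega/df_N)(n)|.$$
A variant of Lemma~\ref{modeloff} identifies $\widehat{(\mathfrak X_{f_N})_{/O}}$ with $\Spf(R[[x]]/(f_N-t))$, while Lemma~\ref{modeloff} itself (adapted to $m=0$) identifies $\mathfrak X_f$ with $\Spf(R[[x]]/(f-t))$. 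The conjecture therefore reduces to the approximation identity
$$\int_{\mathfrak X_f(n)}|(\omega/df)(n)|=\int_{\Spf(R[[x]]/(f_N-t))(n)}|(\omega/df_N)(n)|\quad\text{in }\mathscr M_k^{\mu_n}$$
for all $N$ sufficiently large.

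To prove the approximation identity, the strategy is to use Temkin's resolution of singularities (Theorem~\ref{resolution-sing}), available because $\mathrm{char}(k)=0$. Choose a tame resolution $\mathfrak h\colon\mathfrak Y\to\mathfrak X_f$ and apply Proposition~\ref{mzf-fseries} (adapted to the mixed-variable setting with $m=0$) to express $\int_{\mathfrak X_f(n)}|(\omega/df)(n)|$ as an explicit finite sum over the numerical data $(N_i,\nu_i)$ of the prime components of $\mathfrak Y_s$ and the classes $[\widetilde E_I^\circ]\in\mathscr M_k^{\mu_n}$ of the associated Galois covers. Because $\mathfrak h$ is obtained by a finite sequence of admissible blowups whose centers are determined by a finite jet of $f$ at $O$, one can fix an integer $N_0=N_0(f,n)$ such that for every $N\geq N_0$ the \emph{same} sequence of blowups provides a tame resolution of $\Spf(R[[x]]/(f_N-t))$ with identical combinatorial data $(N_i,\nu_i,[\widetilde E_I^\circ])$; Proposition~\ref{mzf-fseries} then outputs the same value for the corresponding integral. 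Combining the three ingredients, for $N\geq\max(n,N_0)$ one obtains
\begin{align*}
[\mathscr X_{n,O}(f_n)]&=[\mathscr X_{n,O}(f_N)]=\L^{(n+1)(d-1)}\int_{\Spf(R[[x]]/(f_N-t))(n)}|(\omega/df_N)(n)|\\
&=\L^{(n+1)(d-1)}\int_{\mathfrak X_f(n)}|(\omega/df)(n)|,
\end{align*}
which is the claimed identity in $\mathscr M_k^{\mu_n}$.

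The main obstacle is the finite-determinacy step: one must justify that some tame resolution of $\mathfrak X_f$ is determined by finitely many terms of $f$, and that for $N$ large enough the same resolution works for $f_N$ with matching invariants $(N_i,\nu_i,[\widetilde E_I^\circ])$. When $f$ has an isolated singularity at $O$ this follows from Tougeron's classical right-equivalence theorem, but in general (especially for non-isolated singularities) it requires a careful analysis of a specific canonical resolution algorithm—for instance Temkin's—to show that every intermediate blowup center depends only on a bounded jet of $f$ at the preimage of $O$. An alternative, more direct route is to compute the $\mu_n$-dilatation $\pi\colon\mathfrak U\to\mathfrak X_f(n)$ of Proposition~\ref{G-dilatation} explicitly in coordinates (via $x_i=\tau y_i$ and division by $\tau^{\ord(f)}$) and verify that the resulting stft formal $R(n)$-scheme, together with the stable Greenberg cylinders that define the integral, involves $f$ only through finitely many of its monomial coefficients; this would allow the approximation identity to be established without passing through a resolution.
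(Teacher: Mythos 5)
This statement is posed in the paper as a \emph{conjecture}, not a theorem: the authors explicitly say that one cannot find a common log resolution for all the truncations $f_n$ and offer Conjecture \ref{conj1} precisely because they do not know how to prove it. So there is no proof in the paper to compare against, and the question is whether your proposal actually closes the gap. It does not.

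Your reduction is sound as far as it goes: the observation that $\mathscr X_{n,O}(f_N)=\mathscr X_{n,O}(f_n)$ for $N\geq n$ is correct (jets through the origin kill all monomials of degree $\geq n+1$ modulo $t^{n+1}$), and invoking the local case of Corollary \ref{comparezeta} for the polynomial $f_N$ is legitimate. This correctly reduces the conjecture to the approximation identity
$\int_{\mathfrak X_f(n)}|(\omega/df)(n)|=\int_{\Spf(R[[x]]/(f_N-t))(n)}|(\omega/df_N)(n)|$ for $N\gg 0$. But that identity is exactly the hard content of the conjecture, and the step you offer for it is not a proof. First, finite determinacy up to formal right-equivalence fails in general for non-isolated singularities, so there is no a priori reason that $f$ and $f_N$ admit resolutions with matching data for any $N$; Tougeron's theorem only covers the isolated case. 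Second, the assertion that the centers of a canonical (e.g.\ Temkin) resolution of $\Spf(R[[x]]/(f-t))$ depend only on a bounded jet of $f$ is stated without justification, and making it precise is a nontrivial result about the resolution algorithm, not a routine verification. Third, even granting matching numerical data $(N_i,\nu_i)$, the formula of Proposition \ref{mzf-fseries} involves the actual classes $[\widetilde E_I^{\circ}]$ in $\mathscr M_{k}^{\mu_n}$, i.e.\ the strata of the exceptional locus together with their $\mu_{N_I}$-covers; one must show these agree as equivariant varieties, which is strictly stronger than matching multiplicities and discrepancies. The alternative ``direct computation of the dilatation'' route suffers from the same issue: the stable level of the relevant Greenberg cylinders is controlled by the singularities of the stft model, and bounding it in terms of a finite jet of $f$ is again the essential difficulty. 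In short, your proposal is a reasonable reduction and research plan, but the decisive step --- the finite determinacy of the motivic integral in the coefficients of $f$ --- remains unproven, which is why the statement is a conjecture in the first place.
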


We consider the case $k=\mathbb{C}$. Let $f\in \mathbb C\{x_1,\dots,x_d\}$ be a complex analytic function vanishing at $O\in \mathbb C^d$. Using Denef-Loeser's theory of motivic integration \cite{DL1, DL2}, it seems impossible to define {\it directly} $\mathscr X_{n,O}(f)$ except $f$ is a polynomial, but we can define $\mathscr X_{n,O}(f)$ to be $\mathscr X_{n,O}(f_n)$ with $f_n$ understood as above. On the other hand, the rationality of the series $Z(T)=\sum_{n\geq 1}[\mathscr X_{n,O}(f_n)]\L^{-nd}T^n$ in $\mathscr M_{\mathbb C}^{\hat\mu}[[T]]$ is a big problem because of the lack of existence of a common log resolution for all hypersurfaces defined by the vanishing of $f_n$. % We even need to distinguish a resolution of singularities of an algebraic variety from that of an analytic complex manifold. 
Hence, if Conjecture \ref{conj1} is not proved yet, we can not define the motivic Milnor fiber of a complex analytic function $f$ as $-\lim_{T\to \infty}Z(T)$. We define {\it the motivic Milnor fiber} $\mathscr S_{f,O}$ of the complex analytic function germ $(f,O)$ to be the motivic Milnor fiber of a Taylor expansion of $f$ at $O$ as in Definition \ref{def-formal}. %In particular, it is important to require a complete explanation for the statement in Remark 1.17 of \cite{GG}.
 Consider the topological Milnor fiber $F_{f,O}$ of $f$ at the origin. We use the same symbol $\chi_c$ for the topological Euler characteristic with compect support and for the Euler characteristic with compact support of complex constructible sets.

%In what follows, we use the motivic Milnor fiber $\mathscr S_{f,O}$ for the complex analytic function germ $(f,O)$ defined in Definition \ref{def-formal}.

\begin{conjecture}\label{conj2}
Let $f$ be a complex analytic function in $d$ variables which vanishes at $O$. Then the following equality holds.
$$\chi_c(\mathscr S_{f,O})=\chi_c(F_{f,O}).$$ 
\end{conjecture}

%This conjecture can provide further applications to problems set up in the formal geometry context.

%%%%%%%%%%%%%%%%%%%%%%%%%%%%%%%%%%%

%\begin{ack}
%This article has been written during many visits of the authors to Vietnam Institute for Advanced Studies in Mathematics between January 2018 and June 2022. The authors thank sincerely the institute for their hospitality and valuable supports.
%\end{ack}

%%%%%%%%%%%%%%%%%%%%%%%%%%%%%%%%%%%%%%%%%%%%%%%%%%%%%%%%%%%
%%%%%%%%%%%%%%%%%%%%%%%%%%%%%%%%%%%%%%%%%%%%%%%%%%%%%%%%%%%
%%%%%%%%%%%%%%%%%%%%%%%%%%%%%%%%%%%%%%%%%%%%%%%%%%%%%%%%%%%
%%%%%%%%%%%%%%%%%%%%%%%%%%%%%%%%%%%%%%%%%%%%%%%%%%%%%%%%%%%


\begin{thebibliography}{999}
\bibitem{AJP}	
{L. Alonso Tarr\'io, A. Jerem\'ias L\'opez, M. P\'erez Rodr\'iguez}, {\it Infinitesimal lifting and Jacobi criterion for smoothness on formal schemes}, {Commun. Algebra} {\bf 35} (2007), 1341--1367.	

\bibitem{Ba}
{V. Batyrev}, {\it Birational Calabi-Yau $n$-folds have equal Betti numbers}, {New Trends in Algebraic geometry}, {Euroconference on Algebraic Geometry} (Warwick 1996), {London Math. Soc. Lecture Note ser. 264}, {K. Hulek et al Ed.}, CUP (1999), 1--11.

\bibitem{Ber96}
{V. Berkovich}, {\it Vanishing cycles for formal schemes II}, {Invent. Math.} {\bf 125} no. 2 (1996), 367--390.	

\bibitem{BLR90}
{S. Bosch, W. L\"utkebohmert, M. Raynaud},  N\'eron Models, Volume 21 of Ergebnisse der mathematik und ihrer grenzgebiete (3) [Results in Mathematics and Related Areas (3)]. Berlin: Springer-Verlag (1990).

\bibitem{BLR95}
{S. Bosch, W. L\"utkebohmert, M. Raynaud}, {\it Formal and rigid geometry. III. The relative maximal principle}, {Math. Ann.} {\bf 301} (1995), 1--29.

\bibitem{BS95}
{Bosch, S., Schl\"oter, K.}  {\it N\'eron models in the setting of formal and rigid geometry}, Math. Ann. {\bf 301} (1995), 339--362.
	
\bibitem{Berthelot}
{P. Berthelot}, {\it Cohomologie rigide et cohomologie rigide \`a supports propres}, {Pr\'epublication}, {Inst. Math. de Rennes} 1996.

\bibitem{CR}
{J. Cely, M. Raibaut}, {\it On the commutativity of pull-back and push-forward functors on motivic constructible functions}, {J. Symb. Log.} {\bf 84} (2019), 1252--1278.

\bibitem{CNS}
{A. Chambert-Loir, J. Nicaise, J. Sebag}, {\it Motivic integration}, {Progress in Mathematics, Birkhaüser} {\bf 325}, 2018.

\bibitem{CGH}
{R. Cluckers, J. Gordon, I. Halupczok}, {\it Motivic functions, integrability, and applications to harmonic analysis on p-adic groups}, Electron. Res. Announc. Math. Sci. {\bf 21} (2014), 137--152.

\bibitem{CHL}
{R. Cluckers, T. Hales, F. Loeser}, {\it Transfer principle for the fundamental lemma}, On the stabilization of the trace formula, 309--347, Stab. Trace Formula Shimura Var. Arith. Appl., 1, Int. Press, Somerville, MA, 2011.

\bibitem{CL05}
{R. Cluckers, F. Loeser}, {\it Ax-Kochen-Er\v{s}ov theorems for $p$-adic integrals and motivic integration}, Geometric methods in algebra and number theory, 109--137, Progr. Math., 235, Birkhäuser Boston, Boston, MA, 2005.

\bibitem{CL08}
{R. Cluckers, F. Loeser}, {\it Constructible motivic functions and motivic integration}, {Invent. Math.} {\bf 173} (2008), no. 1, 23--121. 

\bibitem{CL10}
{R. Cluckers, F. Loeser}, {\it Constructible exponential functions, motivic Fourier transform and transfer principle}, {Ann. of Math. (2)} {\bf 171} (2010), 1011--1065.

\bibitem{deJ}
{A. J. de Jong}, {\it Crystalline Dieudonn\'e module theory via formal and rigid geometry}, {Inst. Hautes \'Etudes Sci. Publ. Math.} {\bf 82} (1996), 5--96.

\bibitem{DL1}
{J. Denef and F. Loeser}, {\it Motivic Igusa zeta functions}, {J. Algebraic Geom.} {\bf 7} (1998), 505--537.
	
\bibitem{DL2}
{J. Denef and F. Loeser}, {\it Germs of arcs on singular algebraic varieties and motivic integration}, {Invent. Math.} {\bf 135} (1999), 201--232.

\bibitem{DL3}
{J. Denef, F. Loeser}, {\it Definable sets, Motives, and $p$-adic integrals}, {J. Amer. Math. Soc.} {\bf 14} (2001), 429--469.
	
%\bibitem{DL4}
%{J. Denef and F. Loeser}, {\it ``Geometry on arc spaces of algebraic varieties''} in European congress of Mathematics, {Vol. 1 (Barcelona, 2000)}, Progr. Math. {\bf 201}, Birkha\"{u}ser, Basel, 2001, 327--348.
	
\bibitem{DL5}
{J. Denef, F. Loeser}, {\it Lefschetz numbers of iterates of the monodromy and truncated arcs}, {Topology} {\bf 41} (2002), no. 5, 1031--1040.

%\bibitem{GG}
%{P.D. Gonz\'alez P\'erez, M. Gonz\'alez Villa}, {\it Motivic Milnor fiber of a quasi-ordinary hypersurface}, {J. Reine Angew. Math.} {\bf 687} (2014), 159--205. 

\bibitem{GY}
{J. Gordon, Y. Yaffe}, {\it An overview of arithmetic motivic integration}, Ottawa lectures on admissible representations of reductive $p$-adic groups, 113--149, Fields Inst. Monogr., 26, Amer. Math. Soc., Providence, RI, 2009.
	
\bibitem{Gr}
{M. J. Greenberg}, {\it Schemata over local rings}, {Ann. of Math.} {\bf 73} (1961), 624--648.

\bibitem{Gro63} 
{A. Grothendieck}, {\it Rev\^etements \'etales et groupe fondamental}, Fasc. I: Expos\'es 1 \`a 5, volume 1960/61 of S\'eminaire de G\'eom\'etrie Alg\'ebrique, Institut des Hautes \'Etudes Scientifiques, Paris, 1963.

\bibitem{Hartmann}
{A. Hartmann}, {\it Equivariant motivic integration on formal schemes and the motivic zeta function}, {J. Comm. Algebra} {\bf 47} (2019), No. 4, 1423--1463.

\bibitem{HK}
{E. Hrushovski, D. Kazhdan}, {\it Integration in valued fields}, {in Algebraic and Number Theory}, {Progress in Mathematics} {\bf 253}, 261--405 (2006), Birkh \"{a}user.

\bibitem{HL16}
{E. Hrushovski, F. Loeser}, {\it Non-archimedean tame topology and stably dominated types}, {Annals of Mathematics Studies} {\bf 192}, Princeton University Press, Princeton, NJ, 2016.
	
%\bibitem{LeDT}
%{D. T. L\^e}, {\it Some remarks on relative monodromy}, {Real and Complex Singularities} (Oslo 1976), Sijthoff and Noorhoff, Amsterdam (1977), 397-403.
	
\bibitem{LN}
Q.T. L\^e, H.D. Nguyen, {\it Equivariant motivic integration and proof of the integral identity conjecture for regular functions}, {Math. Ann.} {\bf 376} (2020), 1195--1223.
	
\bibitem{LS}
{F. Loeser and J. Sebag}, {\it Motivic integration on smooth rigid varieties and invariants of degenerations}, {Duke Math. J.} {\bf 119} (2003), no. 2, 315--344.
	
\bibitem{Loo}
{E. Looijenga}, {\it Motivic measures}, {Ast\'erisque} {\bf 276} (2002), 267--297, S\'eminaire Bourbaki 1999/2000, no. 874.

%\bibitem{M}
%{J. Milnor}, {\it Singular Points of Complex Hypersurface}, {Ann. of Math. Stud.} {\bf 61} {Princeton Univ. Press}, {Princeton}, 1968.

\bibitem{Ni2}
{J. Nicaise}, {\it A trace formula for rigid varieties, and motivic Weil generating series for formal schemes}, {Math. Ann.} {\bf 343} (2009), 285--349.
	
\bibitem{NS1}
{J. Nicaise and J. Sebag}, {\it Motivic Serre invariants of curves}, {Manuscripta Math.} {\bf 123} (2007), no. 2, 105--132.

\bibitem{NS}
{J. Nicaise, J. Sebag}, {\it Motivic Serre invariants, ramification, and the analytic Milnor fiber}, {Invent. Math.} {\bf 168} (2007), no. 1, 133--173.
	
\bibitem{NS2}
{J. Nicaise, J. Sebag}, {\it Motivic Serre invariants and Weil restriction}, {J. Algebra.} {\bf 319} (2008), 1585--1610.

\bibitem{NS4}
{J. Nicaise, J. Sebag}, {\it The Grothendieck ring of varieties}. In: {R. Cluckers, J. Nicaise, J. Sebag, editors}, {\it Motivic integration and its interactions with model theory and non-Archimedean geometry}, Volume 1, London Math. Soc. Lecture Note Ser., 383, Cambridge Univ. Press, 145--188, 2011.

\bibitem{NS3}
{J. Nicaise, J. Sebag}, {\it Motivic invariants of rigid varieties, and applications to complex singularities}. In: {R. Cluckers, J. Nicaise, J. Sebag, editors}, {\it Motivic Integration and Its Interactions with Model Theory and Non-Archimedean Geometry}, Volume 1, London Mathematical Society Lecture Note Series {\bf 383}, Cambridge University Press, 244--304, 2011.
	
\bibitem{Se}
{J. Sebag}, {\it Int\'egration motivique sur les sch\'emas formels}, {Bull. Soc. Math. France} {\bf 132} (2004), no. 1, 1--54, S\'eminaire Bourbaki 1999/2000, no. 874.

\bibitem{Stack} Stacks Project Authors, {\it Stacks Project}.

\bibitem{St}
{J. H. M. Steenbrink}, {\it Mixed Hodge structure on the vanishing cohomology}, {Real and complex singularities}, Oslo 1976, Sijthoff and Noordhoff, Alphen aan den Rijn, 1977, 525-563.
	
\bibitem{Tem}
{M. Temkin}, {\it Desingularization of quasi-excellent schemes in characteristic zero}, {Adv. Math.} {\bf 219} (2008), no. 2, 488--522.

\bibitem{Tem18}
{M. Temkin}, {\it Functorial desingularization over $\mathbb Q$: boundaries and the embedded case}, {Israel J. Math.} {\bf 224} (2018), no. 1, 455--504. 
	
\bibitem{Tate}
{J. Tate}, {\it Rigid analytic geometry}, {Invent. Math.} {\bf 12} (1971), 257--289.
\end{thebibliography}
\end{document}